\author{A.A. Vasil'eva\footnote{Lomonosov Moscow State University, Moscow Center for Fundamental and Applied Mathematics}}
\title{Estimates for the Kolmogorov widths of weighted Sobolev classes with conditions on the $0$‐th and the greatest derivatives\footnote{This research was carried out with the financial support of the Russian Science Foundation (grant no. 22-21-00204).}}
\date{}
\begin{document}

\maketitle

\newenvironment{Biblio}{%
                  \renewcommand{\refname}{\footnotesize REFERENCES}%
                  }

\def\inff{\mathop{\smash\inf\vphantom\sup}}
\renewcommand{\le}{\leqslant}
\renewcommand{\ge}{\geqslant}
\newcommand{\sgn}{\mathrm {sgn}\,}
\newcommand{\inter}{\mathrm {int}\,}
\newcommand{\dist}{\mathrm {dist}}
\newcommand{\supp}{\mathrm {supp}\,}
\newcommand{\R}{\mathbb{R}}
\newcommand{\Z}{\mathbb{Z}}
\newcommand{\N}{\mathbb{N}}
\newcommand{\Q}{\mathbb{Q}}
\theoremstyle{plain}
\newtheorem{Trm}{Theorem}
\newtheorem{trma}{Theorem}

\newtheorem{Def}{Definition}
\newtheorem{Cor}{Corollary}
\newtheorem{Lem}{Lemma}
\newtheorem{Rem}{Remark}
\newtheorem{Sta}{Proposition}
\newtheorem{Sup}{Assumption}
\newtheorem{Supp}{Assumption}
\newtheorem{Not}{Notation}
\newtheorem{Exa}{Example}
\renewcommand{\proofname}{\bf Proof}
\renewcommand{\thetrma}{\Alph{trma}}
\renewcommand{\theSupp}{\Alph{Supp}}

\begin{abstract}
In this article, we obtain the order estimates for the Kolmogorov widths of sets with conditions on the norm in the weighted Sobolev space $W^r_{p_1}$ and in the weighted space $L_{p_0}$.
\end{abstract}

\section{Introduction}

In this paper, we continue the investigation from \cite{vas_inters} about estimating the Kolmogorov widths of the weighted Sobolev classes with conditions on the derivatives of order 0 and the greatest order. Such classes were studied by Oinarov \cite{r_oinarov}, Stepanov and Ushakova \cite{st_ush} (in these papers, sharp two‐sided estimates for the norm of the embedding operator of the weighted Sobolev class on an interval and half‐axis with conditions on 0‐th and the first derivatives were obtained), Triebel \cite{triebel}, Lizorkin and Otelbaev \cite{lo1, lo2, lo3}, Mynbaev and Otelbaev \cite{myn_otel}, Aitenova and Kusainova \cite{ait_kus1, ait_kus2} (in these papers, the problem on estimating the Kolmogorov and the linear widths of weighted Sobolev classes on a domain with conditions on the greatest and 0‐th derivatives in the weighted $L_q$‐space was studied; the conditions on the both derivatives were given in weighted $L_p$‐spaces). In addition, Boykov \cite{boy_1} studied the problem on estimating the Kolmogorov and linear widths of weighted Sobolev classes on a cube with conditions on the derivatives of order from 0 to $r$; the conditions on lower derivatives are given in weighted $L_\infty$‐spaces, and the conditions on higher derivatives are given in weighted $L_p$‐spaces. Boykov and Ryazantsev \cite{boy_ryaz} studied the problem on estimating the Kolmogorov widths of the infinite intersection of weighted Sobolev classes on a cube (the conditions on the derivatives are given in weighted $L_\infty$‐spaces). For details, see \cite{vas_inters}.

First we recall the definition of the Kolmogorov widths. Let $X$ be a normed space, $C\subset X$, and let $n\in \Z_+$. The Kolmogorov widths of the set $C$ in $X$ are defined by
$$
d_n(C, \, X) = \inf _{L\in {\cal L}_n(X)} \sup _{x\in C} \inf _{y\in L}\|x‐y\|,
$$
where ${\cal L}_n(X)$ is the family of linear subspaces in $X$ of dimension at most $n$. For details, see \cite{kniga_pinkusa} and \cite{itogi_nt}.

In \cite{vas_inters}, the problem on estimating the widths $d_n(M, \, L_{q,v}(\Omega))$ was studied, where 
\begin{align}
\label{m_def}
M = \left\{ f:\Omega \rightarrow \R, \;\; \left\| \frac{\nabla ^r f}{g}\right\|_{L_{p_1}(\Omega)}\le 1, \;\; \|wf\|_{L_{p_0}(\Omega)}\le 1\right\},
\end{align}
$$
L_{q,v}(\Omega) =\{f:\Omega \rightarrow \R, \; \|f\|_{L_{q,v}(\Omega)}:= \|vf\|_{L_q(\Omega)}<\infty\}.
$$
In particular, there was considered the following example: $\Omega$ is a John domain, the weights have the form
\begin{align}
\label{gwv}
g(x) = {\rm dist}^{‐\beta}(x, \, \Gamma), \;\; w(x) = {\rm dist}^{‐\sigma}(x, \, \Gamma), \;\; v(x) = {\rm dist}^{‐\lambda}(x, \, \Gamma),
\end{align}
where $\Gamma \subset \partial \Omega$ is an $h$‐set,
\begin{align}
\label{h_theta}
h(t)=t^\theta, \quad 0\le \theta<d.
\end{align}
The definition of a John domain and of an $h$‐set will be given in \S 3. Here we notice that John domains have no zero angles, and that the Sobolev embedding condition for such domains is the same as for a cube \cite{resh1, resh2}. The examples of John domains are domains with Lipschitz boundary and Koch's snowflake. The examples of $h$‐sets are $k$‐dimensional Lipschitz manifolds (with $h(t)=t^k$), Koch's curve, some Cantor‐type sets.

In \cite{vas_inters} order estimates for $d_n(M, \, L_{q,v}(\Omega))$ were obtained, where the weight functions are given by (\ref{gwv}), (\ref{h_theta}), and the additional conditions on the parameters hold:
\begin{align}
\label{rdqpg0beta}
\begin{array}{c}
r +\frac{d}{q}‐\frac{d}{p_1}>0, \;\; r +\frac{d}{p_0} ‐\frac{d}{p_1}>0,
\\
\beta + \sigma ‐ r ‐\frac{d}{p_0} + \frac{d}{p_1} >0, \;\;
\beta + \sigma ‐ r ‐\frac{d‐\theta}{p_0} + \frac{d‐\theta}{p_1} >0.
\end{array}
\end{align}
The case when some of these inequalities does not hold was not studied. Here we consider this case, but instead of $M$ we take the set
\begin{align}
\label{widehat_m}
\widehat M = \left\{ f:\Omega \rightarrow \R, \;\; \left\| \frac{\nabla ^r f}{g}\right\|^{p_1}_{L_{p_1}(\Omega)}+\left\| \frac{f}{g_0}\right\|^{p_1}_{L_{p_1}(\Omega)}\le 1, \;\; \|wf\|_{L_{p_0}(\Omega)}\le 1\right\},
\end{align}
where $g$, $w$, $v$ are given by (\ref{gwv}), (\ref{h_theta}),
\begin{align}
\label{g0_def}
g_0(x) = {\rm dist}^{r‐\beta}(x, \, \Gamma)
\end{align}
(for $M$, there are difficulties with embedding theorems).

In this paper, we obtain the order estimates for $d_n(\widehat{M}, \, L_{q,v}(\Omega))$. The main idea of the proof is the same as in \cite{vas_inters}. The estimating from above is reduced to estimating the sum of the Kolmogorov widths of the intersections of two finite‐dimensional balls; they can be estimated by the widths of balls in $p_0$‐, $p_1$‐, $2$‐ or $q$‐metrics. In order to estimate the widths from below, it is sufficient to estimate the widths of some intersections of two balls. In \cite{vas_inters}, we reduced this problem to considering the multi‐dimensional cube or octahedron; the estimates for the widths of such sets are well‐known \cite{pietsch1, stesin, bib_gluskin}. If (\ref{rdqpg0beta}) does not hold, it is not sufficient to consider the cube or the octahedron; we apply the results of \cite{vas_fin_dim}.

The paper is organized as follows. In \S 2 we obtain the estimates for the widths of the intersection of two balls in some general function spaces. In \S 3 we apply this result for the intersection of two weighted Sobolev classes. Besides the set $\widehat{M}$ given by \eqref{gwv}, \eqref{h_theta}, \eqref{widehat_m}, \eqref{g0_def}, we consider the analogues of other examples from \cite{vas_inters}. In addition, we will study some cases when the widths of $M$ and $\widehat{M}$ have the same orders.

\section{Estimates for the widths of the intersection of two balls in function spaces}

Let $(\Omega, \, \Sigma, \, {\rm mes})$ be a measure space. We say that $A$, $B\subset \Omega$ do not overlap if ${\rm mes}(A\cap
B)=0$. Let $E$, $E_1, \, \dots, \, E_m\subset \Omega$ be measurable sets, $m\in \N\cup \{\infty\}$. We say that $\{E_i\}_{i=1}^m$ is a partition of $E$ if the sets $E_i$ do not overlap and ${\rm mes}\left(\left(\cup _{i=1}^m E_i\right)\bigtriangleup E\right)=0$.

We denote by $\chi_E(\cdot)$ the indicator function of a set $E$.

Let $1< p_0, \, p_1\le \infty$, $1\le q< \infty$. In \cite{vas_inters} the spaces $X_{p_i}(\Omega)$ ($i=0, \, 1$) and $Y_q(\Omega)$ were defined. We recall their properties. For each measurable set $E\subset \Omega$, we define
\begin{itemize}
\item the spaces $X_{p_i}(E)$ with seminorms
$\|\cdot\|_{X_{p_i}(E)}$, $i=0, \, 1$,
\item the Banach space $Y_q(E)$ with norm $\|\cdot\|_{Y_q(E)}$,
\end{itemize}
which satisfy the following conditions:
\begin{enumerate}
\item $X_{p_i}(E)=\{f|_E:\; f\in X_{p_i}(\Omega)\}$, $i=0, \, 1$, $Y_q(E)=\{f|_E:\; f\in
Y_q(\Omega)\}$;
\item if ${\rm mes}\, E=0$, then $\dim \, Y_q(E)=\dim \, X_{p_i}(E)=0$, $i=0, \, 1$;
\item if $E\subset \Omega$, $E_j\subset \Omega$ ($j\in \N$)
are measurable sets, $E=\sqcup _{j\in \N} E_j$, then
$$
\|f\|_{X_{p_i}(E)}=\left\| \bigl\{
\|f|_{E_j}\|_{X_{p_i}(E_j)}\bigr\}_{j\in
\N}\right\|_{l_{p_i}},\quad f\in X_{p_i}(E), \; i=0, \, 1,
$$
$$
\|f\|_{Y_q(E)}=\left\| \bigl\{\|f|_{E_j}\|
_{Y_q(E_j)}\bigr\}_{j\in \N}\right\|_{l_q}, \quad f\in Y_q(E);
$$
\item if $E\in \Sigma$, $f\in Y_q(\Omega)$, then $f\cdot \chi_E\in
Y_q(\Omega)$.
\end{enumerate}

Let ${\cal P}(\Omega)$  be a subspace of dimension $r_0\in \N$ in the space of equivalence classes of measurable functions on $(\Omega, \, \Sigma, \, \mu)$. For each set $E\in
\Sigma$ we denote
$${\cal P}(E)=\{P|_E:\; P\in {\cal P}(\Omega)\}.$$ Let $G\subset
\Omega$ be a measurable set, and let $T$ be a partition of $G$. We write
$$
{\cal S}_{T}(\Omega)=\{f:\Omega\rightarrow \R:\, f|_E\in
{\cal P}(E),\; E\in T, \; f|_{\Omega\backslash G}=0\}.
$$
If $T$ is finite and for each $E\in T$ the inclusion ${\cal
P}(E)\subset Y_q(E)$ holds, then ${\cal S}_{T}(\Omega)\subset
Y_q(\Omega)$ (see property 4).

For each finite partition $T=\{E_j\}_{j=1}^n$ of a set $E$ and for each function $f\in Y_q(\Omega)$ we write
\begin{align}
\label{fpiqt}
\|f\|_{p_i,q,T}=\left(\sum \limits _{j=1}^n
\|f|_{E_j}\|_{Y_q(E_j)} ^{p_i}\right)^{\frac{1}{p_i}}, \quad i=0, \, 1.
\end{align}

We suppose that there are a partition $\{\Omega _{t,j}\}_{t\ge t_0,
\, j\in \hat J_t}$ of $\Omega$ into measurable subsets (here $t_0\in
\Z_+$) and numbers $c\ge 1$, $s_*>0$, $k_*$, $k_{**}\in \N$, $\gamma_*\ge 0$, $\alpha_*\in \R$,
$\mu_*\in \R$, such that the following assumptions hold.

\begin{Supp} \label{supp1}
For each $t\ge t_0$, $j\in \hat J_t$, the inclusion ${\cal P}(\Omega_{t,j}) \subset X_{p_1}(\Omega_{t,j})\cap X_{p_0}(\Omega_{t,j})$ holds. If $s_*+\frac 1q‐\frac{1}{p_1}>0$ or $p_0\ge q$, then $X_{p_1}(\Omega_{t,j})\cap X_{p_0}(\Omega_{t,j})\subset Y_q(\Omega_{t,j})$.
\end{Supp}
\begin{Supp} \label{supp2}
The following estimate holds:
\begin{align}
\label{card_jt} {\rm card}\, \hat J_t\le c\cdot 2^{\gamma_*k_*t},
\quad t\ge t_0.
\end{align}
\end{Supp}
\begin{Supp} \label{supp3}
For each $t\ge t_0$, $j\in \hat J_t$, there is a sequence of partitions $\{T_{t,j,m}\}_{m\in \Z_+}$ of the set $\Omega_{t,j}$ such that
\begin{align}
\label{ttj0} T_{t,j,0} = \{\Omega_{t,j}\}, \quad {\rm card}\,
T_{t,j,m}\le c\cdot 2^m,
\end{align}
and for all $E\in T_{t,j,m}$
\begin{align}
\label{card_e} {\rm card}\, \{E'\in T_{t,j,m\pm 1}:\; {\rm mes}\,
(E\cap E')>0\} \le c.
\end{align}
\end{Supp}
\begin{Supp} \label{supp4}
If $p_0\ge q$, then for each $t\ge t_0$, $j\in \hat J_t$, $m\in \Z_+$, $E\in T_{t,j,m}$, we have
\begin{align}
\label{f_yqe} \|f\|_{Y_q(E)}\le c\cdot 2^{-\alpha_*k_*t}\cdot
2^{m\left(\frac{1}{p_0}-\frac 1q\right)}\|f\|_{X_{p_0}(E)}.
\end{align}
\end{Supp}
\begin{Supp} \label{supp5}
For each $t\ge t_0$, $j\in \hat J_t$, $m\in \Z_+$, $E\in T_{t,j,m}$, there is a linear continuous projection
$P_E:Y_q(\Omega) \rightarrow {\cal S}_{\{E\}}(\Omega)$ with the following properties:
\begin{enumerate}
\item For each function $f\in X_{p_1}(\Omega)\cap X_{p_0}(\Omega)$ we have
\begin{align}
\label{pef} \|P_E(f\cdot \chi_E)\|_{Y_q(E)}\le c\cdot 2^{-\alpha_*k_*t}\cdot
2^{m\left(\frac{1}{p_0}-\frac 1q\right)}\|f\|_{X_{p_0}(E)};
\end{align}
if $m=0$, then, in addition,
\begin{align}
\label{pef1} \|P_E(f\cdot \chi_E)\|_{Y_q(E)}\le c\cdot 2^{\mu_*k_*t}\|f\|_{X_{p_1}(E)}.
\end{align}
\item For each $t\ge t_0$, $j\in \hat J_t$, $f\in Y_q(\Omega)$
\begin{align}
\label{ptmf_to_f} \sum \limits _{E\in T_{t,j,m}} P_E(f\cdot \chi_E) \underset{m\to \infty}{\to} f\cdot \chi _{\Omega_{t,j}}\text{ in the space } Y_q(\Omega).
\end{align}
\item Let $E\in T_{t,j,m}$, $E'\in T_{t,j,m\pm 1}$, ${\rm mes}\, (E\cap E')>0$, $f\in X_{p_1}(\Omega)\cap X_{p_0}(\Omega)$, $\tilde P_Ef$, $\tilde P_{E'}f\in {\cal P}(\Omega)$, $(\tilde P_Ef)|_E=P_E(f\cdot \chi _E)|_{E}$, $(\tilde P_{E'}f)|_{E'}=P_{E'}(f\cdot \chi _{E'})|_{E'}$. Then
\begin{align}
\label{fpef} \|\tilde P_Ef‐\tilde P_{E'}f\|_{Y_q(E\cup E')}\le c \cdot 2^{\mu_*k_*t}\cdot
2^{-m\left(s_*+\frac 1q-\frac{1}{p_1}\right)} \|f\| _{X_{p_1}(E\cup E')}.
\end{align}
\end{enumerate}
\end{Supp}

In addition, we suppose that the following assumption holds.

\begin{Supp} \label{supp6}
For each $t\ge
t_0$, $m\in \Z_+$, there are functions $\varphi_j^{t,m}\in
X_{p_0}(\Omega) \cap X_{p_1}(\Omega)$ $(1\le j\le \nu'_{t,m})$ with pairwise disjoint supports such that
$$
\nu'_{t,m} = \lceil c^{-1}2^{\gamma_*k_{**}t}\cdot 2^m\rceil,
$$
$$ \begin{array}{c} \|\varphi_j^{t,m}\|
_{Y_q(\Omega)} = 1, \quad \|\varphi _j^{t,m}\| _{X_{p_0}(\Omega)}
\le c\cdot 2^{\alpha_*k_{**}t}\cdot2^{m\left(1/q
-1/p_0\right)}, \\ \|\varphi _j^{t,m}\| _{X_{p_1}(\Omega)}
\le c\cdot 2^{-\mu_*k_{**}t}\cdot 2^{m\left(s_*+1/q-1/p_1\right)}.
\end{array}
$$
\end{Supp}

We write $$BX_{p_i}(\Omega) = \{f\in X_{p_i}(\Omega):\;
\|f\|_{X_{p_i}(\Omega)}\le 1\}, \quad i=0, \, 1,$$
$$
M = BX_{p_0}(\Omega) \cap BX_{p_1}(\Omega),
$$
$$
\mathfrak{Z}_0=(p_0, \, p_1, \, q, \, r_0, \, c, \, k_*,\, k_{**},\, s_*, \,
\gamma_*, \, \mu_*, \, \alpha_*).
$$

In \cite{vas_inters} the order estimates for $d_n(M, \, Y_q(\Omega))$ were obtained; it was supposed that Assumptions \ref{supp1}‐‐\ref{supp6} without (\ref{pef1}) and (\ref{ptmf_to_f}) hold, and (\ref{fpef}) was replaced by
$$
\|f-P_E(f\cdot \chi_E)\|_{Y_q(E)}\le c \cdot 2^{\mu_*k_*t}\cdot
2^{-m\left(s_*+\frac 1q-\frac{1}{p_1}\right)} \|f\| _{X_{p_1}(E)};
$$
in addition, it was supposed that
\begin{align}
\label{posit}
\begin{array}{c}
s_* +\frac 1q ‐\frac{1}{p_1}>0, \quad s_*+\frac{1}{p_0}‐\frac{1}{p_1}>0, \\ \mu_*+\alpha_*>0, \quad \mu_*+\alpha_* + \gamma_*/p_0‐\gamma_*/p_1>0. 
\end{array}
\end{align}
Here we obtain the estimates for $d_n(M, \, Y_q(\Omega))$ when at least one of the values in (\ref{posit}) is negative. In addition, we notice the cases when it is possible to obtain the estimates without the condition (\ref{pef1}).

{\bf Auxiliary assertions.} First we write the discretization lemmas.

Let $1\le s\le \infty$. We denote by $l_s^N$ the space $\R^N$ with norm
$$
\|(x_1, \, \dots, \, x_N)\|_{l_s^N} =\left\{ \begin{array}{l} \left(\sum \limits_{j=1}^N |x_j|^s\right)^{1/s}, \quad s<\infty, \\ \max _{1\le j\le N}|x_j|, \quad s=\infty,\end{array}\right.
$$
and by $B_s^N$, the unit ball in $l_s^N$.

Given $t$, $m\in \Z_+$, we denote
\begin{align}
\label{nu_t_m_defin} \nu_{t,m} = \lceil 2\cdot 2^{\gamma_*k_*t}\cdot 2^m\rceil,
\end{align}
\begin{align}
\label{wtm_defin}
W_{t,m} =
2^{\mu_*k_*t}\cdot 2^{‐m(s_*+1/q‐1/p_1)}B_{p_1}^{\nu_{t,m}} \cap 
2^{‐\alpha_*k_*t}\cdot 2^{‐m(1/q‐1/p_0)}B_{p_0}^{\nu_{t,m}}.
\end{align}

The following assertion is obtained in \cite[p. 30]{vas_inters}.
\begin{Lem}
\label{discr_lemma_low}
Let Assumption \ref{supp6} hold. Then for all $t\ge t_0$, $m\ge 0$
\begin{align}
\label{low_est}
d_n(M, \, Y_q(\Omega)) \underset{\mathfrak{Z}_0}{\gtrsim} d_n(W_{t,m}, \, l_q^{\nu_{t,m}}).
\end{align}
\end{Lem}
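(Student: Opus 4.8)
The plan is to use Assumption \ref{supp6} to embed a suitably dilated copy of $B_{p_1}^{\nu'_{t,m}}\cap B_{p_0}^{\nu'_{t,m}}$ into $M$ through an isometric embedding of $l_q^{\nu'_{t,m}}$ into $Y_q(\Omega)$, and then to pull the width inequality back using the monotonicity of $d_n$ under inclusion together with the $1$-complementedness of that embedded copy. Fix $t\ge t_0$ and $m\ge 0$, let $\varphi_1,\dots,\varphi_N$ (with $N=\nu'_{t,m}$) be the functions $\varphi_j^{t,m}$ provided by Assumption \ref{supp6}, and put $S_j:=\supp\varphi_j$; by hypothesis these sets are pairwise disjoint. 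Define $A\colon\R^N\to Y_q(\Omega)$ by $A(a_1,\dots,a_N)=\sum_{j=1}^N a_j\varphi_j$. Applying property 3 of $Y_q$ to the partition $\{S_1,\dots,S_N,\Omega\setminus\bigcup_j S_j\}$ and using $\|\varphi_j\|_{Y_q(\Omega)}=1$ gives $\|Aa\|_{Y_q(\Omega)}=\|a\|_{l_q^N}$, so $A$ is an isometry of $l_q^N$ onto the subspace $L_0:=A(\R^N)\subset Y_q(\Omega)$; running the same computation with the seminorms of $X_{p_0}$ and $X_{p_1}$ (property 3 for those spaces) and the bounds on $\|\varphi_j\|_{X_{p_i}(\Omega)}$ from Assumption \ref{supp6} yields $\|Aa\|_{X_{p_0}(\Omega)}\le c\cdot 2^{\alpha_*k_{**}t}2^{m(1/q-1/p_0)}\|a\|_{l_{p_0}^N}$ and $\|Aa\|_{X_{p_1}(\Omega)}\le c\cdot 2^{-\mu_*k_{**}t}2^{m(s_*+1/q-1/p_1)}\|a\|_{l_{p_1}^N}$. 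Hence, letting $\widetilde W$ be the intersection of $c^{-1}2^{\mu_*k_{**}t}2^{-m(s_*+1/q-1/p_1)}B_{p_1}^N$ with $c^{-1}2^{-\alpha_*k_{**}t}2^{-m(1/q-1/p_0)}B_{p_0}^N$, we have $A(\widetilde W)\subset BX_{p_0}(\Omega)\cap BX_{p_1}(\Omega)=M$.

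The next step is to exhibit a norm-one linear projection $Q$ of $Y_q(\Omega)$ onto $L_0$. For each $j$ I would choose, by the Hahn--Banach theorem, $\psi_j\in Y_q(\Omega)^*$ with $\|\psi_j\|=1$ and $\psi_j(\varphi_j)=\|\varphi_j\|_{Y_q(\Omega)}=1$, and set $\theta_j(f)=\psi_j(f\chi_{S_j})$ (which is well defined by property 4). Since the $S_j$ are pairwise disjoint one has $\theta_j(\varphi_i)=\delta_{ij}$, so $Qf:=\sum_{j=1}^N\theta_j(f)\varphi_j$ is a linear projection onto $L_0$, and property 3 together with the estimate $|\theta_j(f)|\le\|f\chi_{S_j}\|_{Y_q(\Omega)}=\|f|_{S_j}\|_{Y_q(S_j)}$ shows $\|Qf\|_{Y_q(\Omega)}=\left(\sum_j|\theta_j(f)|^q\right)^{1/q}\le\left(\sum_j\|f|_{S_j}\|_{Y_q(S_j)}^q\right)^{1/q}\le\|f\|_{Y_q(\Omega)}$, i.e.\ $\|Q\|\le 1$. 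The standard argument then applies: for $C\subset L_0$, an $n$-dimensional subspace $L\subset Y_q(\Omega)$, $x\in C$ and $y\in L$ we have $\dist_{L_0}(x,QL)\le\|Q(x-y)\|_{Y_q(\Omega)}\le\|x-y\|_{Y_q(\Omega)}$ (using $Qx=x$ and that $Y_q(\Omega)$ and $L_0$ carry the same norm on $L_0$), whence $d_n(C,L_0)\le d_n(C,Y_q(\Omega))$. Taking $C=A(\widetilde W)$ and combining with $A(\widetilde W)\subset M$, the monotonicity of $d_n$, and the fact that $A\colon l_q^N\to L_0$ is an isometric isomorphism, I get $d_n(M,Y_q(\Omega))\ge d_n(A(\widetilde W),Y_q(\Omega))\ge d_n(A(\widetilde W),L_0)=d_n(\widetilde W,l_q^N)$.

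Finally I would identify $d_n(\widetilde W,l_q^N)$ with $d_n(W_{t,m},l_q^{\nu_{t,m}})$: both are intersections of a $p_1$-ball and a $p_0$-ball whose radii agree up to factors depending only on $\mathfrak{Z}_0$, and the dimensions $N=\nu'_{t,m}$ and $\nu_{t,m}$ differ by a $\mathfrak{Z}_0$-bounded factor, so by the elementary scaling behaviour of the Kolmogorov widths (and the standard comparison of widths of balls of comparable dimension) the two widths agree up to a $\mathfrak{Z}_0$-dependent constant, which is \eqref{low_est}. I expect the only genuinely substantive point to be the construction of the norm-one block projection $Q$ — this is exactly the $1$-complementedness of $l_q^N$ inside $Y_q(\Omega)$ that lets the widths computed in $L_0$ and in $Y_q(\Omega)$ be compared; everything else is a routine combination of Assumption \ref{supp6} with the monotonicity, scaling and isometry-invariance properties of $d_n$.
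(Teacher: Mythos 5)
The paper does not actually prove Lemma \ref{discr_lemma_low} here; it is quoted from \cite{vas_inters}. Your construction --- pushing a rescaled copy of $B_{p_0}^N\cap B_{p_1}^N$ into $M$ by means of the functions $\varphi_j^{t,m}$ of Assumption \ref{supp6}, and pulling the width back through the norm-one block projection $Q$ onto their span --- is exactly the standard discretization argument used there, and those steps are all sound: the isometry of $A$ on $l_q^N$ and the bounds on $\|Aa\|_{X_{p_i}(\Omega)}$ follow from property 3 of the spaces together with the disjointness of the supports, the Hahn--Banach functionals give $\theta_j(\varphi_i)=\delta_{ij}$ and $\|Q\|\le 1$, and the inequality $d_n(C,L_0)\le d_n(C,Y_q(\Omega))$ for $C\subset L_0$ is the standard consequence of $1$-complementedness.

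The genuine gap is in your final paragraph, where you compare $\widetilde W$ in dimension $N=\nu'_{t,m}$ with $W_{t,m}$ in dimension $\nu_{t,m}$. Assumption \ref{supp6} produces exponents $2^{\gamma_*k_{**}t}$, $2^{\alpha_*k_{**}t}$, $2^{-\mu_*k_{**}t}$, whereas $\nu_{t,m}$ and $W_{t,m}$ in \eqref{nu_t_m_defin}, \eqref{wtm_defin} are built from $k_*$, and $k_*$, $k_{**}$ are independent entries of $\mathfrak{Z}_0$. Consequently the ratios of the radii and of the dimensions are of order $2^{\mu_*(k_{**}-k_*)t}$, $2^{\alpha_*(k_{**}-k_*)t}$, $2^{\gamma_*(k_{**}-k_*)t}$, which are \emph{not} bounded by a constant depending only on $\mathfrak{Z}_0$ unless $k_*=k_{**}$ or the corresponding exponents vanish; so the assertion that ``the radii and the dimensions agree up to $\mathfrak{Z}_0$-dependent factors'' fails, and with it the last step. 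The missing idea is to invoke Assumption \ref{supp6} not at the given $t$ but at a reindexed $t'$ with $k_{**}t'\in[k_*t,\,k_*t+k_{**})$ (for instance $t'=\lceil k_*t/k_{**}\rceil$, adjusted so that $t'\ge t_0$), and at $m'=m+O_{\mathfrak{Z}_0}(1)$ chosen so that $\nu'_{t',m'}\ge \nu_{t,m}$; then every discrepancy becomes a genuine $\mathfrak{Z}_0$-constant, and one concludes by restricting to the first $\nu_{t,m}$ coordinates (the coordinate projection has norm one in $l_q$) and using the homogeneity $d_n(\lambda K,X)=\lambda\, d_n(K,X)$. Note also that the dimension comparison must go in this direction: you need $N\ge \nu_{t,m}$ in order to pass from $d_n(\cdot\,,l_q^{N})$ down to $d_n(W_{t,m},l_q^{\nu_{t,m}})$; a merely ``comparable'' but smaller $N$ would require a separate argument about how the widths of intersections of balls depend on the ambient dimension.
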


As in \cite[p. 10]{vas_inters}, we write $$\Omega_t=\cup _{j\in \hat J_t} \Omega _{t,j},$$ define the partitions $T_{t,m}$ and $\hat T_{t,m}$ of the set $\Omega_t$
by
\begin{align}
\label{ttm_def}
T_{t,m} = \{E\in T_{t,j,m}:\; j\in \hat J_t\}, \quad \hat T_{t,m}
= \{E\cap E':\;  E\in T_{t,m}, \; E'\in T_{t,m+1}\},
\end{align}
and obtain
\begin{align}
\label{ttm} {\rm card}\, T_{t,m} \stackrel{(\ref{card_jt}),
(\ref{ttj0})}{\underset{\mathfrak{Z}_0}{\lesssim}} 2^{\gamma
_*k_*t}\cdot 2^m, \quad {\rm card}\, \hat T_{t,m}
\stackrel{(\ref{card_e})}{\underset{\mathfrak{Z}_0}{\lesssim}}
2^{\gamma _*k_*t}\cdot 2^m.
\end{align}
We also define the operator $P_{t,m}:Y_q(\Omega) \rightarrow Y_q(\Omega)$ by
$$
P_{t,m}f = \sum \limits _{j\in \hat J_t} \sum \limits _{E\in
T_{t,j,m}} P_E(f \cdot \chi _E)
$$
and obtain the following estimates \cite[p. 10]{vas_inters}
\begin{align}
\label{rk_ptm} {\rm rk}\, P_{t,m}
\stackrel{(\ref{ttm})}{\underset{\mathfrak{Z}_0}{\lesssim}}
2^{\gamma _*k_*t}\cdot 2^m, \quad {\rm rk}\,
(P_{t,m+1}-P_{t,m})
\stackrel{(\ref{ttm})}{\underset{\mathfrak{Z}_0}{\lesssim}}
2^{\gamma _*k_*t}\cdot 2^m
\end{align}
(here ${\rm rk}$ in the range of an operator),
\begin{align}
\label{ptmp0a}
\|P_{t,m}f\|_{p_0,q,T_{t,m}}
\stackrel{(\ref{pef})}{\underset{\mathfrak{Z}_0}{\lesssim}}
2^{-\alpha_*k_*t}\cdot 2^{m(1/p_0-1/q)}, \quad f\in M,
\end{align}
\begin{align}
\label{ptmptm1p0a}
\|P_{t,m+1}f - P_{t,m}f\| _{p_0,q,\hat
T_{t,m}} \stackrel{(\ref{card_e}),
(\ref{pef})}{\underset{\mathfrak{Z}_0}{\lesssim}}
2^{-\alpha_*k_*t}\cdot 2^{m(1/p_0-1/q)}, \quad f\in M.
\end{align}
Similarly we obtain the estimate
\begin{align}
\label{ptmp1m}
\|P_{t,0}f\|_{p_1,q,T_{t,0}}
\stackrel{(\ref{pef1})}{\underset{\mathfrak{Z}_0}{\lesssim}}
2^{\mu_*k_*t}, \quad f\in M.
\end{align}
We prove that 
\begin{align}
\label{ptmptm1p1m}
\|P_{t,m+1}f - P_{t,m}f\| _{p_1,q,\hat
T_{t,m}} \underset{\mathfrak{Z}_0}{\lesssim}
2^{\mu_*k_*t}\cdot 2^{‐m(s_*+1/q‐1/p_1)}, \quad f\in M.
\end{align}
Indeed, by (\ref{fpiqt}) and (\ref{ttm_def}) we get
$$
\|P_{t,m+1}f - P_{t,m}f\| _{p_1,q,\hat
T_{t,m}} = \left(\sum \limits _{E\in T_{t,m},\,E'\in T_{t,m+1}}\|P_E(f\cdot \chi_E)‐P_{E'}(f\cdot \chi_{E'})\|^{p_1}_{Y_q(E\cap E')}\right)^{1/p_1} \le
$$
$$
\le \left(\sum \limits _{E\in T_{t,m}} \sum \limits _{E'\in T_{t,m+1},\, {\rm mes}(E\cap E')>0}\|\tilde P_Ef‐\tilde P_{E'}f\|^{p_1}_{Y_q(E)}\right)^{1/p_1}\stackrel{(\ref{fpef})}{\underset{\mathfrak{Z}_0}{\lesssim}}
$$
$$
\lesssim 2^{\mu_*k_*t}\cdot 2^{‐m(s_*+1/q‐1/p_1)}\left(\sum \limits _{E\in T_{t,m}} \sum \limits _{E'\in T_{t,m+1},\, {\rm mes}(E\cap E')>0}\|f\|^{p_1}_{X_{p_1}(E\cup E')}\right)^{1/p_1}\stackrel{(\ref{card_e})}{\underset{\mathfrak{Z}_0}{\lesssim}}
$$
$$
\lesssim 2^{\mu_*k_*t}\cdot 2^{‐m(s_*+1/q‐1/p_1)}\|f\|_{X_{p_1}(\Omega_t)} \le 2^{\mu_*k_*t}\cdot 2^{‐m(s_*+1/q‐1/p_1)}.
$$

Let $\tilde \Omega_t = \cup _{l\ge t} \cup _{i\in J_l} \Omega _{l,i}$.

It follows from Assumption \ref{supp1} and (\ref{ptmf_to_f}) that if $s_*+\frac 1q‐\frac{1}{p_1}>0$ or $p_0>q$, then for $f\in X_{p_0}(\Omega)\cap X_{p_1}(\Omega)$ 
$$
\|f‐P_{t,m}f\|_{Y_q(\Omega_t)} \underset{m\to \infty}{\to}0.
$$
Hence, for each $\hat t\ge t_0$
$$
f = \sum \limits _{t=t_0}^{\hat t} P_{t,0}f + \sum \limits _{t=t_0}^{\hat t} \sum \limits _{m=0}^\infty (P_{t,m+1}f ‐ P_{t,m}f) +f\cdot \chi _{\tilde \Omega _{\hat t+1}}.
$$

In \cite[Proposition 3]{vas_inters} it was proved that for $l\in \Z_+$
$$
d_l((P_{t,m+1}‐P_{t,m})M, \, Y_q(\Omega)) \underset{\mathfrak{Z}_0}{\lesssim} d_l(W_{t+\tau_0,m}, \, l_q^{\nu_{t+\tau_0,m}}),
$$
$$
d_l(P_{t,0}M, \, Y_q(\Omega)) \underset{\mathfrak{Z}_0}{\lesssim} 2^{‐\alpha_*k_*t}d_l(B_{p_0}^{\nu_{t+\tau_0,m}}, \, l_q^{\nu_{t+\tau_0,m}}),
$$
where $\tau_0\in \Z_+$ depends only on $\mathfrak{Z}_0$. Here we applied (\ref{nu_t_m_defin}), (\ref{rk_ptm}), (\ref{ptmp0a}), (\ref{ptmptm1p0a}), (\ref{ptmptm1p1m}). Employing (\ref{ptmp1m}) together with (\ref{ptmp0a}), we similarly obtain that
$$
d_l(P_{t,0}M, \, Y_q(\Omega)) \underset{\mathfrak{Z}_0}{\lesssim} d_l(W_{t+\tau_0,0}, \, l_q^{\nu_{t+\tau_0,0}}).
$$

This yields the following assertion.

\begin{Lem}
\label{discr_lemma}
Let Assumptions \ref{supp1}‐‐\ref{supp5} hold, let $n\in \Z_+$, $\hat t(n)\ge t_0$, $k_{t,m}\in \Z_+$, $C\in \N$, $\sum \limits _{t=t_0}^{\hat t(n)}\sum \limits_{m=0}^\infty k_{t,m} \le Cn$. Then there is $C_1=C_1(\mathfrak{Z}_0)\in \N$ such that
\begin{align}
\label{up_est}
d_{C_1Cn}(M, \, Y_q(\Omega)) \underset{\mathfrak{Z}_0}{\lesssim} \sum \limits _{t=0}^{\hat t(n)}\sum \limits_{m=0}^\infty d_{k_{t,m}}(W_{t,m}, \, l_q^{\nu_{t,m}}) + \sup _{f\in M} \|f\| _{Y_q(\tilde \Omega _{\hat t(n)+1})}.
\end{align}
In addition,
\begin{align}
\label{embed_est_width} \sup _{f\in M} \|f\|_{Y_q (\tilde \Omega _t)} \underset{\mathfrak{Z}_0}{\lesssim} \sum \limits _{l=t}^\infty\sum \limits_{m=0}^\infty d_0(W_{l,m}, \, l_q^{\nu_{l,m}}).
\end{align}
\end{Lem}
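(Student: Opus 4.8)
The plan is to assemble the statement from three facts that are already in place: the decomposition of an arbitrary $f\in M$ displayed just before the statement, the monotonicity and countable subadditivity of Kolmogorov widths, and the per-block estimates recalled above from \cite[Proposition 3]{vas_inters}, used together with the identity $d_0(C,X)=\sup_{x\in C}\|x\|_X$. Concretely, by that decomposition $M$ is contained in the Minkowski sum of the sets $P_{t,0}M$ and $(P_{t,m+1}-P_{t,m})M$ over $t_0\le t\le\hat t(n)$, $m\ge0$, together with $\{f\cdot\chi_{\tilde\Omega_{\hat t(n)+1}}:f\in M\}$. I would allot dimension $0$ to the last set, so that it contributes exactly $\sup_{f\in M}\|f\|_{Y_q(\tilde\Omega_{\hat t(n)+1})}$; and dimension $k_{t+\tau_0,m}$ to the block $(P_{t,m+1}-P_{t,m})M$ — and, when $m=0$, also $k_{t+\tau_0,0}$ to $P_{t,0}M$ — whenever $t+\tau_0\le\hat t(n)$, and dimension $0$ otherwise, where $\tau_0$ is the shift from \cite[Proposition 3]{vas_inters}. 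Inserting the quoted bounds $d_l((P_{t,m+1}-P_{t,m})M,Y_q(\Omega))\lesssim d_l(W_{t+\tau_0,m},l_q^{\nu_{t+\tau_0,m}})$ and $d_l(P_{t,0}M,Y_q(\Omega))\lesssim d_l(W_{t+\tau_0,0},l_q^{\nu_{t+\tau_0,0}})$ and re-indexing $s=t+\tau_0$, the contribution of all blocks with $t+\tau_0\le\hat t(n)$ becomes $\lesssim_{\mathfrak{Z}_0}\sum_{s=0}^{\hat t(n)}\sum_{m\ge0}d_{k_{s,m}}(W_{s,m},l_q^{\nu_{s,m}})$, the main term of \eqref{up_est} (extending the sum down to $s=0$ only adds nonnegative terms). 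Since each $k_{s,m}$ is used at most twice, the total dimension is at most $2Cn$, and by monotonicity of the widths any $C_1\ge2$ works.

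The one point requiring care — and the main obstacle — is the shift $\tau_0$: the blocks at levels $t\in(\hat t(n)-\tau_0,\hat t(n)]$ produce $W$-indices $s=t+\tau_0\in(\hat t(n),\hat t(n)+\tau_0]$ that do not occur on the right-hand side of \eqref{up_est}. Having given these blocks dimension $0$, their total contribution is $\lesssim_{\mathfrak{Z}_0}\sum_{s=\hat t(n)+1}^{\hat t(n)+\tau_0}\sum_{m\ge0}d_0(W_{s,m},l_q^{\nu_{s,m}})$, and this must be absorbed into $\sup_{f\in M}\|f\|_{Y_q(\tilde\Omega_{\hat t(n)+1})}$. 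Here I would argue as in \cite{vas_inters}: it suffices to show, for each of the finitely many $s>\hat t(n)$, that $\sum_{m\ge0}d_0(W_{s,m},l_q^{\nu_{s,m}})\lesssim_{\mathfrak{Z}_0}\sup_{f\in M}\|f\|_{Y_q(\tilde\Omega_{\hat t(n)+1})}$. If the right-hand side of \eqref{up_est} is infinite there is nothing to prove; otherwise the $2^{-m(\,\cdot\,)}$ factors entering the definition of $W_{s,m}$ make the $m$-series geometrically convergent with ratio bounded by a $\mathfrak{Z}_0$-constant, so $\sum_{m\ge0}d_0(W_{s,m},l_q^{\nu_{s,m}})\lesssim_{\mathfrak{Z}_0}d_0(W_{s,0},l_q^{\nu_{s,0}})$, while a localized form of Lemma \ref{discr_lemma_low} — applying the construction of Assumption \ref{supp6} with functions $\varphi_j^{s,0}$ whose supports lie in $\tilde\Omega_s\subset\tilde\Omega_{\hat t(n)+1}$ — gives $d_0(W_{s,0},l_q^{\nu_{s,0}})\lesssim_{\mathfrak{Z}_0}\sup_{f\in M}\|f\|_{Y_q(\tilde\Omega_{\hat t(n)+1})}$. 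Summing over the at most $\tau_0$ values of $s$ then completes \eqref{up_est}; everything else in this part is bookkeeping.

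For \eqref{embed_est_width} I would argue directly and more simply. Since $\tilde\Omega_t=\sqcup_{l\ge t}\Omega_l$ up to null sets, $\|f\|_{Y_q(\tilde\Omega_t)}=\big\|\{\|f\|_{Y_q(\Omega_l)}\}_{l\ge t}\big\|_{l_q}\le\sum_{l\ge t}\|f\cdot\chi_{\Omega_l}\|_{Y_q(\Omega)}$, and on each level $\|f\cdot\chi_{\Omega_l}\|_{Y_q(\Omega)}\le\|P_{l,0}f\|_{Y_q(\Omega)}+\sum_{m\ge0}\|(P_{l,m+1}-P_{l,m})f\|_{Y_q(\Omega)}$; taking $\sup_{f\in M}$ and inserting the same per-block bounds yields $\sup_{f\in M}\|f\|_{Y_q(\tilde\Omega_t)}\lesssim_{\mathfrak{Z}_0}\sum_{l\ge t}\big(d_0(W_{l+\tau_0,0},\,\cdot\,)+\sum_{m\ge0}d_0(W_{l+\tau_0,m},\,\cdot\,)\big)$. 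Here the shift $l\mapsto l+\tau_0$ only deletes the first $\tau_0$ terms of a sum running to infinity, so the right-hand side is at most $\sum_{l\ge t}\sum_{m\ge0}d_0(W_{l,m},l_q^{\nu_{l,m}})$, which is \eqref{embed_est_width}. The geometric convergence of the $m$-series is the only place where the sign of the quantities in \eqref{posit} plays any role; apart from the boundary handling just described, each step is routine.
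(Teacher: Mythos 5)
Your overall route is the same as the paper's: the paper derives Lemma~\ref{discr_lemma} directly from the telescoping decomposition $f=\sum_t P_{t,0}f+\sum_t\sum_m(P_{t,m+1}f-P_{t,m}f)+f\chi_{\tilde\Omega_{\hat t+1}}$, subadditivity of Kolmogorov widths over Minkowski sums, and the two per-block bounds quoted from \cite[Proposition 3]{vas_inters}, exactly as you do. Your argument for \eqref{embed_est_width} is correct and is the intended one: there the shift $l\mapsto l+\tau_0$ merely drops the first $\tau_0$ terms of a series extending to infinity, so it is harmless. The dimension bookkeeping ($C_1=2$, each $k_{s,m}$ used at most twice) is also fine.

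The weak point is your treatment of the boundary levels $t\in(\hat t(n)-\tau_0,\hat t(n)]$, which is the one place where you go beyond what the paper writes (the paper silently absorbs the shift $\tau_0$ into the $\underset{\mathfrak{Z}_0}{\lesssim}$ and does not discuss these levels at all). Two of your claims there do not follow from Assumptions \ref{supp1}--\ref{supp6} alone. First, the series $\sum_{m}d_0(W_{s,m},l_q^{\nu_{s,m}})$ need not converge geometrically: since $\nu_{s,m}\asymp 2^{\gamma_*k_*s}2^m$, one has $d_0\bigl(2^{-\alpha_*k_*s}2^{-m(1/q-1/p_0)}B_{p_0}^{\nu_{s,m}},l_q^{\nu_{s,m}}\bigr)\asymp 2^{(-\alpha_*+\gamma_*(1/q-1/p_0))k_*s}$ for $p_0>q$, which is \emph{constant} in $m$, and if in addition $p_1>q$ and $s_*+\frac1q-\frac1{p_1}\le0$ the $p_1$-factor is constant or increasing in $m$ as well, so the minimum does not decay; this is precisely why Proposition~\ref{p0_ge_q} needs a case analysis and, in case 3, an argument via \eqref{f_yqe} and H\"older rather than via $d_0(W_{l,m},\cdot)$. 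Second, the ``localized form of Lemma~\ref{discr_lemma_low}'' requires the functions $\varphi_j^{s,0}$ of Assumption~\ref{supp6} to be supported in $\tilde\Omega_s\subset\tilde\Omega_{\hat t(n)+1}$; Assumption~\ref{supp6} only asserts pairwise disjoint supports, so this is an extra hypothesis (true in all the applications of \S3, but not available at the abstract level at which the lemma is stated). So as a literal proof of the statement your patch has a gap; the reading consistent with the paper's (unwritten) proof is simply that the sum on the right of \eqref{up_est} is to be understood up to the constant reindexing $t\mapsto t+\tau_0$, which is irrelevant in every application since the summands there are of the form $2^{\kappa_1t+\kappa_2m}n^{\sigma}$.
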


\begin{Sta}
\label{discr_up1} Let Assumptions \ref{supp1}‐‐\ref{supp5} hold without (\ref{pef1}), let $n\in \Z_+$, $\hat t(n)\ge t_0$, $k_{t,m}\in \Z_+$, $s_t\in \Z_+$, $C\in \N$, $\sum \limits _{t=t_0}^{\hat t(n)}\sum \limits_{m=0}^\infty k_{t,m} + \sum \limits _{t=t_0}^{\hat t(n)} s_{t}\le Cn$. Then there is $C_1=C_1(\mathfrak{Z}_0)$ such that
\begin{align}
\label{up_est0}
\begin{array}{c}
d_{C_1Cn}(M, \, Y_q(\Omega)) \underset{\mathfrak{Z}_0}{\lesssim} \sum \limits _{t=0}^{\hat t(n)}\sum \limits_{m=0}^\infty d_{k_{t,m}}(W_{t,m}, \, l_q^{\nu_{t,m}})+\\ +\sum \limits _{t=0}^{\hat t(n)} d_{s_t}(2^{‐\alpha_*k_*t}B_{p_0}^{\nu_{t,0}}, \, l_q^{\nu_{t,0}}) + \sup _{f\in M} \|f\| _{Y_q(\tilde \Omega _{\hat t(n)+1})}.
\end{array}
\end{align}

\end{Sta}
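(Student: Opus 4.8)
The plan is to repeat the proof of Lemma~\ref{discr_lemma} almost word for word, modifying only the one estimate in which (\ref{pef1}) is used.

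First I would recall the decomposition
$$
f = \sum_{t=t_0}^{\hat t(n)} P_{t,0}f + \sum_{t=t_0}^{\hat t(n)}\sum_{m=0}^{\infty}\bigl(P_{t,m+1}f-P_{t,m}f\bigr) + f\cdot\chi_{\tilde\Omega_{\hat t(n)+1}},\qquad f\in M,
$$
established in the discussion preceding the statement; its derivation uses only Assumption~\ref{supp1} and the convergence (\ref{ptmf_to_f}), and not (\ref{pef1}), so it is available here as well. Hence $M$ lies in the Minkowski sum of $\sum_{t=t_0}^{\hat t(n)} P_{t,0}M$, of $\sum_{t=t_0}^{\hat t(n)}\sum_{m=0}^{\infty}(P_{t,m+1}-P_{t,m})M$, and of $\{f\cdot\chi_{\tilde\Omega_{\hat t(n)+1}}:f\in M\}$.

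Next I would apply the subadditivity of the Kolmogorov widths under Minkowski sums, $d_{l_1+l_2+\dots}(A_1+A_2+\dots,X)\le d_{l_1}(A_1,X)+d_{l_2}(A_2,X)+\dots$, exactly as in the proof of Lemma~\ref{discr_lemma} (the infinite sum over $m$ causes no trouble: for the indices with $k_{t,m}=0$ one simply keeps the terms $d_0(W_{t+\tau_0,m},l_q^{\nu_{t+\tau_0,m}})$, which are summable in $m$). This gives
$$
d_{\sum_t s_t+\sum_{t,m}k_{t,m}}(M,Y_q(\Omega))\lesssim\sum_{t}d_{s_t}(P_{t,0}M,Y_q(\Omega))+\sum_{t,m}d_{k_{t,m}}\bigl((P_{t,m+1}-P_{t,m})M,Y_q(\Omega)\bigr)+\sup_{f\in M}\|f\|_{Y_q(\tilde\Omega_{\hat t(n)+1})}.
$$
For the differences I insert the bound $d_{k_{t,m}}((P_{t,m+1}-P_{t,m})M,Y_q(\Omega))\lesssim d_{k_{t,m}}(W_{t+\tau_0,m},l_q^{\nu_{t+\tau_0,m}})$ of \cite[Proposition~3]{vas_inters}, which rests on (\ref{rk_ptm}), (\ref{ptmp0a}), (\ref{ptmptm1p0a}), (\ref{ptmptm1p1m}) and not on (\ref{pef1}). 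For the top level, in place of the estimate $d_{s_t}(P_{t,0}M,Y_q(\Omega))\lesssim d_{s_t}(W_{t+\tau_0,0},l_q^{\nu_{t+\tau_0,0}})$ used in Lemma~\ref{discr_lemma} --- whose proof goes through (\ref{ptmp1m}), hence through (\ref{pef1}) --- I use the other bound recorded in the same place,
$$
d_{s_t}(P_{t,0}M,Y_q(\Omega))\lesssim 2^{-\alpha_*k_*t}\,d_{s_t}\bigl(B_{p_0}^{\nu_{t+\tau_0,0}},l_q^{\nu_{t+\tau_0,0}}\bigr),
$$
which follows solely from the rank estimate for $P_{t,0}$ and from (\ref{ptmp0a}) with $m=0$, i.e. from (\ref{pef}) alone.

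Finally, the shift of the summation index by the fixed $\tau_0=\tau_0(\mathfrak Z_0)$, the monotonicity of $d_s(B_{p_0}^N,l_q^N)$ and of $d_s(W_{t,m},l_q^{\nu_{t,m}})$ in the ambient dimension (here $\gamma_*\ge0$ gives $\nu_{t-\tau_0,m}\le\nu_{t,m}$), the estimate $2^{-\alpha_*k_*(t-\tau_0)}\lesssim 2^{-\alpha_*k_*t}$, and the homogeneity $d_{s_t}(2^{-\alpha_*k_*t}B_{p_0}^{\nu_{t,0}},l_q^{\nu_{t,0}})=2^{-\alpha_*k_*t}d_{s_t}(B_{p_0}^{\nu_{t,0}},l_q^{\nu_{t,0}})$ let one rewrite the right-hand side in the form of (\ref{up_est0}); the number of widths $\sum_t s_t+\sum_{t,m}k_{t,m}\le Cn$ is replaced by $C_1Cn$ with $C_1=C_1(\mathfrak Z_0)$, exactly as in Lemma~\ref{discr_lemma}. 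The only substantive thing to verify --- and the main (rather mild) obstacle --- is the bookkeeping claim that (\ref{pef1}) enters the proof of Lemma~\ref{discr_lemma} nowhere except in deriving (\ref{ptmp1m}), and (\ref{ptmp1m}) only in improving $d_{s_t}(P_{t,0}M,Y_q(\Omega))\lesssim 2^{-\alpha_*k_*t}d_{s_t}(B_{p_0}^{\nu_{t+\tau_0,0}},l_q^{\nu_{t+\tau_0,0}})$ to $d_{s_t}(P_{t,0}M,Y_q(\Omega))\lesssim d_{s_t}(W_{t+\tau_0,0},l_q^{\nu_{t+\tau_0,0}})$; dropping this improvement is precisely what produces the extra term $\sum_t d_{s_t}(2^{-\alpha_*k_*t}B_{p_0}^{\nu_{t,0}},l_q^{\nu_{t,0}})$ in (\ref{up_est0}).
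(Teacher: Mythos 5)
Your proposal is correct and follows exactly the route the paper intends: the paper itself dispenses with Proposition~\ref{discr_up1} by remarking that its analogue was proved in \cite{vas_inters}, and the displayed bounds before Lemma~\ref{discr_lemma} already record that $d_l(P_{t,0}M,\,Y_q(\Omega))\lesssim 2^{-\alpha_*k_*t}d_l(B_{p_0}^{\nu_{t+\tau_0,0}},\,l_q^{\nu_{t+\tau_0,0}})$ rests only on (\ref{pef}), while the improvement to $d_l(W_{t+\tau_0,0},\,l_q^{\nu_{t+\tau_0,0}})$ is the sole place (\ref{ptmp1m}), hence (\ref{pef1}), enters. Your bookkeeping of where (\ref{pef1}) is used, and the resulting extra term $\sum_t d_{s_t}(2^{-\alpha_*k_*t}B_{p_0}^{\nu_{t,0}},\,l_q^{\nu_{t,0}})$, matches the paper's argument.
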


In fact, the analogue of Proposition \ref{discr_up1} was proved in \cite{vas_inters}.

In \cite{vas_fin_dim}, estimates for the Kolmogorov widths of $B_{p_0}^N\cap \nu B_{p_1}^N$ for $p_0>p_1$ were obtained. Rewrite this result for the widths of $\nu_0B_{p_0}^N\cap \nu_1 B_{p_1}^N$.

In what follows, we define the numbers $\lambda$ and $\tilde \lambda$ by 
\begin{align}
\label{lam_til_lam}
\frac 1q=\frac{1‐\lambda}{p_1} +\frac{\lambda}{p_0}, \quad \frac 12=\frac{1‐\tilde\lambda}{p_1} +\frac{\tilde\lambda}{p_0}.
\end{align}
Notice that 
\begin{align}
\label{s_1_lam}
(1‐\lambda)\left(s_*+\frac 1q‐\frac{1}{p_1}\right) +\lambda\left(\frac 1q‐\frac{1}{p_0}\right) = (1‐\lambda)s_*. 
\end{align}

If $p_0\le q\le p_1$ or $p_1\le q\le p_0$, then
\begin{align}
\label{wtm_lq_emb} \nu_0B_{p_0}^N\cap \nu_1 B_{p_1}^N \subset \nu_0^\lambda \nu_1^{1‐\lambda}B^N_q;
\end{align}
if $p_0\le 2\le p_1$ or $p_1\le 2\le p_0$, then
$$
\nu_0B_{p_0}^N\cap \nu_1 B_{p_1}^N \subset \nu_0^{\tilde \lambda} \nu_1^{1‐\tilde\lambda}B^N_2.
$$
It follows from H\"{o}lder's inequality or from Galeev's result \cite[Theorem
2]{galeev1}, \cite[Theorem 1]{galeev4}.

We formulate the corollary of the main result of \cite{vas_fin_dim} (see Theorem 1 and remarks before and after it).
\begin{trma}
\label{dn_inters}
Let $1\le p_1<p_0\le \infty$, $1\le q<\infty$, $N\in \N$, $n\in \Z_+$, $n\le N/2$. We define the numbers $\lambda$ and $\tilde \lambda$ by (\ref{lam_til_lam}).

If $\nu_1/\nu_0 \le 1$, then 
\begin{align}
\label{sved_k_p1}
d_n(\nu_0B_{p_0}^N\cap \nu_1 B_{p_1}^N, \, l_q^N)=d_n(\nu_1 B_{p_1}^N, \, l_q^N).
\end{align}

If $\nu_1/\nu_0\ge N^{1/p_1‐1/p_0}$, then 
\begin{align}
\label{sved_k_p0}
d_n(\nu_0B_{p_0}^N\cap \nu_1 B_{p_1}^N, \, l_q^N)=d_n(\nu_0 B_{p_0}^N, \, l_q^N).
\end{align}

Let $1<\nu_1/\nu_0< N^{1/p_1‐1/p_0}$. We set
$\kappa = (\nu_1/\nu_0)^{\frac{\frac 12‐\frac 1q}{\frac{1}{p_1}‐\frac{1}{p_0}}}$. Then the following estimates hold.
\begin{enumerate}
\item Let $1\le p_1< p_0\le q\le 2$ or $1\le p_1< p_0\le 2< q$. Then
$$
d_n(\nu_0B_{p_0}^N\cap \nu_1 B_{p_1}^N, \, l_q^N)
\underset{p_0,p_1,q}{\asymp} d_n(\nu_0 B_{p_0}^N, \, l_q^N).
$$

\item Let $1\le p_1<2<p_0<q<\infty$. Then
$$
d_n(\nu_0B_{p_0}^N\cap \nu_1 B_{p_1}^N, \, l_q^N)
\underset{p_0,p_1,q}{\asymp} \left\{ \begin{array}{l} d_n(\nu_0 B_{p_0}^N, \, l_q^N) \quad
\text{if} \quad n^{\frac 12} \cdot N^{‐\frac 1q}\le \kappa,\\
d_n(\nu_1^{1‐\tilde \lambda}\nu_0^{\tilde \lambda} B_2^N, \, l_q^N) \quad
\text{if} \quad n^{\frac 12} \cdot N^{‐\frac 1q}\ge \kappa.
\end{array}\right.
$$
\item Let $2\le p_1<p_0<q<\infty$. Then
$$
d_n(\nu_0B_{p_0}^N\cap \nu_1 B_{p_1}^N, \, l_q^N)
\underset{p_0,p_1,q}{\asymp} \left\{ \begin{array}{l} d_n(\nu_0 B_{p_0}^N, \, l_q^N) \quad
\text{if} \quad n^{\frac 12} \cdot N^{‐\frac 1q}\le \kappa,\\
d_n(\nu_1 B_{p_1}^N, \, l_q^N) \quad \text{if} \quad
n^{\frac 12} \cdot N^{‐\frac 1q}\ge \kappa.
\end{array}\right.
$$
\item Let $2\le p_1\le q\le p_0$. Then
$$
d_n(\nu_0B_{p_0}^N\cap \nu_1 B_{p_1}^N, \, l_q^N)
\underset{p_0,p_1,q}{\asymp} \left\{ \begin{array}{l}
d_n(\nu_1^{1‐\lambda}\nu_0^{\lambda}B_q^N, \, l_q^N) \quad
\text{if} \quad n^{\frac 12} \cdot N^{‐\frac 1q}\le \kappa,\\
d_n(\nu_1 B_{p_1}^N, \, l_q^N) \quad \text{if} \quad
n^{\frac 12} \cdot N^{‐\frac 1q}\ge \kappa.
\end{array}\right.
$$
\item Let $1\le p_1<2<q\le p_0$. Then
$$
d_n(\nu_0B_{p_0}^N\cap \nu_1 B_{p_1}^N, \, l_q^N)
\underset{p_0,p_1,q}{\asymp} \left\{ \begin{array}{l}
d_n(\nu_1^{1‐\lambda}\nu_0^{\lambda}B_q^N, \, l_q^N) \quad
\text{if} \quad n^{\frac 12} \cdot N^{‐\frac 1q}\le \kappa,\\
 d_n(\nu_1^{1‐\tilde \lambda} \nu_0^{\tilde \lambda}B_2^N, \, l_q^N) \quad
\text{if} \quad n^{\frac 12} \cdot N^{‐\frac 1q}\ge \kappa.
\end{array}\right.
$$
\item Let $q\le 2$, $1\le p_1<q< p_0$. Then
$$
d_n(\nu_0B_{p_0}^N\cap \nu_1 B_{p_1}^N, \, l_q^N)
\underset{p_0,p_1,q}{\asymp} d_n(\nu_1^{1‐\lambda}\nu_0^{\lambda}B_q^N, \, l_q^N).
$$
\item Let $1\le q\le p_1<p_0\le \infty$. Then
$$
d_n(\nu_0B_{p_0}^N\cap \nu_1 B_{p_1}^N, \, l_q^N)
\underset{p_0,p_1,q}{\asymp} d_n(\nu_1B_{p_1}^N, \, l_q^N).
$$
\end{enumerate}
\end{trma}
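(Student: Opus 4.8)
The plan is to deduce the statement from Theorem~1 of \cite{vas_fin_dim}, which treats $d_n(B_{p_0}^N\cap \nu B_{p_1}^N, l_q^N)$ with $\nu\ge 1$, by a homogeneity argument. Since the Kolmogorov widths are positively homogeneous, $d_n(\nu_0 C, X)=\nu_0 d_n(C, X)$, one has
$$
d_n(\nu_0B_{p_0}^N\cap \nu_1 B_{p_1}^N,\, l_q^N)=\nu_0\, d_n\bigl(B_{p_0}^N\cap \nu B_{p_1}^N,\, l_q^N\bigr), \qquad \nu:=\nu_1/\nu_0,
$$
so it suffices to analyse the right‐hand side and then multiply the resulting answer by $\nu_0$.

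First I would dispose of the two degenerate ranges of $\nu$. If $\nu\le 1$, the monotonicity of $\ell_p$‐norms (recall $p_1<p_0$, so $\|x\|_{l_{p_0}^N}\le\|x\|_{l_{p_1}^N}$) gives $\nu B_{p_1}^N\subset B_{p_1}^N\subset B_{p_0}^N$, hence the intersection equals $\nu B_{p_1}^N$; multiplying by $\nu_0$ yields \eqref{sved_k_p1}. If $\nu\ge N^{1/p_1-1/p_0}$, then H\"older's inequality $\|x\|_{l_{p_1}^N}\le N^{1/p_1-1/p_0}\|x\|_{l_{p_0}^N}\le \nu\|x\|_{l_{p_0}^N}$ shows $B_{p_0}^N\subset \nu B_{p_1}^N$, so the intersection equals $B_{p_0}^N$; multiplying by $\nu_0$ yields \eqref{sved_k_p0}. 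These are exact set identities, so the restriction $n\le N/2$ plays no role here.

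For the remaining range $1<\nu<N^{1/p_1-1/p_0}$ I would invoke Theorem~1 of \cite{vas_fin_dim} together with the remarks surrounding it. It describes $d_n(B_{p_0}^N\cap \nu B_{p_1}^N, l_q^N)$, up to constants depending only on $p_0,p_1,q$, in each of the regimes singled out by the position of $2$ and $q$ relative to $p_1$ and $p_0$; the transition between ``$p_0$‐type'' and ``$p_1$‐, $2$‐ or $q$‐type'' behaviour occurs when $n^{1/2}N^{-1/q}$ crosses $\kappa=\nu^{(\frac12-\frac1q)/(\frac1{p_1}-\frac1{p_0})}$. Then I would translate each model answer back to the original normalisation by homogeneity once more: $\nu_0 d_n(B_{p_0}^N,l_q^N)=d_n(\nu_0B_{p_0}^N,l_q^N)$, $\nu_0\nu\, d_n(B_{p_1}^N,l_q^N)=d_n(\nu_1B_{p_1}^N,l_q^N)$, $\nu_0\nu^{1-\lambda}d_n(B_q^N,l_q^N)=d_n(\nu_1^{1-\lambda}\nu_0^{\lambda}B_q^N,l_q^N)$, and $\nu_0\nu^{1-\tilde\lambda}d_n(B_2^N,l_q^N)=d_n(\nu_1^{1-\tilde\lambda}\nu_0^{\tilde\lambda}B_2^N,l_q^N)$, with $\lambda,\tilde\lambda$ as in \eqref{lam_til_lam}. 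This reproduces the list (1)--(7); the embeddings \eqref{wtm_lq_emb} and their $B_2$‐analogue serve as the consistency check that the ``$B_q$'' and ``$B_2$'' expressions are indeed upper bounds whenever $q$, resp.\ $2$, lies between $p_1$ and $p_0$.

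The main obstacle is purely one of bookkeeping: one must verify that after the substitution $\nu=\nu_1/\nu_0$ the threshold $\kappa$ and the seven regimes stated here coincide exactly with those of \cite{vas_fin_dim} --- this is where \eqref{lam_til_lam} and the identity \eqref{s_1_lam} are used to rewrite the relevant exponents --- and one must treat the endpoint $p_0=\infty$ (reading $1/p_0=0$ throughout) and check that at the boundary values $\nu=1$ and $\nu=N^{1/p_1-1/p_0}$ the main‐range answer degenerates continuously into \eqref{sved_k_p1} and \eqref{sved_k_p0}. No genuinely new estimate is needed beyond \cite{vas_fin_dim}; the content is the reduction together with this matching of formulations.
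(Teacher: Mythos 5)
Your proposal is correct and follows essentially the same route as the paper: the paper itself presents Theorem \ref{dn_inters} not with a detailed proof but as a rescaled corollary of Theorem 1 of \cite{vas_fin_dim} (obtained exactly by the homogeneity reduction $d_n(\nu_0B_{p_0}^N\cap\nu_1B_{p_1}^N,\,l_q^N)=\nu_0\,d_n(B_{p_0}^N\cap(\nu_1/\nu_0)B_{p_1}^N,\,l_q^N)$ you describe, with the degenerate ranges handled by the same set inclusions). No discrepancy to report.
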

For $p_0<p_1$, the estimates can be rewrited by rearranging the indices 0 and 1.

The estimates of the widths of $B_p^N$ in $l_q^N$ were obtained in \cite{pietsch1, stesin, bib_kashin, bib_gluskin, garn_glus, k_p_s, stech_poper, kashin_oct} (for details, see \cite{kniga_pinkusa, itogi_nt}). We formulate these results for the cases we need below.

\begin{trma}
\label{glus} {\rm \cite{bib_gluskin}} Let $1\le p\le q<\infty$,
$0\le n\le N/2$.
\begin{enumerate}
\item Let $1\le q\le 2$. Then $d_n(B_p^N, \, l_q^N) \underset{p,q}{\asymp}
1$.

\item Let $2<q<\infty$, $\lambda_{pq} =\min \left\{1, \,
\frac{1/p-1/q}{1/2-1/q}\right\}$. Then
$$
d_n(B_p^N, \, l_q^N) \underset{p,q}{\asymp} \min \{1, \,
n^{-1/2}N^{1/q}\} ^{\lambda_{pq}}.
$$
\end{enumerate}
\end{trma}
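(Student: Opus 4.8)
The statement is classical — it is contained in \cite{bib_gluskin}, and its special cases go back to \cite{pietsch1, stesin, bib_kashin, garn_glus, k_p_s, stech_poper, kashin_oct} (see also \cite{kniga_pinkusa, itogi_nt}) — so the plan is to organize the argument around the known widths of $B_2^N$ and of the sparse $\pm1$ sets in $l_q^N$, not to reprove those. Put $\delta=\min\{1,\,n^{-1/2}N^{1/q}\}$, so that $\delta=1$ precisely when $n\le N^{2/q}$; the constants in $\lesssim$, $\asymp$ below depend only on $p,q$. Two elementary facts are used throughout: since $p\le q$ one has $\|x\|_q\le\|x\|_p$, whence $d_n(B_p^N,l_q^N)\le d_0(B_p^N,l_q^N)=\sup_{\|x\|_p\le1}\|x\|_q=1$; and the $l_p^N$–norms are monotone in $p$, so $B_p^N\subset B_2^N$ when $p\ge2$.

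\emph{Upper bounds.} The bound $d_n\le1$ already gives part~1 from above and part~2 from above in the range $n\le N^{2/q}$ (there $\delta^{\lambda_{pq}}=1$). For part~2 with $n\ge N^{2/q}$ one needs $d_n(B_p^N,l_q^N)\lesssim(n^{-1/2}N^{1/q})^{\lambda_{pq}}$. If $p\le2$, then $\tfrac1p\ge\tfrac12$ forces $\lambda_{pq}=1$, and by $B_p^N\subset B_2^N$ it is enough to know $d_n(B_2^N,l_q^N)\asymp n^{-1/2}N^{1/q}$ in this range — the classical estimate for the Euclidean ball, obtained e.g.\ by passing to the dual Gelfand widths $d^n(B_{q'}^N,l_2^N)$ and applying the random–subspace method of Kashin and Garnaev–Gluskin. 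If $2<p\le q$, one splits each $x\in B_p^N$ into dyadic layers $x=\sum_{k\ge0}x^{(k)}$, where $x^{(k)}$ consists of the coordinates with $|x_i|\in(2^{-k-1},2^{-k}]$; from $\|x\|_p\le1$ the layer $x^{(k)}$ has at most $2^{pk+O(1)}$ nonzero coordinates, so $x^{(k)}$ lies in $2^{-k}$ times $m_kB_1^N\cap B_\infty^N$ with $m_k\asymp2^{pk}$ (capped at $N$), there being $O(\log N)$ nontrivial layers plus a negligible tail. By subadditivity of Kolmogorov widths under Minkowski sums, $d_{\sum_kn_k}(B_p^N,l_q^N)\lesssim\sum_k2^{-k}d_{n_k}(m_kB_1^N\cap B_\infty^N,\,l_q^N)+(\mathrm{tail})$; the widths on the right are of the type covered by Theorem~\ref{dn_inters} and by \cite{bib_gluskin, garn_glus, galeev1, galeev4}, and distributing $\sum_kn_k=n$ optimally produces the exponent $\lambda_{pq}$ determined by $\tfrac1p=\tfrac{1-\lambda_{pq}}{q}+\tfrac{\lambda_{pq}}{2}$. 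Alternatively one factors $l_p^N\to l_q^N$ through $l_2^N$ and interpolates approximation numbers, using $a_n(\mathrm{id}:l_2^N\to l_q^N)\asymp n^{-1/2}N^{1/q}$ for $q<\infty$.

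\emph{Lower bounds and the main difficulty.} For $q\le2$ (part~1) the lower bound is elementary: $e_i\in B_p^N$, and $\|y\|_q\ge\|y\|_2$ gives $\mathrm{dist}_{l_q}(e_i,M)\ge\mathrm{dist}_{l_2}(e_i,M)$ for every subspace $M$; since $\sum_{i=1}^N\mathrm{dist}_{l_2}(e_i,M)^2=N-\dim M\ge N/2$ when $\dim M\le n\le N/2$, some $i$ gives $\mathrm{dist}_{l_q}(e_i,M)\ge2^{-1/2}$, hence $d_n(B_p^N,l_q^N)\ge2^{-1/2}$. For $q>2$ (part~2) the endpoint $n\asymp N$ follows from a Bernstein–type bound: taking for $L$ the span of $n+1$ coordinate vectors, $d_n(B_p^N,l_q^N)\ge\inf_{x\in L\setminus\{0\}}\|x\|_q/\|x\|_p=(n+1)^{1/q-1/p}$, which is of the right order $N^{(1/q-1/2)\lambda_{pq}}$ at $n\asymp N$ when $p\ge2$; for $p\le2$ one uses instead $\mathrm{dist}_{l_q}(e_i,M)\ge N^{1/q-1/2}\mathrm{dist}_{l_2}(e_i,M)$ together with the same trace identity. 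For intermediate $n$ these devices no longer suffice, and one passes to the dual Gelfand widths $d_n(B_p^N,l_q^N)=d^n(B_{q'}^N,l_{p'}^N)$ and invokes the Garnaev–Gluskin/Kashin lower estimates, proved by the random–matrix method. \textbf{The main obstacle} is exactly this regime $2<p<q$ with $N^{2/q}\ll n\ll N$: matching the sharp power $\lambda_{pq}$ on both sides is the technical core of \cite{bib_gluskin}, where the sparse–layer decomposition of $B_p^N$ is combined with sharp norm bounds for random $\pm1$ matrices. The only gluing one checks by hand is that the two regimes $n\le N^{2/q}$ and $n\ge N^{2/q}$ agree at the threshold, which is automatic since both expressions equal $1$ there.
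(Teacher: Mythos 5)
The paper does not prove this statement at all: Theorem~\ref{glus} is quoted verbatim from \cite{bib_gluskin} (with the surrounding references \cite{pietsch1, stesin, bib_kashin, garn_glus, k_p_s, stech_poper, kashin_oct}) as a known finite-dimensional width estimate, so there is no internal proof to compare against. Your sketch is a broadly accurate account of the classical argument: the trivial bound $d_n\le 1$ and the trace identity $\sum_i\mathrm{dist}_{l_2}(e_i,M)^2=N-\dim M$ for part~1 and for the endpoint of part~2, the reduction to $d_n(B_2^N,l_q^N)$ when $p\le 2$, the dyadic-layer (sparse) decomposition for $2<p\le q$, and duality with Gelfand widths plus the Garnaev--Gluskin/Gluskin random-matrix lower bounds in the intermediate range $N^{2/q}\ll n\ll N$, which is indeed the hard core of \cite{bib_gluskin}. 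Two small cautions: in your preamble the inclusion should read $B_p^N\subset B_2^N$ when $p\le 2$ (monotonicity of $\|\cdot\|_p$ goes the other way for $p\ge 2$; your later use of the inclusion is in the correct direction, so this is only a slip); and invoking Theorem~\ref{dn_inters} for the widths of $m_kB_1^N\cap B_\infty^N$ is logically backwards within this paper, since that theorem (via \cite{vas_fin_dim}) itself rests on the ball estimates of Theorem~\ref{glus} --- for a self-contained argument you should appeal directly to \cite{bib_gluskin, garn_glus} there, as you do in your alternative via factoring through $l_2^N$.
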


\begin{trma}
\label{p_s} {\rm \cite{pietsch1, stesin}} Let $1\le q\le p\le
\infty$, $0\le n\le N$. Then
$$
d_n(B_p^N, \, l_q^N) = (N-n)^{1/q-1/p}.
$$
\end{trma}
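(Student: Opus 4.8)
This is the classical Pietsch--Stesin formula \cite{pietsch1, stesin}; the plan is to prove $\le$ and $\ge$ separately. Write $m=N-n$ and recall $q\le p$. For the upper bound, take the coordinate subspace $L=\mathrm{span}\{e_1,\dots,e_n\}\in{\cal L}_n(l_q^N)$. For $x\in B_p^N$ its nearest point in $L$ in the $l_q$-metric is $(x_1,\dots,x_n,0,\dots,0)$, so $\dist(x,L)=\|(x_{n+1},\dots,x_N)\|_{l_q^m}$. Since $q\le p$, the power-mean (H\"older) inequality gives $\|y\|_{l_q^m}\le m^{1/q-1/p}\|y\|_{l_p^m}$ for $y\in\R^m$; with $y=(x_{n+1},\dots,x_N)$ and $\|y\|_{l_p^m}\le\|x\|_{l_p^N}\le1$ this yields $\dist(x,L)\le m^{1/q-1/p}$, hence $d_n(B_p^N,l_q^N)\le m^{1/q-1/p}$. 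Equality in H\"older shows the bound is attained at $x=m^{-1/p}(0,\dots,0,1,\dots,1)$.

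For the lower bound --- the substantial direction --- I would use the antipodality principle for widths: if $F\subset\R^N$ is an $(n+1)$-dimensional subspace with $\|x\|_{l_q^N}\ge r\|x\|_{l_p^N}$ for all $x\in F$ (equivalently $\{x\in F:\|x\|_{l_q^N}\le r\}\subset B_p^N$), then $d_n(B_p^N,l_q^N)\ge r$. This follows from Borsuk's antipodal theorem: were there an $L$, $\dim L=n$, with $\sup_{x\in B_p^N}\dist(x,L)<r$, then on the compact set $\{x\in F:\|x\|_{l_q^N}=r\}$ (a topological $S^n$) one would have $\dist(x,L)\le r-\delta$ for some $\delta>0$, so the map sending such an $x$ to the barycentre in $L$ of the nonempty convex compact set $\{y\in L:\|x-y\|_{l_q^N}\le r-\delta\}$ is continuous and odd, hence vanishes somewhere; at that point $0$ lies in that set, forcing $\|x\|_{l_q^N}\le r-\delta<r$ --- contradicting $\|x\|_{l_q^N}=r$. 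Thus it is enough to produce one $(n+1)$-dimensional $F$ on which $\|x\|_{l_q^N}\ge m^{1/q-1/p}\|x\|_{l_p^N}$; dually, via the identity $d_n(B_p^N,l_q^N)=d^n(B_{q'}^N,l_{p'}^N)$ (width duality, $1/p+1/p'=1/q+1/q'=1$) and the definition of the Gelfand $n$-width $d^n$, it is enough to show that every $m$-dimensional subspace of $\R^N$ contains a nonzero vector $x$ with $\|x\|_{l_b^N}\ge m^{1/b-1/a}\|x\|_{l_a^N}$, where $(a,b)=(q',p')$ and $b\le a$.

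The core of the argument is this last lemma, and the hard part is the case $1<a<\infty$. When $a=\infty$ it is easy: if $\dim L=m$, any extreme point $v$ of the polytope $L\cap B_\infty^N$ is nonzero with $\|v\|_\infty=1$, and is determined by at least $m$ linearly independent active constraints $|v_i|=1$ (so at least $m$ coordinates of $v$ equal $\pm1$), whence $\|v\|_{l_b^N}^b\ge m=m\|v\|_\infty^b$, which is the claim. For $1<a<\infty$ the ball $B_a^N$ is strictly convex and this counting breaks down; here one needs the full strength of Stesin's lemma, which I would approach either through a variational analysis of the extremiser of $\|x\|_{l_b^N}/\|x\|_{l_a^N}$ on $L$, or by comparing the volumes of the $m$-dimensional sections $B_a^N\cap L$ and $B_b^N\cap L$ (Vaaler/Meyer-type section estimates). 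Granting the lemma, the antipodality principle gives $d_n(B_p^N,l_q^N)\ge m^{1/q-1/p}$, which together with the upper bound yields the asserted equality.
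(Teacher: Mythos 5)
The paper does not prove this statement: it is quoted verbatim as a classical result of Pietsch and Stesin, with a citation standing in for the proof. So your proposal is measured against the literature rather than against an argument in the text. Your upper bound is complete and correct: projecting onto the first $n$ coordinates and applying H\"older's inequality on the remaining $m=N-n$ coordinates gives $d_n(B_p^N,l_q^N)\le m^{1/q-1/p}$, and your extremal vector shows this choice of subspace is optimal. Your reduction of the lower bound is also the right architecture: the ball-width theorem (via Borsuk) plus Gelfand--Kolmogorov duality reduces everything to the assertion that every $m$-dimensional subspace of $\R^N$ contains a nonzero $x$ with $\|x\|_{l_b^N}\ge m^{1/b-1/a}\|x\|_{l_a^N}$ for $b\le a$.

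The genuine gap is that this last lemma --- which \emph{is} Stesin's theorem, and is the entire content of the lower bound --- is not proved. You dispose of $a=\infty$ by counting active constraints at an extreme point, but for $1<a<\infty$ you only name two possible strategies without executing either. Of these, the volume-of-sections route is doubtful here: the theorem asserts an exact equality, so the lower bound must be achieved with constant exactly $1$, whereas Meyer--Pajor-type section inequalities are not sharp enough coordinate-free tools to recover the precise factor $m^{1/b-1/a}$ on an arbitrary subspace. The variational route is the one Stesin actually takes (analysing the critical points of $\|x\|_b/\|x\|_a$ restricted to the subspace and showing the maximiser must be supported, in a suitable sense, on at least $m$ coordinates with equal weight), and carrying that analysis out is nontrivial. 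A secondary, more minor issue: in your Borsuk argument the barycentre of the section $\{y\in L:\|x-y\|_{l_q^N}\le r-\delta\}$ need not depend continuously on $x$ when the section degenerates in dimension; the standard fix is to use the Chebyshev centre or to work with the (single-valued, continuous) metric projection in an auxiliary Euclidean structure. As it stands, the proposal is an accurate roadmap to the Pietsch--Stesin theorem, but not a proof of it.
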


{\bf The embedding theorems for $X_{p_1}(\Omega) \cap X_{p_0}(\Omega)$ into $Y_q(\Omega)$.}

Notice that if $p_0\le q$, then the condition $s_*+\frac 1q‐\frac{1}{p_1}>0$ is necessary for the compact embedding. Indeed, if $s_*+\frac 1q‐\frac{1}{p_1}\le 0$, then for each $m\in \Z_+$ we have
$2^{‐m(s_*+1/q‐1/p_1)}\ge 1$, $2^{‐m(1/q‐1/p_0)}\ge 1$; hence $$d_n(W_{t_0,m}, \, l_q^{\nu_{t_0,m}}) \stackrel{(\ref{wtm_defin})}{\underset{\mathfrak{Z}_0, \, t_0}{\gtrsim}} d_n(B_1^{\nu_{t_0,m}}, \, l_q^{\nu_{t_0,m}}) \underset{q}{\gtrsim} 1$$ for sufficiently large $m$ (see Theorem \ref{glus} and (\ref{nu_t_m_defin})). Hence by Lemma \ref{discr_lemma_low}, we have $d_n(M, \, Y_q(\Omega)) \underset{\mathfrak{Z}_0}{\gtrsim} 1$. In what follows we assume that $s_*+\frac 1q‐\frac{1}{p_1}>0$ or $p_0>q$.

We denote $x_+=\max\{x, \, 0\}$ for $x\in \R$.

Let the number $\nu_*$ be defined as follows.
\begin{enumerate}
\item Let one of the following conditions hold: a) $\mu_*+\alpha_*\le 0$, $\mu_*+\alpha_*+\gamma_*/p_0 ‐ \gamma_*/p_1\le 0$, $s_*+1/q‐1/p_1>0$; b) $p_0<p_1\le q$, $\mu_*+\alpha_*\le 0$, $s_*+1/q‐1/p_1>0$, c) $p_0>p_1\ge q$, $\mu_*+\alpha_*+\gamma_*/p_0‐\gamma_*/p_1\le 0$. Then 
\begin{align}
\label{nu1}
\nu_*=‐\mu_*‐\left(\frac{\gamma_*}{q} ‐ \frac{\gamma_*}{p_1}\right)_+.
\end{align}

\item Let one of the following conditions hold: a) $p_1<q < p_0$, $\mu_*+\alpha_*+\gamma_*/p_0‐ \gamma_*/p_1 \le 0 \le\mu_*+\alpha_*$, b) $p_0<q<p_1$, $\mu_*+\alpha_* \le 0 \le \mu_* + \alpha_* +\gamma_*/p_0 ‐ \gamma_*/p_1$. Then 
\begin{align}
\label{nu2}
\nu_* = \frac{\alpha_*(1/p_1‐1/q)+\mu_*(1/p_0‐1/q)}{1/p_1‐1/p_0}.
\end{align}
\item Let $p_0\ge q$, $ \mu_* + \alpha_* +\gamma_*/p_0 ‐ \gamma_*/p_1 \ge 0$. Then 
\begin{align}
\label{nu3}
\nu_* = \alpha_* +\frac{\gamma_*}{p_0} ‐ \frac{\gamma_*}{q}.
\end{align}

\item Let one of the following conditions hold: a) $\mu_* + \alpha_* +\gamma_*/p_0 ‐ \gamma_*/p_1 <0\le \mu_*+\alpha_*$, $p_1\le p_0\le q$, $s_*+\frac 1q‐\frac{1}{p_1}>0$, b) $\mu_*+\alpha_*\le 0$, $p_1<q< p_0$, $s_*+\frac 1q ‐ \frac{1}{p_1}<0$. Then
\begin{align}
\label{nu4}
\nu_* = \frac{\alpha_*(s_*+1/q‐1/p_1)+\mu_*(1/q‐1/p_0)}{s_*+1/p_0‐1/p_1}.
\end{align}
\end{enumerate}

\begin{Sta}
\label{p0_ge_q} 
Let $\nu_*>0$ be defined by (\ref{nu1})‐‐(\ref{nu4}). Then for each function $f\in M$ and for each $t\ge t_0$
\begin{align}
\label{emb_nu} \|f\|_{Y_q(\tilde \Omega_t)} \underset{\mathfrak{Z}_0}{\lesssim} 2^{‐\nu_*k_*t}.
\end{align}
\end{Sta}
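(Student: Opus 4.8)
The statement is a bound on $\sup_{f\in M}\|f\|_{Y_q(\tilde\Omega_t)}$ (since $f\in M$ means $\|f\|_{X_{p_0}},\|f\|_{X_{p_1}}\le 1$). By \eqref{embed_est_width} of Lemma \ref{discr_lemma}, it suffices to show
$$
\sum\limits_{l\ge t}\sum\limits_{m=0}^\infty d_0(W_{l,m},\,l_q^{\nu_{l,m}})\underset{\mathfrak{Z}_0}{\lesssim}2^{-\nu_*k_*t}.
$$
So the whole task reduces to a careful estimate of the single quantity $d_0(W_{l,m},\,l_q^{\nu_{l,m}})$, where $W_{l,m}$ is the intersection of two finite-dimensional balls given by \eqref{wtm_defin}, and then summing a (double) geometric-type series in $l$ and $m$.

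\textbf{Step 1: estimate $d_0(W_{l,m},l_q^N)$ via Theorem \ref{dn_inters}.} Write $N=\nu_{l,m}\asymp 2^{\gamma_*k_*l}2^m$, $\nu_0=2^{-\alpha_*k_*l}2^{-m(1/q-1/p_0)}$ (the $p_0$-scaling), $\nu_1=2^{\mu_*k_*l}2^{-m(s_*+1/q-1/p_1)}$ (the $p_1$-scaling). Since we only need $d_0$ — i.e., just the diameter/radius of $W_{l,m}$ in $l_q^N$ — I would use the inclusions \eqref{wtm_lq_emb} and its $B_2^N$ analogue together with Hölder when $q$ lies between $p_0$ and $p_1$ (or near $2$), and otherwise the appropriate branch of Theorem \ref{dn_inters} at $n=0$ (where all the stated $\asymp$'s become simple expressions in $N$, $\nu_0$, $\nu_1$ via Theorems \ref{glus} and \ref{p_s}). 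In each case $d_0(W_{l,m},l_q^N)$ comes out as a product $\nu_0^{a}\nu_1^{1-a}N^{b}$ or a max of two such terms, with the exponents $a,b$ dictated by which of the seven regimes of Theorem \ref{dn_inters} (together with the sign of $s_*+1/q-1/p_1$ and the comparison $\nu_1/\nu_0$ vs. $1$ vs. $N^{1/p_1-1/p_0}$) we are in. This is precisely the case-split encoded in the four definitions \eqref{nu1}--\eqref{nu4} of $\nu_*$: each clause 1--4 corresponds to one behavior of the ratio $\nu_1/\nu_0$ and hence to one effective geometry for $W_{l,m}$. Using \eqref{s_1_lam} one checks that in every case $d_0(W_{l,m},l_q^N)\underset{\mathfrak{Z}_0}{\lesssim}2^{-\nu_*k_*l}\cdot 2^{-m\delta}$ for some $\delta=\delta(\mathfrak{Z}_0)>0$; the positivity of $\delta$ in the $m$-exponent is exactly where the hypothesis $s_*+1/q-1/p_1>0$ (or $p_0>q$) gets used, ensuring the $p_0$-ball factor $2^{-m(1/q-1/p_0)}$ contributes decay when the $p_1$-term does not.

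\textbf{Step 2: sum the series.} Once Step 1 gives $d_0(W_{l,m},l_q^N)\lesssim 2^{-\nu_*k_*l}2^{-m\delta}$, summing over $m\ge0$ gives a factor $\lesssim 1$ (geometric series, $\delta>0$), and summing $2^{-\nu_*k_*l}$ over $l\ge t$ gives $\lesssim 2^{-\nu_*k_*t}$ since $\nu_*>0$ by hypothesis. Combined with \eqref{embed_est_width}, this is exactly \eqref{emb_nu}. (One subtlety: in regimes where $d_0$ is a \emph{maximum} of two product-terms, one must check the $m$-exponent is positive in both terms; the constraint $s_*+1/p_0-1/p_1>0$, respectively $s_*>0$, which is implicit in the definitions \eqref{nu4}, \eqref{nu2}, is what guarantees this.)

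\textbf{Main obstacle.} The hard part is bookkeeping: verifying that the four-way definition of $\nu_*$ in \eqref{nu1}--\eqref{nu4} (with its many sub-cases a), b), c)) really does match, clause by clause, the regime structure of Theorem \ref{dn_inters} applied at $n=0$, and that in each resulting case the exponent of $2^{k_*l}$ in $d_0(W_{l,m},l_q^N)$ equals exactly $-\nu_*$ while the exponent of $2^m$ is strictly negative. The computation in each individual case is routine (it is the same algebra as comparing powers of $2$ in the definition of the Sobolev-type critical exponent), but there are enough cases that I would organize the proof as: first reduce $W_{l,m}$ to a ball in $l_{p_1}^N$, $l_{p_0}^N$, $l_q^N$, or $l_2^N$ according to $\nu_1/\nu_0$ and the position of $q$ relative to $p_0,p_1,2$; then in each such reduced situation read off $d_0$ from Theorems \ref{glus}, \ref{p_s}; then collect exponents and invoke \eqref{s_1_lam}. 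The role of the condition $s_*+1/q-1/p_1>0$ or $p_0>q$ — already shown necessary in the excerpt — is to rule out the degenerate case where $W_{l,m}$ fails to shrink in $m$, which would make the series diverge.
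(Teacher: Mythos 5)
Your overall plan matches the paper for cases 1, 2 and 4 of the definition of $\nu_*$: reduce via \eqref{embed_est_width} to estimating $d_0(W_{l,m},\,l_q^{\nu_{l,m}})$ and sum. But the central claim of your Step 1 --- that \emph{in every case} $d_0(W_{l,m},l_q^N)\lesssim 2^{-\nu_*k_*l}\cdot 2^{-m\delta}$ with $\delta>0$ --- is false, and the failure is exactly case 3 ($p_0\ge q$, $\mu_*+\alpha_*+\gamma_*/p_0-\gamma_*/p_1\ge 0$, $\nu_*=\alpha_*+\gamma_*/p_0-\gamma_*/q$). There the $p_0$-ball term gives, by Theorem \ref{p_s} at $n=0$,
$2^{-\alpha_*k_*l}2^{-m(1/q-1/p_0)}\,d_0(B_{p_0}^N,l_q^N)=2^{-\alpha_*k_*l}2^{-m(1/q-1/p_0)}N^{1/q-1/p_0}\asymp 2^{-\nu_*k_*l}$,
i.e.\ the factor $N^{1/q-1/p_0}\asymp (2^{\gamma_*k_*l}2^m)^{1/q-1/p_0}$ exactly cancels the $m$-decay you are counting on (you appear to have treated $d_0(B_{p_0}^N,l_q^N)$ as $O(1)$, which is only true for $p_0\le q$). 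The $p_1$-term is larger than this up to the crossing index $\tilde m_l\asymp l$, so $\sum_{m\ge 0}d_0(W_{l,m},l_q^{\nu_{l,m}})\gtrsim \tilde m_l\,2^{-\nu_*k_*l}$, and the route through \eqref{embed_est_width} loses at least a factor of $t$ (or diverges when $\mu_*+\alpha_*+\gamma_*/p_0-\gamma_*/p_1=0$ and $p_0=q$). This is precisely why the paper abandons the width-sum argument in case 3 and instead derives \eqref{emb_nu} directly from the pointwise estimate \eqref{f_yqe} (with $m=0$, $E=\Omega_{l,j}$) followed by H\"older's inequality over $j\in\hat J_l$ and $l\ge t$, which is where the correction $\gamma_*/p_0-\gamma_*/q$ in \eqref{nu3} actually comes from. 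Your proof is missing this separate mechanism.

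A secondary, less damaging inaccuracy: in case 4 the bound on $d_0(W_{l,m},\cdot)$ is the \emph{minimum} of two exponentials in $m$, one of which may be increasing (e.g.\ when $s_*+1/q-1/p_1<0$ in subcase b), so no single product bound $2^{-\nu_*k_*l}2^{-m\delta}$ holds uniformly in $m$. The sum over $m$ is instead dominated by the value at the crossing point $m_l$ defined by \eqref{ml_def_emb}, and one must verify $m_l\ge 0$ and that this crossing value equals $2^{-\nu_*k_*l}$ via \eqref{nu4}; this is the computation the paper carries out and which your write-up only alludes to. With that repair case 4 goes through along your lines, but case 3 requires the genuinely different argument described above.
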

\begin{proof}
Notice that $d_0(B_p^N, \, l_q^N) = N^{(1/q‐1/p)_+}$.

In case 1, we have $s_*+\frac 1q‐\frac{1}{p_1}>0$; hence
$$
\|f\|_{Y_q(\tilde \Omega_t)} \stackrel{(\ref{embed_est_width})}{\underset{\mathfrak{Z}_0}{\lesssim}} \sum \limits _{l\ge t} \sum \limits _{m\ge 0} d_0(W_{l,m}, \, l_q^{\nu_{l,m}}) \stackrel{(\ref{wtm_defin})}{\le} \sum \limits _{l\ge t} \sum \limits _{m\ge 0} 2^{\mu_*k_*l}\cdot 2^{‐m(s_*+1/q‐1/p_1)}d_0(B_{p_1}^{\nu_{l,m}}, \, l_q^{\nu_{l,m}}) \stackrel{(\ref{nu_t_m_defin})}{\underset{\mathfrak{Z}_0}{\lesssim}}
$$
$$
\lesssim \sum \limits _{l\ge t} \sum \limits _{m\ge 0} 2^{\mu_*k_*l}\cdot 2^{‐m(s_*+1/q‐1/p_1)}\cdot 2^{(\gamma_*k_*l+m)(1/q‐1/p_1)_+} \underset{\mathfrak{Z}_0}{\lesssim} 2^{(\mu_*+(\gamma_*/q‐\gamma_*/p_1)_+)k_*t}.
$$

Consider case 2. We define $\lambda\in (0, \, 1)$  by (\ref{lam_til_lam}). Then
$$
\|f\|_{Y_q(\tilde \Omega_t)} \stackrel{(\ref{embed_est_width})}{\underset{\mathfrak{Z}_0}{\lesssim}} \sum \limits _{l\ge t} \sum \limits _{m\ge 0} d_0(W_{l,m}, \, l_q^{\nu_{l,m}}) \stackrel{(\ref{s_1_lam}), (\ref{wtm_lq_emb})}{\le}$$$$\le \sum \limits _{l\ge t} \sum \limits _{m\ge 0} 2^{(\mu_*(1‐\lambda)‐\alpha_*\lambda)k_*l}\cdot 2^{‐ms_*(1‐\lambda)} \underset{\mathfrak{Z}_0}{\lesssim} 2^{(\mu_*(1‐\lambda)‐\alpha_*\lambda)k_*t}.
$$

In case 3, the assertion follows from (\ref{f_yqe}) and H\"{o}lder's inequality  (see \cite[Proposition 4]{vas_inters}).

Consider case 4. We define the number $m_l$ by 
\begin{align}
\label{ml_def_emb}
2^{‐\alpha_*k_*l}\cdot 2^{‐m_l(1/q‐1/p_0)}=2^{\mu_*k_*l}\cdot 2^{‐m_l(s_*+1/q‐1/p_1)}. 
\end{align}
Let a) hold. If $p_0=q$, then as in case 3 we get $\|f\|_{Y_q(\tilde \Omega_t)} \underset{\mathfrak{Z}_0}{\lesssim} 2^{‐\alpha_*k_*t}$. It is equivalent to (\ref{emb_nu}), where $\nu_*$ is given by (\ref{nu4}). Let $p_0<q$. Notice that $s_*+\frac{1}{p_0}‐\frac{1}{p_1}> s_*+\frac{1}{q}‐\frac{1}{p_1}>0$. Since $\mu_*+\alpha_*\ge 0$, we have $m_l\ge 0$ for $l\ge 0$ (see (\ref{ml_def_emb})). Hence,
$$
\|f\|_{Y_q(\tilde \Omega_t)} \stackrel{(\ref{embed_est_width})}{\underset{\mathfrak{Z}_0}{\lesssim}} \sum \limits _{l\ge t} \sum \limits _{m\ge 0} d_0(W_{l,m}, \, l_q^{\nu_{l,m}})\stackrel{(\ref{wtm_defin})}{\underset{\mathfrak{Z}_0}{\lesssim}}
$$
$$
\lesssim \sum \limits _{l\ge t} \sum \limits _{0\le m\le m_l} 2^{‐\alpha_*k_*l}\cdot 2^{‐m(1/q‐1/p_0)} + \sum \limits _{l\ge t} \sum \limits _{m> m_l} 2^{\mu_*k_*l}\cdot 2^{‐m(s_*+1/q‐1/p_1)} \stackrel{(\ref{ml_def_emb})}{\underset{\mathfrak{Z}_0}{\lesssim}}
$$
$$
\lesssim \sum \limits _{l\ge t} 2^{\mu_*k_*l}\cdot 2^{‐m_l(s_*+1/q‐1/p_1)} \stackrel{(\ref{nu4}), (\ref{ml_def_emb})}{\underset{\mathfrak{Z}_0}{\lesssim}} 2^{‐\nu_*k_*t}.
$$
Let b) hold. Since $\mu_*+\alpha_*\le 0$, $s_*+\frac{1}{p_0}‐\frac{1}{p_1}< s_*+\frac 1q‐\frac{1}{p_1}<0$, we have $m_l\ge 0$ for $l\ge 0$ (see (\ref{ml_def_emb})). We define $\lambda\in (0, \, 1)$ by (\ref{lam_til_lam}) and get
$$
\|f\|_{Y_q(\tilde \Omega_t)} \stackrel{(\ref{embed_est_width})}{\underset{\mathfrak{Z}_0}{\lesssim}} \sum \limits _{l\ge t} \sum \limits _{m\ge 0} d_0(W_{l,m}, \, l_q^{\nu_{l,m}}) \stackrel{(\ref{wtm_defin}), (\ref{s_1_lam}), (\ref{wtm_lq_emb})}{\le}
$$
$$
\le \sum \limits _{l\ge t} \sum \limits _{0\le m\le m_l} 2^{\mu_*k_*l}\cdot 2^{‐m(s_*+1/q‐1/p_1)} + \sum \limits _{l\ge t} \sum \limits _{m> m_l} 2^{(\mu_*(1‐\lambda)‐\alpha_*\lambda)k_*l}\cdot 2^{‐s_*(1‐\lambda)m}\stackrel{(\ref{ml_def_emb})}{\underset{\mathfrak{Z}_0}{\lesssim}}
$$
$$
\lesssim \sum \limits _{l\ge t} 2^{\mu_*k_*l}\cdot 2^{‐m_l(s_*+1/q‐1/p_1)} \stackrel{(\ref{nu4}), (\ref{ml_def_emb})}{\underset{\mathfrak{Z}_0}{\lesssim}} 2^{‐\nu_*k_*t}.
$$
This completes the proof.
\end{proof}

{\bf Estimates for the widths.}

We denote
\begin{align}
\label{tilde_theta_def}
\tilde \theta = s_*\frac{\alpha_*+\gamma_*/p_0‐\gamma_*/q}{\mu_*+\alpha_*+\gamma_*(s_*+1/p_0‐1/p_1)},
\end{align}
\begin{align}
\label{hat_theta_def}
\hat \theta = \frac{\mu_*(1/q‐1/p_0)+\alpha_*(s_*+1/q‐1/p_1)}{\mu_*+\alpha_*+\gamma_*(s_*+1/p_0‐1/p_1)},
\end{align}
\begin{align}
\label{hat_nu_def}
\hat\nu = \frac 12\cdot \frac{\alpha_*(1/p_1‐1/q)+\mu_*(1/p_0‐1/q)}{(\mu_*+\alpha_*)(1/2‐1/q)+\gamma_*(1/p_1‐1/p_0)/q},
\end{align}
\begin{align}
\label{tilde_nu_def}
\tilde \nu = \frac{\mu_*(1/p_0‐1/2)+\alpha_*(1/p_1‐1/2)}{\gamma_*(1/p_1‐1/p_0)}+\frac 12‐\frac 1q.
\end{align}

We define the numbers $j_0$ and $\theta_j$ $(1\le j\le j_0)$ as follows.

First we consider the case $s_*+\frac{1}{\max\{p_0, \, q\}} ‐\frac{1}{p_1}>0$ or $\min \{p_0, \, p_1\}\ge q$.

\begin{Not}
\label{not2} Let $s_*+\frac{1}{\max\{p_0, \, q\}} ‐\frac{1}{p_1}>0$, $\mu_*+\alpha_*\le 0$, $\mu_* +\alpha_* +\gamma_*/p_0 ‐\gamma_*/p_1\le 0$; if $\gamma_*=0$, we suppose that $\mu_*<0$ and set $‐\mu_*/\gamma_*=+\infty$.
\begin{itemize}
\item If $p_1\ge q$ or $q\le 2$, we set $j_0=2$, $\theta_1=s_*‐\left(\frac{1}{p_1}‐\frac 1q\right)_+$, $\theta_2 = ‐\frac{\mu_*}{\gamma_*} ‐ \left(\frac 1q ‐ \frac{1}{p_1}\right)_+$. 

\item If $p_1<q$, $q>2$, we set $j_0=4$, $\theta_1 = s_*+\min \left\{ 0, \, \frac 12‐\frac{1}{p_1}\right\}$, $\theta_2 =\frac q2 \left(s_*+\frac 1q‐\frac{1}{p_1}\right)$, $\theta_3 = ‐\frac{\mu_*}{\gamma_*} +\min\left\{\frac 12‐\frac 1q, \, \frac{1}{p_1}‐\frac 1q\right\}$, $\theta_4 = ‐\frac{q\mu_*}{2\gamma_*}$.
\end{itemize}
\end{Not}

\begin{Not}
\label{not3} Let $p_0\ge q$, $p_1\ge q$; if $\mu_*+\alpha_*+\gamma_*/p_0‐\gamma_*/p_1\le 0$, $\gamma_*=0$, we suppose that $\mu_*<0$ and set $‐\mu_*/\gamma_*=+\infty$. Then $j_0=2$, $\theta_1=s_*$, $$\theta_2=\left\{\begin{array}{l}\tilde \theta \quad \text{if }\mu_*+\alpha_*+\gamma_*/p_0‐\gamma_*/p_1>0, \\ ‐\frac{\mu_*}{\gamma_*} ‐\frac 1q +\frac{1}{p_1} \quad \text{if }\mu_*+\alpha_*+\gamma_*/p_0‐\gamma_*/p_1\le 0.\end{array}\right.$$
\end{Not}

\begin{Not}
\label{not4} Let $s_*+\frac{1}{\max\{p_0, \, q\}} ‐\frac{1}{p_1}>0$, $\mu_*+\alpha_*< 0$, $\mu_*+\alpha_*+\gamma_*/p_0‐\gamma_*/p_1>0$.
\begin{enumerate}
\item Let $p_0<p_1< q$. 
\begin{itemize}
\item If $q\le 2$, then $j_0=2$, $\theta_1= s_*+\frac 1q ‐\frac{1}{p_1}$, $\theta_2 = ‐\frac{\mu_*}{\gamma_*}$.

\item If $q>2$, $p_1\le 2$, then $j_0=4$, $\theta_1 = s_*+\frac 12‐\frac{1}{p_1}$, $\theta_2 = \frac q2\left(s_*+\frac 1q ‐ \frac{1}{p_1}\right)$, $\theta_3 = ‐\frac{\mu_*}{\gamma_*} +\frac 12 ‐ \frac 1q$, $\theta_4 = ‐\frac{q\mu_*}{2\gamma_*}$.

\item If $q>2$, $p_0\ge 2$, then $j_0=5$, $\theta_1=s_*$, $\theta_2 = \frac q2\left(s_*+\frac 1q ‐ \frac{1}{p_1}\right)$, $\theta_3=\tilde \theta$, $\theta_4 = ‐\frac{q\mu_*}{2\gamma_*}$, $\theta_5 = \hat \nu$.

\item If $q>2$, $p_0<2<p_1$, then $j_0=6$, $\theta_1=s_*$, $\theta_2 = \frac q2\left(s_*+\frac 1q ‐ \frac{1}{p_1}\right)$, $\theta_3=\tilde \theta$, $\theta_4 = ‐\frac{q\mu_*}{2\gamma_*}$, $\theta_5 = \hat \nu$, $\theta_6 = \tilde \nu$.

\end{itemize}

\item Let $p_0<q<p_1$.
\begin{itemize}
\item If $q\le 2$, then $j_0=3$, $\theta_1=s_*$, $\theta_2 = \tilde\theta$, $\theta_3 = \frac{\alpha_*(1/q‐1/p_1)+\mu_*(1/q‐1/p_0)}{\gamma_*(1/p_0‐1/p_1)}$.

\item If $q>2$, $p_0\ge 2$, then $j_0=3$, $\theta_1=s_*$, $\theta_2 = \tilde\theta$, $\theta_3=\hat \nu$.

\item If $q>2$, $p_0<2$, then $j_0=4$, $\theta_1=s_*$, $\theta_2 = \tilde\theta$, $\theta_3=\hat \nu$, $\theta_4=\tilde \nu$.
\end{itemize}

\end{enumerate}
\end{Not}

\begin{Not}
\label{not5} Let $s_*+\frac{1}{\max\{p_0, \, q\}} ‐\frac{1}{p_1}> 0$, $\mu_*+\alpha_*> 0$, $\mu_*+\alpha_*+\gamma_*/p_0‐\gamma_*/p_1< 0$.

\begin{enumerate}
\item Let $p_1<p_0<q$.
\begin{itemize}
\item If $q\le 2$, then $j_0=2$, $\theta_1= s_*+\frac 1q ‐\frac{1}{p_1}$, $\theta_2 = \hat \theta$.

\item If $q>2$, $p_0\le 2$, then $j_0=4$, $\theta_1 = s_*+\frac 12‐\frac{1}{p_1}$, $\theta_2 = \frac q2\left(s_*+\frac 1q ‐ \frac{1}{p_1}\right)$, $\theta_3 = \hat \theta+\frac 12 ‐ \frac 1q$, $\theta_4 = \frac{q\hat \theta}{2}$.

\item If $q>2$, $p_1\ge 2$, then $j_0=5$, $\theta_1=s_*$, $\theta_2 = \frac q2\left(s_*+\frac 1q ‐ \frac{1}{p_1}\right)$, $\theta_3=\frac{q\hat \theta}{2}$, $\theta_4 = ‐\frac{\mu_*}{\gamma_*} ‐\frac 1q+\frac{1}{p_1}$, $\theta_5 = \hat \nu$.

\item If $q>2$, $p_1<2<p_0$, then $j_0=6$, $\theta_1=s_*+\frac 12 ‐\frac{1}{p_1}$, $\theta_2 = \frac q2\left(s_*+\frac 1q ‐ \frac{1}{p_1}\right)$, $\theta_3=\hat \theta +\frac 12 ‐\frac 1q$, $\theta_4=\frac{q\hat \theta}{2}$, $\theta_5 = \hat \nu$, $\theta_6 = \tilde \nu$.

\end{itemize}
\item Let $p_1<q<p_0$.
\begin{itemize}
\item If $q\le 2$, then $j_0=3$, $\theta_1=s_*+\frac 1q‐\frac{1}{p_1}$, $\theta_2 = \hat\theta$, $\theta_3 = \frac{\alpha_*(1/p_1‐1/q)+\mu_*(1/p_0‐1/q)}{\gamma_*(1/p_1‐1/p_0)}$.

\item If $q>2$, $p_1\ge 2$, then $j_0=5$, $\theta_1=s_*$, $\theta_2 = \frac q2\left(s_*+\frac 1q‐\frac{1}{p_1}\right)$, $\theta_3 = \frac{q\hat\theta}{2}$, $\theta_4 = ‐\frac{\mu_*}{\gamma_*} ‐\frac 1q +\frac{1}{p_1}$, $\theta_5=\hat \nu$.

\item If $q>2$, $p_1<2$, then $j_0=6$, $\theta_1=s_* +\frac 12‐\frac{1}{p_1}$, $\theta_2 = \frac q2\left(s_*+\frac 1q‐\frac{1}{p_1}\right)$, $\theta_3=\hat \theta+\frac 12‐\frac 1q$, $\theta_4 = \frac{q\hat\theta}{2}$,  $\theta_5=\hat \nu$, $\theta_6=\tilde \nu$.
\end{itemize}

\end{enumerate}
\end{Not}

Now we consider the case $p_0>q>p_1$, $s_*+\frac{1}{p_0} ‐ \frac{1}{p_1}< 0$. We set
\begin{align}
\label{hat_sigma_def}
\hat \sigma = s_*\cdot \frac{\frac 1q ‐ \frac{1}{p_0}}{‐s_*‐\frac{1}{p_0}+\frac{1}{p_1}+\frac{2s_*}{q}}.
\end{align}

\begin{Not}
\label{not6} Let $p_0>q>p_1$, $s_*+\frac{1}{p_0} ‐ \frac{1}{p_1}< 0$. 
\begin{enumerate}
\item Let $\mu_*+\alpha_*+\gamma_*/p_0‐\gamma_*/p_1\ge 0$.
\begin{itemize}
\item If $q\le 2$, then $j_0=2$, $\theta_1 = s_*\frac{1/q‐1/p_0}{1/p_1‐1/p_0}$, $\theta_2=\tilde \theta$.

\item If $q>2$, $p_1\le 2$, then $j_0=2$, $\theta_1 = \hat \sigma$, $\theta_2=\tilde \theta$.

\item If $q>2$, $p_1> 2$, then $j_0=3$, $\theta_1=s_*$, $\theta_2 = \hat \sigma$, $\theta_3=\tilde \theta$.
\end{itemize}

\item Let $\mu_*+\alpha_*+\gamma_*/p_0‐\gamma_*/p_1< 0$, $\mu_*+\alpha_*> 0$.
\begin{itemize}
\item If $q\le 2$, then $j_0=2$, $\theta_1 = s_*\frac{1/q‐1/p_0}{1/p_1‐1/p_0}$, $\theta_2=\frac{\alpha_*(1/p_1‐1/q)+\mu_*(1/p_0‐1/q)}{\gamma_*(1/p_1‐1/p_0)}$.

\item If $q>2$, $p_1\le 2$, then $j_0=2$, $\theta_1=\hat \sigma$, $\theta_2 = \hat \nu$.

\item If $q>2$, $p_1> 2$, then $j_0=4$, $\theta_1=s_*$, $\theta_2=\hat \sigma$, $\theta_3 = \hat \nu$, $\theta_4 = ‐\frac{\mu_*}{\gamma_*}‐\frac 1q +\frac{1}{p_1}$.

\end{itemize}

\item Let $\mu_*+\alpha_*< 0$, $s_*+1/q‐1/p_1<0$.
\begin{itemize}
\item If $q\le 2$, then $j_0=2$, $\theta_1 = s_*\frac{1/q‐1/p_0}{1/p_1‐1/p_0}$, $\theta_2=\hat \theta$.

\item If $q>2$, then $j_0=2$, $\theta_1=\hat \sigma$, $\theta_2 = \frac{q\hat \theta}{2}$.

\end{itemize}

\item Let $\mu_*+\alpha_*< 0$, $s_*+1/q‐1/p_1>0$; if $\gamma_*=0$, we suppose that $\mu_*<0$ and set $‐\mu_*/\gamma_*=+\infty$.
\begin{itemize}
\item If $q\le 2$, then $j_0=3$, $\theta_1 = s_*\frac{1/q‐1/p_0}{1/p_1‐1/p_0}$, $\theta_2=\hat \theta$, $\theta_3 = ‐\frac{\mu_*}{\gamma_*}$.

\item If $q>2$, $p_1\le 2$, then $j_0=3$, $\theta_1=\hat \sigma$, $\theta_2 = ‐\frac{q\mu_*}{2\gamma_*}$, $\theta_3 = \frac{q\hat \theta}{2}$.

\item If $q>2$, $p_1> 2$, then $j_0=5$, $\theta_1=s_*$, $\theta_2=\hat \sigma$, $\theta_3 = ‐\frac{q\mu_*}{2\gamma_*}$, $\theta_4 = \frac{q\hat \theta}{2}$, $\theta_5 = ‐\frac{\mu_*}{\gamma_*}‐\frac 1q+\frac{1}{p_1}$.
\end{itemize}

\end{enumerate}
\end{Not}

\begin{Trm}
\label{main} Let $j_0$, $\theta_j$ $(1\le j\le j_0)$ be defined by Notations \ref{not2}‐‐\ref{not6}.
Suppose that there exists $j_*\in \{1, \, \dots, \, j_0\}$ such that $\theta_{j_*}<\min _{j\ne j_*}\theta_j$, and $\theta_{j_*}>0$. Then 
$$
d_n(M, \, Y_q(\Omega)) \underset{\mathfrak{Z}_0}{\asymp} n^{‐\theta_{j_*}}.
$$
\end{Trm}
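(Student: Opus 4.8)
The plan is to combine the discretization machinery (Lemma~\ref{discr_lemma_low}, Lemma~\ref{discr_lemma}, Proposition~\ref{discr_up1}) with the finite-dimensional estimates of Theorem~\ref{dn_inters}, Theorem~\ref{glus}, Theorem~\ref{p_s}, together with the embedding estimate of Proposition~\ref{p0_ge_q}. Each of the quantities $\theta_j$ appearing in Notations~\ref{not2}--\ref{not6} should correspond to the exponent arising from one particular ``regime'' in the sum $\sum_{t}\sum_m d_{k_{t,m}}(W_{t,m},l_q^{\nu_{t,m}})$ (plus the $s_t$-terms in Proposition~\ref{discr_up1} and the tail term $\sup_{f\in M}\|f\|_{Y_q(\tilde\Omega_{\hat t(n)+1})}$, which is controlled by $2^{-\nu_*k_*t}$ via Proposition~\ref{p0_ge_q}). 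The geometric structure is the standard one for such width problems: as $t$ and $m$ vary, the body $W_{t,m}$ — an intersection of two scaled $\ell_{p_0}$- and $\ell_{p_1}$-balls in dimension $\nu_{t,m}\asymp 2^{\gamma_*k_*t}2^m$ — passes through several phases (reduction to $B_{p_1}$, to $B_{p_0}$, to $B_2$, or to $B_q$, and within each of those, through the ``trivial'' and ``nontrivial'' ranges of Theorem~\ref{glus}/Theorem~\ref{p_s}), and $\theta_{j_*}$ is the minimal decay rate, i.e.\ the bottleneck.

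For the \textbf{upper bound} I would: (i) fix $n$, choose $\hat t(n)$ so that $2^{\gamma_*k_*\hat t(n)}\asymp n$ (up to the right power), so that the tail $\sup_{f\in M}\|f\|_{Y_q(\tilde\Omega_{\hat t(n)+1})}\lesssim 2^{-\nu_*k_*\hat t(n)}\lesssim n^{-\theta_{j_*}}$ — this requires checking $\nu_*\ge \theta_{j_*}\cdot(\text{appropriate factor})$, which holds because $\nu_*$ is (by inspection of (\ref{nu1})--(\ref{nu4})) one of the candidate rates, hence $\ge\theta_{j_*}$; (ii) allocate the budget $k_{t,m}$ (and, when (\ref{pef1}) is unavailable, $s_t$) across the pairs $(t,m)$ — the natural choice makes $d_{k_{t,m}}(W_{t,m},l_q^{\nu_{t,m}})$ decay geometrically in $m$ for fixed $t$ and geometrically in $t$, with the slowest geometric ratio dictated by $\theta_{j_*}$; (iii) bound each summand using the explicit formulas of Theorem~\ref{dn_inters} reduced further by Theorem~\ref{glus} and Theorem~\ref{p_s}, and sum the geometric series, getting $\lesssim n^{-\theta_{j_*}}$. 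The precise allocation and case analysis will depend on which of Notations~\ref{not2}--\ref{not6} we are in, and this is where the bulk of the computation lies.

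For the \textbf{lower bound} I would use Lemma~\ref{discr_lemma_low}: $d_n(M,Y_q(\Omega))\gtrsim d_n(W_{t,m},l_q^{\nu_{t,m}})$ for every admissible $(t,m)$. The claim $\theta_{j_*}<\min_{j\ne j_*}\theta_j$ means the bottleneck regime is strictly dominant, so I would pick $(t,m)=(t(n),m(n))$ sitting exactly in that regime with $\nu_{t,m}\asymp n$ (or $\asymp n\cdot(\text{const})$, enough to apply Theorem~\ref{dn_inters} with $n\le N/2$), and then invoke the matching lower estimate in Theorem~\ref{dn_inters} (whichever of its seven cases applies) together with the lower bounds in Theorem~\ref{glus}/Theorem~\ref{p_s}, obtaining $d_n(W_{t(n),m(n)},l_q^{\nu_{t(n),m(n)}})\gtrsim n^{-\theta_{j_*}}$. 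Strict dominance of $\theta_{j_*}$ is what guarantees that a single well-chosen $(t,m)$ already realizes the worst rate, so no summation is needed from below.

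The \textbf{main obstacle} is the bookkeeping: matching each $\theta_j$ in Notations~\ref{not2}--\ref{not6} to a concrete sub-case of Theorem~\ref{dn_inters} (and within it to the ``$n^{1/2}N^{-1/q}\lessgtr\kappa$'' dichotomy and to the trivial/nontrivial ranges of Theorem~\ref{glus}), verifying that the budget allocation in the upper bound is feasible (i.e.\ the $k_{t,m}$ sum to $\le Cn$ while each width summand is $\lesssim n^{-\theta_{j_*}}$ with a summable geometric factor), and confirming in the lower bound that the chosen $(t,m)$ genuinely lies in regime $j_*$ and satisfies the dimension constraint $n\le\nu_{t,m}/2$. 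The role of the hypothesis $s_*+\frac{1}{\max\{p_0,q\}}-\frac1{p_1}>0$ or $\min\{p_0,p_1\}\ge q$ (and of $\theta_{j_*}>0$) is to ensure compactness, so that the tail term and the $m\to\infty$ series actually converge; this is exactly why Proposition~\ref{p0_ge_q} with $\nu_*>0$ is invoked. I expect the proof to proceed by a (long but routine) enumeration of the cases in Notations~\ref{not2}--\ref{not6}, each reduced to an application of the already-established lemmas.
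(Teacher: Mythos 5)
Your proposal is correct and follows essentially the same route as the paper: upper bound via the budget allocation in Lemma~\ref{discr_lemma} (with the tail controlled by Proposition~\ref{p0_ge_q} and each summand reduced through Theorems~\ref{dn_inters}, \ref{glus}, \ref{p_s}), lower bound via Lemma~\ref{discr_lemma_low} at a single $(t,m)$ in the dominant regime, with the strict-dominance hypothesis ensuring the sum is comparable to its largest term. The paper's actual work is precisely the case-by-case bookkeeping you defer (partitioning the $(t,m)$-region into polygonal subdomains, evaluating the resulting two-parameter geometric progressions at their vertices to produce the exponents $\theta_j$ of Notations~\ref{not2}--\ref{not6}), but your outline of the mechanism is accurate.
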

\begin{proof}
We define the numbers $\hat m_t$, $\overline{m}_t$, $\tilde m_t$, $m_t$, $m_t'\in \R$ by
\begin{gather}
\label{hat_mt} 2^{\gamma_*k_*t}\cdot 2^{\hat m_t}=n,
\\
\label{line_mt} 2^{\gamma_*k_*t}\cdot 2^{\overline{m}_t}=n^{q/2},
\\
\label{til_mt_t} 2^{\tilde m_t s_*} = 2^{(\mu_* + \alpha_*
+\gamma_*/p_0 -\gamma_*/p_1)k_*t},
\\
\label{mt_t} 2^{m_t(s_*+1/p_0-1/p_1)} = 2^{(\mu_*+\alpha_*) k_*t},
\end{gather}
$$
2^{(\mu_*+\alpha_*)k_*t}\cdot
2^{-(s_*+1/p_0-1/p_1)m_t'}\left( n^{-1/2}\cdot
2^{\gamma_*k_*t/q}\cdot 2^{m_t'/q}\right)
^{\frac{1/p_1-1/p_0}{1/2-1/q}} =1
$$
(the numbers $\overline{m}_t$ and $m'_t$ will be used only for $q>2$).

The numbers $\lambda$ and $\tilde \lambda$ are given by (\ref{lam_til_lam}).

The following equations hold:
\begin{align}
\label{mt_pr_eq} \begin{array}{c} 2^{\mu_*k_*t}\cdot 2^{‐m_t'(s_*+1/q‐1/p_1)}\left(n^{‐1/2}2^{\gamma_*k_*t/q}\cdot 2^{m_t'/q}\right)^{\frac{1/p_1‐1/q}{1/2‐1/q}} = \\ =2^{‐\alpha_*k_*t}\cdot 2^{‐m_t'(1/q‐1/p_0)}\left(n^{‐1/2}2^{\gamma_*k_*t/q}\cdot 2^{m_t'/q}\right)^{\frac{1/p_0‐1/q}{1/2‐1/q}} = \\ = 2^{(\mu_*(1‐\lambda)‐\alpha_*\lambda)k_*t}\cdot 2^{‐m_t'((s_*+1/q‐1/p_1)(1‐\lambda)+(1/q‐1/p_0)\lambda)} = \\ = 2^{(\mu_*(1‐\tilde\lambda)‐\alpha_*\tilde\lambda)k_*t}\cdot 2^{‐m_t'((s_*+1/q‐1/p_1)(1‐\tilde\lambda)+(1/q‐1/p_0)\tilde\lambda)}n^{‐1/2}2^{\gamma_*k_*t/q}\cdot 2^{m_t'/q};\end{array}
\end{align}
\begin{align}
\label{mt_pr_mt_mthatline} \text{if} \quad m_t' = \overline{m}_t, \; \text{then} \; m_t'=m_t; \quad \text{if} \quad m_t' = \hat m_t, \; \text{then} \; m_t'=\tilde{m}_t;
\end{align}
if $m_t=\hat m_t$, then
\begin{align}
\label{mt_hatmt_eq} 2^{\mu_*k_*t}\cdot 2^{‐m_t(s_*+1/q‐1/p_1)} = 2^{‐\alpha_*k_*t}\cdot 2^{‐m_t(1/q‐1/p_0)} \stackrel{\eqref{hat_theta_def}}{=} n^{‐\hat \theta};
\end{align}
if $m_t=\overline{m}_t$, then
\begin{align}
\label{mt_linemt_eq} 2^{\mu_*k_*t}\cdot 2^{‐m_t(s_*+1/q‐1/p_1)} = 2^{‐\alpha_*k_*t}\cdot 2^{‐m_t(1/q‐1/p_0)} \stackrel{\eqref{hat_theta_def}}{=} n^{‐q\hat \theta/2};
\end{align}
if $\tilde m_t=\hat m_t$, then
\begin{align}
\label{tilmt_hatmt_eq} 2^{\mu_*k_*t}\cdot 2^{‐\tilde m_t(s_*+1/q‐1/p_1)}n^{1/q‐1/p_1} = 2^{‐\alpha_*k_*t}\cdot 2^{‐\tilde m_t(1/q‐1/p_0)}n^{1/q‐1/p_0} \stackrel{\eqref{tilde_theta_def}}{=} n^{‐\tilde \theta};
\end{align}
if $m_t'=0$, then
\begin{align}
\label{mtpr0eq} 2^{(\mu_*(1‐\lambda)‐\alpha_*\lambda)k_*t} \stackrel{\eqref{hat_nu_def}}{=} n^{‐\hat \nu};
\end{align}
if $\hat m_t=0$, then
\begin{align}
\label{mthat0eq} 2^{(\mu_*(1‐\tilde\lambda)‐\alpha_*\tilde\lambda)k_*t}\cdot n^{‐\frac 12+\frac 1q} \stackrel{\eqref{tilde_nu_def}}{=} n^{‐\tilde \nu}.
\end{align}
From (\ref{s_1_lam}) it follows that for $p_0>q$, $s_*+\frac{1}{p_0} ‐\frac{1}{p_1}< 0$ we have
\begin{align}
\label{hat_sigma_m0pr} 2^{‐m_0'((1‐\lambda)(s_*+1/q‐1/p_1)+\lambda(1/q‐1/p_0))} = 2^{‐m_0'(1‐\lambda)s_*} \stackrel{(\ref{hat_sigma_def})}{=} n^{‐\hat \sigma}.
\end{align}

\begin{Sta}
\label{m_pr_t0_hat_mt} Let $q>2$, $\hat m_{t_1}=0$, $m'_{t_2}=0$, $\overline{m}_{t_3}=0$. Suppose that one of the following conditions hold:
\begin{enumerate}
\item $p_0<p_1$, $\mu_*+\alpha_*<0<\mu_* +\alpha_* +\frac{\gamma_*}{p_0} ‐\frac{\gamma_*}{p_1}$;
\item $p_0>p_1$, $\mu_* +\alpha_* +\frac{\gamma_*}{p_0} ‐\frac{\gamma_*}{p_1} <0<\mu_*+\alpha_*$.
\end{enumerate}
Then $t_1<t_2<t_3$.
\end{Sta}
\begin{proof}
We have
$$
2^{\gamma_*k_*t_1}=n, \quad 2^{\left(2(\mu_* + \alpha_*)\frac{1/2‐1/q}{1/p_1‐1/p_0}+\frac{2\gamma_*}{q}\right)k_*t_2}=n, \quad 2^{\frac{2\gamma_*k_*t_3}{q}}=n.
$$
In both cases $\frac{\mu_* + \alpha_*}{1/p_1‐1/p_0}>0$; therefore, $(\mu_* + \alpha_*)\frac{1/2‐1/q}{1/p_1‐1/p_0}+\frac{\gamma_*}{q}>\frac{\gamma_*}{q}$ and $t_2<t_3$.

The inequality $t_1<t_2$ is equivalent to $\frac{(\mu_*+\alpha_*)(1/2‐1/q)}{1/p_1‐1/p_0} +\frac{\gamma_*}{q}<\frac{\gamma_*}{2}$, or 
$$
\frac{\mu_*+\alpha_*}{1/p_1‐1/p_0}<\gamma_*.
$$
If $p_0<p_1$, it is equivalent to $\mu_*+\alpha_* +\frac{\gamma_*}{p_0}‐\frac{\gamma_*}{p_1}>0$; if $p_0>p_1$, it is equivalent to $\mu_*+\alpha_* +\frac{\gamma_*}{p_0}‐\frac{\gamma_*}{p_1}<0$.
\end{proof}

The upper estimates for the widths are obtained as in \cite{vas_inters}. Here we write the sketch of the proof.

According to $\mathfrak{Z}_0$, we choose the numbers $\hat t(n)$ and $t_*(n)\in [0, \, \hat t(n)]$ (here $2^{k_*\hat t(n)}$ is a positive degree of $n$, $2^{\gamma_*k_*\hat t(n)} \le n ^{\max\{1, \, q/2\}}$); also we choose a sufficiently small $\varepsilon>0$ (we will write later how to choose these numbers). We set 
\begin{align}
\label{m_t_star}
m_t^* = \max\{\hat m_t‐\varepsilon|t‐t_*(n)|, \, 0\}.
\end{align}

If $q\le 2$ or $\min \{p_0, \, p_1\}\ge q$, we set $k_{t,m}=\nu_{t,m}$ for $0\le m< m_t^*$, $t\le \hat t(n)$, and $k_{t,m}=0$ for $m\ge m_t^*$, $t\le \hat t(n)$.  Then
\begin{align}
\label{s_ktm_le_n}
\begin{array}{c}
\sum \limits _{0\le t\le \hat t(n)} \sum \limits _{m\ge 0} k_{t,m} = \sum \limits _{0\le t\le \hat t(n)} \sum \limits _{0\le m<m_t^*} \nu_{t,m} \stackrel{(\ref{nu_t_m_defin}), (\ref{m_t_star})}{\underset{\mathfrak{Z}_0}{\lesssim}} \sum \limits _{0\le t\le \hat t(n)} 2^{\gamma_*k_*t}\cdot 2^{\hat m_t}\cdot 2^{‐\varepsilon|t‐t_*(n)|} \stackrel{(\ref{hat_mt})}{=}
\\
= \sum \limits _{0\le t\le \hat t(n)} n\cdot 2^{‐\varepsilon|t‐t_*(n)|} \underset{\varepsilon}{\lesssim} n.
\end{array}
\end{align}

Let $q>2$ and $\min \{p_0, \, p_1\}<q$. We choose $m_*(n)\in [(\hat m_{t_*(n)})_+, \, \overline{m}_{t_*(n)}]$ according to $\mathfrak{Z}_0$ and set $k_{t,m}=\nu_{t,m}$ for $0\le m< m_t^*$, $k_{t,m} =\lfloor n\cdot 2^{‐\varepsilon(|t‐t_*(n)|+|m‐m_*(n)|)}\rfloor$ for $0\le t\le \hat t(n)$, $m_t^*\le m\le \overline{m}_t$, $k_{t,m}=0$ for $m>\overline{m}_t$. As above, we get
\begin{align}
\label{s_ktm_le_n0}
\sum \limits _{0\le t\le \hat t(n)} \sum \limits _{m\ge 0} k_{t,m} \underset{\mathfrak{Z}_0,\varepsilon}{\lesssim} n.
\end{align}

Then we apply the following method. Consider the domain
$$
A=A(n)=\{(t,\, m):\; 0\le t\le \hat t(n), \; m\ge (\hat{m}_t)_+\}.
$$
According to Theorems \ref{dn_inters}, \ref{glus}, \ref{p_s}, we write the order estimates of $d_n(W_{t,m}, \, l_q^{\nu_{t,m}})$ for $(t, \, m)\in A\cap \Z_+^2$ (from (\ref{nu_t_m_defin}) and (\ref{hat_mt}) it follows that $\nu_{t,m}\ge 2n$). We obtain the partition of $A$ into polygonal subdomains $A_i=A_i(n)$, $1\le i\le i_0$; for $(t, \, m)\in A_i$, one of the following estimates holds:
\begin{align}
\label{dlwtm_p1}
d_n(W_{t,m}, \, l_q^{\nu_{t,m}}) \underset{\mathfrak{Z}_0}{\asymp} 2^{\mu_*k_*t}\cdot 2^{‐m(s_*+1/q‐1/p_1)} d_n(B_{p_1}^{\nu_{t,m}}, \, l_q^{\nu_{t,m}}),
\end{align}
\begin{align}
\label{dlwtm_p0}
d_n(W_{t,m}, \, l_q^{\nu_{t,m}}) \underset{\mathfrak{Z}_0}{\asymp} 2^{‐\alpha_*k_*t}\cdot 2^{‐m(1/q‐1/p_0)} d_n(B_{p_0}^{\nu_{t,m}}, \, l_q^{\nu_{t,m}}),
\end{align}
\begin{align}
\label{dlwtm_2}
\begin{array}{c}
d_n(W_{t,m}, \, l_q^{\nu_{t,m}}) \underset{\mathfrak{Z}_0}{\asymp} \\ \asymp 2^{((1‐\tilde \lambda)\mu_*‐\tilde \lambda\alpha_*)k_*t}\cdot 2^{‐m((1‐\tilde \lambda)(s_* +1/q‐1/p_1)+\tilde \lambda(1/q‐1/p_0))} d_n(B_2^{\nu_{t,m}}, \, l_q^{\nu_{t,m}}),
\end{array}
\end{align}
\begin{align}
\label{dlwtm_q} d_n(W_{t,m}, \, l_q^{\nu_{t,m}}) \underset{\mathfrak{Z}_0}{\asymp} 2^{((1‐\lambda)\mu_*‐\lambda\alpha_*)k_*t}\cdot 2^{‐m(1‐\lambda)s_*},
\end{align}
where $\lambda$, $\tilde \lambda$ are defined by (\ref{lam_til_lam}).
Taking into account the estimates of  $d_n(B_s^{\nu_{t,m}}, \, l_q^{\nu_{t,m}})$, we get that for $(t, \, m)\in A_i$ the following estimate holds: 
\begin{align}
\label{dnwtm_phi}
d_n(W_{t,m}, \, l_q^{\nu_{t,m}}) \underset{\mathfrak{Z}_0}{\asymp} \varphi_i(t, \, m, \, n) := 2^{\kappa_{1,i}t+\kappa_{2,i}m}n^{\sigma_i}
\end{align}
(here $\kappa_{1,i}$, $\kappa_{2,i}$, $\sigma_i$ are real numbers); in addition, if $(t, \, m)\in A_i\cap A_j$, we have $\varphi_i(t, \, m, \, n) = \varphi_j(t, \, m, \, n)$. Let 
\begin{align}
\label{phi_tmn_def}
\varphi(t, \, m, \, n)=\varphi_i(t, \, m,\, n) \quad \text{for} \quad (t, \, m)\in A_i.
\end{align}

We estimate from above the sum
$$
S:= \sum \limits _{(t, \, m)\in A\cap \Z_+^2} \varphi(t, \, m, \, n) = \sum \limits _{i=1}^{i_0} \sum \limits _{(t, \, m)\in A_i\cap \Z_+^2} \varphi _i(t, \, m, \, n).
$$
Suppose that on each unbounded $A_i$ we have the progression which strictly decreases with $m$. We calculate $\varphi_i(t, \, m, \, n)$ in the vertices of $A_i$, $1\le i\le i_0$. Suppose that they have the form $n^{‐\beta_j}$, $1\le j\le k$, where $\beta_j=\beta_j(\mathfrak{Z}_0)$. If there exists $j_*\in \{1, \, \dots, \, k\}$ such that $\beta _{j_*}< \min _{j\ne j_*} \beta _j$, then
\begin{align}
\label{s_z0_asymp_phi}
S \underset{\mathfrak{Z}_0}{\asymp} n^{‐\beta_{j_*}} = \varphi(t_n, \, m_n, \, n); 
\end{align}
here $(t_n, \, m_n)$ is a vertex of $A_i=A_i(n)$ for some $i$.

Now we set $t_*(n)=t_n$, $m_*(n)=m_n$. Consider the domain 
\begin{align}
\label{a_epsilon}
A^\varepsilon=A^\varepsilon(n)=\{(t,\, m):\; 0\le t\le \hat t(n), \; m\ge m^*_t\}
\end{align}
and its partition into subdomains $A_i^\varepsilon=A_i^\varepsilon(n)\supset A_i$, $1\le i\le i_0$: if $$(\partial A_i)\cap \{(t, \, (\hat m_t)_+):\; t\ge 0\}=\{(t, \, (\hat m_t)_+):\; t_{i,1}\le t\le t_{i,2}\}\ne \varnothing,$$ then $$A_i^\varepsilon = A_i\cup \{(t, \, m):\; t_{i,1}\le t\le t_{i,2}, \; m_t^*\le m\le (\hat m_t)_+\};$$ 
otherwise, $A_i^\varepsilon = A_i$.

For $(t, \, m)\in A_i^\varepsilon$, we estimate $d_{k_{t,m}}(W_{t,m}, \, l_q^{\nu_{t,m}})$ from above as follows: if for $(t, \, m)\in A_i$ we get (\ref{dlwtm_p1}), (\ref{dlwtm_p0}), (\ref{dlwtm_2}) or (\ref{dlwtm_q}), then we write, respectively,
$$
d_{k_{t,m}}(W_{t,m}, \, l_q^{\nu_{t,m}})\underset{\mathfrak{Z}_0}{\lesssim} 2^{\mu_*k_*t}\cdot 2^{‐m(s_*+1/q‐1/p_1)} d_{k_{t,m}}(B_{p_1}^{\nu_{t,m}}, \, l_q^{\nu_{t,m}}),
$$
$$
d_{k_{t,m}}(W_{t,m}, \, l_q^{\nu_{t,m}})\underset{\mathfrak{Z}_0}{\lesssim} 2^{‐\alpha_*k_*t}\cdot 2^{‐m(1/q‐1/p_0)} d_{k_{t,m}}(B_{p_0}^{\nu_{t,m}}, \, l_q^{\nu_{t,m}}),
$$
$$
d_{k_{t,m}}(W_{t,m}, \, l_q^{\nu_{t,m}})\underset{\mathfrak{Z}_0}{\lesssim} 2^{((1‐\tilde \lambda)\mu_*‐\tilde \lambda\alpha_*)k_*t}\cdot 2^{‐m((1‐\tilde \lambda)(s_* +1/q‐1/p_1)+\tilde \lambda(1/q‐1/p_0))}d_{k_{t,m}}(B_2^{\nu_{t,m}}, \, l_q^{\nu_{t,m}}),
$$
$$
d_{k_{t,m}}(W_{t,m}, \, l_q^{\nu_{t,m}}) \underset{\mathfrak{Z}_0}{\lesssim} 2^{((1‐\lambda)\mu_*‐\lambda\alpha_*)k_*t}\cdot 2^{‐m(1‐\lambda)s_*}.
$$

Notice that $2k_{t,m}\le \nu_{t,m}$ for $(t, \, m)\in A^\varepsilon$; indeed, we have $k_{t,m}=0$ or, by (\ref{nu_t_m_defin}), (\ref{hat_mt}), (\ref{m_t_star}), (\ref{a_epsilon}), we have
$$2n\cdot 2^{‐\varepsilon(|t‐t_*(n)|+|m‐m_*(n)|)} \le 2\cdot 2^{\gamma_*k_*t}\cdot 2^{\hat m_t‐\varepsilon|t‐t_*(n)|}.$$

By the definition of $k_{t,m}$, the inequality $2k_{t,m}\le \nu_{t,m}$ and Theorems \ref{glus}, \ref{p_s}, we get that for $(t, \, m)\in A_i^\varepsilon$ $$d_{k_{t,m}}(W_{t,m}, \, l_q^{\nu_{t,m}}) \underset{\mathfrak{Z}_0}{\lesssim}\varphi_i(t, \, m, \, n)\cdot 2^{c_1\varepsilon(|t‐t_*(n)|+|m‐m_*(n)|)},$$ where $c_1=c_1(\mathfrak{Z}_0)$.

If $\varepsilon>0$ is sufficiently small (we choose it according to $\mathfrak{Z}_0$), then
\begin{align}
\label{sum_dktm_wtm_beta}
\sum \limits _{(t, \, m)\in A^\varepsilon \cap \Z_+^2} d_{k_{t,m}}(W_{t,m}, \, l_q^{\nu_{t,m}})\underset{\mathfrak{Z}_0}{\lesssim} n^{‐\beta_*}.
\end{align}

Now we estimate
$\sup _{f\in M} \|f\|_{Y_q(\tilde\Omega_{[\hat t(n)]})}$ according to Proposition \ref{p0_ge_q}. Let $$\sup _{f\in M} \|f\|_{Y_q(\tilde\Omega_{[\hat t(n)]})}\underset{\mathfrak{Z}_0}{\lesssim} n^{‐\beta_j}$$ for some $j\in \{1, \, \dots, \, k\}$. We apply  (\ref{up_est}), (\ref{s_ktm_le_n}), (\ref{s_ktm_le_n0}), (\ref{sum_dktm_wtm_beta}), take into account that $d_{k_{t,m}}(W_{t,m}, \, l_q^{\nu_{t,m}})=0$ for $m<m_t^*$ and get: there is $C=C(\mathfrak{Z}_0)\in \N$ such that
$$
d_{Cn}(M, \, Y_q(\Omega))\underset{\mathfrak{Z}_0}{\lesssim} n^{‐\beta_{j_*}}+n^{‐\beta_j}\underset{\mathfrak{Z}_0}{\lesssim} n^{‐\beta_{j_*}}.
$$
It implies that
$$
d_{n}(M, \, Y_q(\Omega))\underset{\mathfrak{Z}_0}{\lesssim} n^{‐\beta_{j_*}}.
$$

In order to get the lower estimate, we apply (\ref{low_est}). Taking into account that $\nu_{t,m}\ge 2n$ for $(t, \, m)\in A$, we have
$$
d_n(M, \, Y_q(\Omega)) \underset{\mathfrak{Z}_0}{\gtrsim} d_{n}(W_{\lceil t_n\rceil,\lceil m_n\rceil}, \, l_q^{\nu_{\lceil t_n\rceil,\lceil m_n\rceil}}) \stackrel{(\ref{dnwtm_phi}), (\ref{phi_tmn_def})}{\underset{\mathfrak{Z}_0}{\asymp}} \varphi(t_n, \, m_n, \, n) \stackrel{(\ref{s_z0_asymp_phi})}{=} n^{‐\beta_{j_*}}.
$$

Now we consider the cases. We write:
\begin{enumerate}
\item $\hat t(n)$ and the estimate for $\sup _{f\in M}\|f\|_{Y_q(\tilde\Omega_{[\hat t(n)]})}$;
\item the polygonal subdomains $A_i$, $1\le i\le i_0$;
\item the estimates (\ref{dlwtm_p1})‐‐(\ref{dlwtm_q}) for each $A_i$;
\item notice if the progression $\varphi_i(t, \, m, \, n)$ strictly increases or decreases with $m$ or $t$ (if we can see it immediately);
\item the vertices of $A_i$ in which it is sufficient to calculate $\varphi(t, \, m, \, n)$;
\item the values of $\varphi(t, \, m, \, n)$ in these vertices.
\end{enumerate}

First we suppose that $s_*+\frac{1}{\max\{p_0,\, q\}} ‐\frac{1}{p_1}>0$ or $\min\{p_0, \, p_1\}\ge q$. Then from (\ref{sved_k_p1}), (\ref{sved_k_p0}), (\ref{til_mt_t}), (\ref{mt_t}) it follows that for $m\ge \max\{m_t, \, \tilde m_t\}$ we have (\ref{dlwtm_p1}), and for $m\le \min\{m_t, \, \tilde m_t\}$ we have (\ref{dlwtm_p0}).

{\bf Case 1.} Let $\mu_*+\alpha_*\le 0$, $\mu_*+\alpha_*+ \frac{\gamma_*}{p_0}‐\frac{\gamma_*}{p_1}\le 0$. By (\ref{til_mt_t}), (\ref{mt_t}), we have $m_t\le 0$, $\tilde m_t\le 0$ for all $t\ge 0$. Therefore, for all $m\in \Z_+$ we get (\ref{dlwtm_p1}).

First we consider $\gamma_*>0$.

If $p_1\ge q$ or $q\le 2$, then we define $\hat t(n)$ by $\hat m_{\hat t(n)} = 0$. Then 
\begin{align}
\label{supfmmugamma}
\sup _{f\in M}\|f\|_{Y_q(\tilde \Omega _{[\hat t(n)]})} \stackrel{(\ref{nu1}), (\ref{emb_nu}), (\ref{hat_mt})}{\underset{\mathfrak{Z}_0}{\lesssim}} n^{\mu_*/\gamma_*+(1/q‐1/p_1)_+}.
\end{align}
For all $(t, \, m)\in A$ we have (\ref{dlwtm_p1}):
\begin{align}
\label{dnwtm1ql2}
d_n(W_{t,m}, \, l_q^{\nu_{t,m}})\stackrel{(\ref{nu_t_m_defin})}{\underset{\mathfrak{Z}_0}{\asymp}} 2^{\mu_*k_*t}\cdot 2^{‐m(s_*+1/q‐1/p_1)}\cdot 2^{\gamma_*k_*t(1/q‐1/p_1)_+}\cdot 2^{m(1/q‐1/p_1)_+};
\end{align} 
the right‐hand side strictly decreases with $m$. The polygonal domain $A$ has two vertices: $(0, \, \hat m_0)$ and $(\hat t(n), \, 0)$. We substitute these points into (\ref{dnwtm1ql2}) and get $n^{‐s_*+(1/p_1‐1/q)_+}$, $n^{\mu_*/\gamma_*+(1/q‐1/p_1)_+}$. This together with (\ref{supfmmugamma}) yields the order estimates for the widths (see Notation \ref{not2}).

Let $p_1<q$, $q>2$. We define the number $\hat t(n)$ by $\overline{m}_{\hat t(n)} =0$. Then \begin{align}
\label{sup_f_m_qmu}
\sup _{f\in M}\|f\|_{Y_q(\tilde \Omega _{[\hat t(n)]})} \stackrel{(\ref{nu1}), (\ref{emb_nu}), (\ref{line_mt})}{\underset{\mathfrak{Z}_0}{\lesssim}} n^{q\mu_*/2\gamma_*}.
\end{align}
The domain $A$ is divided into two subdomains: $$A_1 = \{(t, \, m):\; 0\le t\le \hat t(n), \; (\hat{m}_t)_+ \le m\le \overline{m}_t\},$$ 
$$A_2 = \{(t, \, m):\; 0\le t\le \hat t(n), \;m\ge \overline{m}_t\}$$ 
(see Fig. 1).

\begin{figure}[h]
\vspace*{-3mm}
\begin{center} \resizebox{70mm}{!}
{\includegraphics{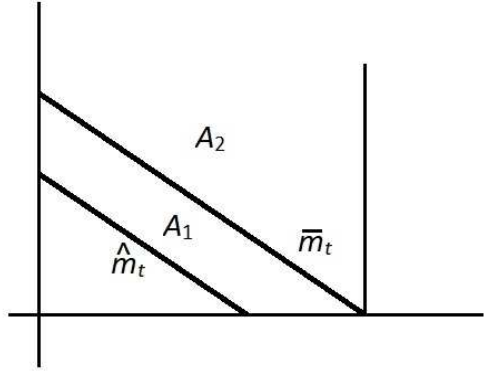}} \vspace*{-3mm}

\parbox[t]{60mm}{\caption{The partition of $A$.}\label{figure1}}
\end{center}
\vspace*{-4mm}
\end{figure}

In both subdomains (\ref{dlwtm_p1}) holds; in $A_2$ the progression strictly decreases with $m$. The vertices of $A_i$ are as follows: $(0, \, \hat m_0)$, $(0, \, \overline{m}_0)$, $(t_1(n), \, 0)$, $(\hat t(n), \, 0)$,  where $t_1(n)$ is defined by $\hat m_{t_1(n)}=0$. We substitute these points into 
\begin{align}
\label{2muktn12}
2^{\mu_*k_*t}\cdot 2^{‐m(s_*+1/q‐1/p_1)}\cdot (n^{‐\frac 12}2^{\frac{\gamma_*k_*t}{q}}2^{\frac mq})^{\lambda _{p_1q}}
\end{align}
(see Theorem \ref{glus}) and get $$n^{‐s_*‐\min(0, \, 1/2‐1/p_1)}, \; n^{‐\frac{q}{2}(s_*+1/q‐1/p_1)}, \; n^{\mu_*/\gamma_*‐\min\{1/p_1‐1/q, \, 1/2‐1/q\}}, \; n^{q\mu_*/2\gamma_*}.$$ By (\ref{sup_f_m_qmu}), we get the desired estimate for the widths (see Notation \ref{not2}).

If $\gamma_*=0$, we choose the number $\hat t(n)$ such that $2^{k_*\hat t(n)}$ is a sufficiently large degree of $n$, and argue as above. Notice that by $\mu_*<0$ the right‐hand side of (\ref{dnwtm1ql2}) and (\ref{2muktn12}) strictly decreases with $t$; hence it is sufficient to substitute $(0, \, \hat m_0)$ into (\ref{dnwtm1ql2}) for $q\le 2$, and $(0, \, \hat m_0)$, $(0, \, \overline{m}_0)$ into (\ref{2muktn12}) for $q>2$.

{\bf Case 2.} Let $p_0\ge q$, $p_1\ge q$. Then for $m\ge \tilde m_t$ we have (\ref{dlwtm_p1}), and for $m\le \tilde m_t$ we have (\ref{dlwtm_p0}). Indeed, let $p_1\le p_0$. Then $m_t\ge \tilde m_t$. If $m\ge m_t$, then (\ref{dlwtm_p1}) follows from (\ref{sved_k_p1}); if $m\le \tilde m_t$, then  (\ref{dlwtm_p0}) follows from (\ref{sved_k_p0}). If $\tilde m_t\le m\le m_t$, by assertion 7 of Theorem \ref{dn_inters} we get (\ref{dlwtm_p1}). Similarly we can consider the case $p_1\ge p_0$.

If $\mu_*+\alpha_*+\frac{\gamma_*}{p_0} ‐\frac{\gamma_*}{p_1}\le 0$, then $\tilde m_t\le 0$. Therefore, (\ref{dlwtm_p1}) holds for all $m\in \Z_+$; in addition, we have (\ref{emb_nu}), where $\nu_*$ is defined by (\ref{nu1}). Hence the widths can be estimated as in Case 1. 

Let $\mu_*+\alpha_*+\frac{\gamma_*}{p_0} ‐\frac{\gamma_*}{p_1}> 0$. Then $\tilde m_t\ge 0$ for $t\ge 0$. We define the number $\hat t(n)$ by $\hat m_{\hat t(n)} = \tilde m_{\hat t(n)}$. Then
$$
A=\{(t, \, m):\; 0\le t\le \hat t(n), \; m\ge \hat m_t\}.
$$
By (\ref{nu3}), (\ref{emb_nu}), (\ref{tilde_theta_def}), (\ref{hat_mt}), (\ref{til_mt_t}), we get
\begin{align}
\label{9tiltheta_est}
\sup _{f\in M}\|f\|_{Y_q(\tilde \Omega _{[\hat t(n)]})} \underset{\mathfrak{Z}_0}{\lesssim} n^{‐\tilde \theta}.
\end{align}

For all $(t, \, m)\in A$ we have (\ref{dlwtm_p1}); the right‐hand side is as follows:
\begin{align}
\label{r_side}2^{\mu_*k_*t}2^{‐m(s_*+1/q‐1/p_1)}2^{(\gamma_*k_*t+m)(1/q‐1/p_1)};
\end{align}
it strictly decreases with $m$. Hence it is sufficient to calculsate (\ref{r_side}) in
$(0, \, \hat m_0)$ and $(\hat t(n), \, \hat m_{\hat t(n)})$. Taking into account (\ref{hat_mt}) and (\ref{tilmt_hatmt_eq}), we get $n^{‐s_*}$ and $n^{‐\tilde \theta}$. This together with (\ref{9tiltheta_est}) yields the desired estimate for $d_n(M, \, Y_q(\Omega))$ (see Notation \ref{not3}).

{\bf Case 3.} Let $\mu_*+\alpha_*< 0$, $\mu_*+\alpha_*+\frac{\gamma_*}{p_0}‐\frac{\gamma_*}{p_1}>0$, $p_0<p_1<q$. Then $\gamma_*>0$, $m_t\le 0$, $\tilde m_t\ge 0$ for $t\ge 0$. In addition, (\ref{emb_nu}) holds with $\nu_*$ defined by (\ref{nu1}).

We apply Theorem \ref{dn_inters} (since $p_0<p_1$, we rearrange 0 and 1).

If $q\le 2$ or $p_1\le 2$, we get (\ref{dlwtm_p1}) for $m\ge m_t$ (see assertion 1 of Theorem \ref{dn_inters}). Hence this estimate holds for all $m\ge 0$. Further we argue as in Case 1.

Let $p_1>2$, $q>2$. The number $\hat t(n)$ is defined by $\overline{m}_{\hat t(n)} = 0$. Then
\begin{align}
\label{sup_fm_qmu2}
\sup _{f\in M}\|f\|_{Y_q(\tilde \Omega _{[\hat t(n)]})} \stackrel{(\ref{nu1}), (\ref{emb_nu}), (\ref{line_mt})}{\underset{\mathfrak{Z}_0}{\lesssim}} n^{q\mu_*/2\gamma_*}.
\end{align}

We define the points $t_1(n)$, $t_2(n)$, $t_3(n)$ by $\hat m_{t_1(n)}=\tilde m_{t_1(n)}$, $m'_{t_2(n)}=0$, $\hat m_{t_3(n)}=0$. By Proposition \ref{m_pr_t0_hat_mt}, we have $t_3(n)<t_2(n)<\hat t(n)$.

Taking into account (\ref{mt_pr_mt_mthatline}), we get that $A$ is divided into
$$
A_1=\{(t, \, m):\; 0\le t\le \hat t(n), \; (\hat m_t)_+\le m\le \overline{m}_t, \; m\ge m_t'\},
$$
$$
A_2=\{(t, \, m):\; 0\le t\le \hat t(n), \; m\ge \overline{m}_t\},
$$
$$
A_3=\{(t, \, m):\; 0\le t\le \hat t(n), \; (\hat m_t)_+\le m\le m_t'\}
$$
(see Fig. 2).

\begin{figure}[h]
\vspace*{-3mm}
\begin{center} \resizebox{70mm}{!}
{\includegraphics{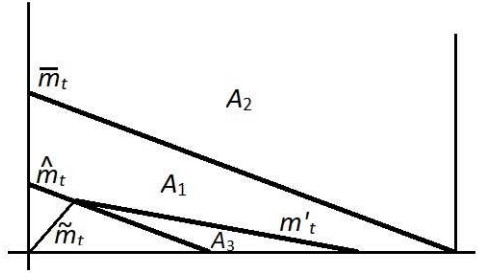}} \vspace*{-3mm}

\parbox[t]{60mm}{\caption{The partition of $A$.}\label{figure2}}
\end{center}
\vspace*{-4mm}
\end{figure}

For $(t, \, m)\in A_1\cup A_2$ we get (\ref{dlwtm_p1}) by (\ref{sved_k_p0}) and assertions 2, 3 of Theorem \ref{dn_inters} (recall that we rearrange 0 and 1); for $(t, \, m)\in A_3$ we have (\ref{dlwtm_p0}) if $p_0\ge 2$, and (\ref{dlwtm_2}) if $p_0<2$. In $A_2$ the progression strictly decreases with $m$. If $p_0\ge 2$, then in $A_3$ the progression strictly increases with $m$.

Hence, for $p_0\ge 2$ it is sufficient to calculate the right‐hand side of (\ref{dlwtm_p1}) in $(0, \, \hat m_0)$, $(0, \, \overline{m}_0)$, $(\hat t(n), \, 0)$, $(t_1(n), \, \hat m_{t_1}(n))$, $(t_2(n), \, 0)$, and for $p_0<2$, the same values and the right‐hand side of (\ref{dlwtm_2}) in $(t_3(n), \, 0)$. Applying (\ref{hat_mt}), (\ref{line_mt}), (\ref{mt_pr_eq}), (\ref{tilmt_hatmt_eq}), (\ref{mtpr0eq}), (\ref{mthat0eq}), we get $n^{‐\theta_j}$, $1\le j\le j_0$ (see Notation \ref{not4}, case 1, subcase $q>2$, $2< p_1<q$). This together with (\ref{sup_fm_qmu2}) yields the desired estimate for the widths.

{\bf Case 4.} Let $\mu_*+\alpha_*< 0$, $\mu_*+\alpha_*+\frac{\gamma_*}{p_0}‐\frac{\gamma_*}{p_1}>0$, $p_0<q<p_1$. Again we get $\gamma_*>0$, $\tilde m_t\ge 0$, $m_t\le 0$ for $t\ge 0$.

Let $q\le 2$. We define the number $\hat t(n)$ by $\hat m_{\hat t(n)}=0$. 
Then
\begin{align}
\label{theta3lam}
\sup _{f\in M}\|f\|_{Y_q(\tilde \Omega _{[\hat t(n)]})} \stackrel{(\ref{nu2}), (\ref{emb_nu})}{\underset{\mathfrak{Z}_0}{\lesssim}} 2^{((1‐\lambda)\mu_*‐\lambda \alpha_*)k_*\hat t(n)} \stackrel{(\ref{hat_mt})}{=} n^{((1‐\lambda)\mu_*‐\lambda \alpha_*)/\gamma_*} = n^{‐\theta _3}
\end{align}
(see Notation \ref{not4}, case 2, subcase $q\le 2$).

We define the number $t_1(n)$ by $\hat m_{t_1(n)} = \tilde m_{t_1(n)}$.

The domain $A$ is divided into
$$
A_1=\{(t, \, m):\; 0\le t\le \hat t(n), \; m\ge (\hat m_t)_+, \; m\ge \tilde m_t\},
$$
$$
A_2=\{(t, \, m):\; 0\le t\le \hat t(n), \; (\hat m_t)_+\le m\le \tilde m_t\}.
$$

\begin{figure}[h]
\vspace*{-3mm}
\begin{center} \resizebox{50mm}{!}
{\includegraphics{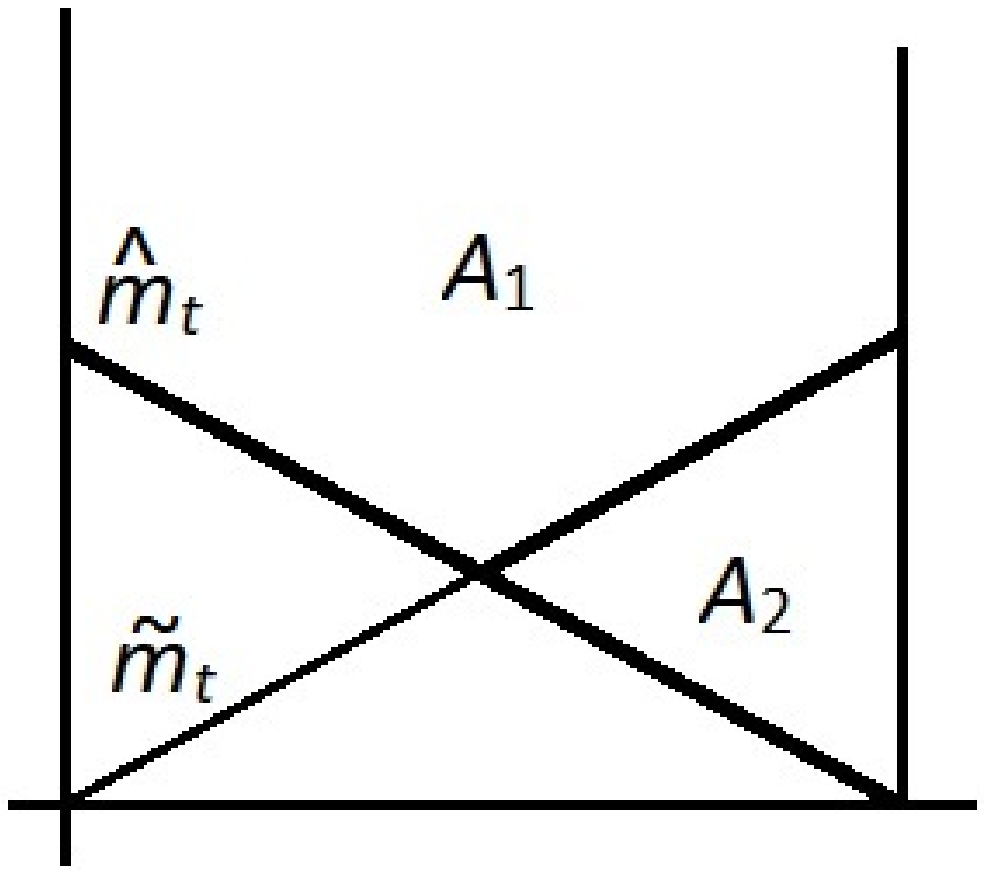}} \vspace*{-3mm}

\parbox[t]{60mm}{\caption{The partition of $A$.}\label{figure3}}
\end{center}
\vspace*{-4mm}
\end{figure}

By (\ref{sved_k_p0}) and assertion 6 of Theorem \ref{dn_inters} (with rearranged 0 and 1), we get (\ref{dlwtm_p1}) in $A_1$, and (\ref{dlwtm_q}) in $A_2$. Everywhere the progression strictly decreases with $m$. Hence it is sufficient to calculate $2^{\mu_*k_*t}\cdot 2^{‐m(s_*+1/q‐1/p_1)}\cdot (2^{\gamma_*k_*t}\cdot 2^m)^{1/q‐1/p_1}$ in $(0, \, \hat m_0)$ and $(t_1(n), \, \hat m_{t_1(n)})$, and $2^{((1‐\lambda)\mu_*‐\lambda\alpha_*)k_*t}\cdot 2^{‐m(1‐\lambda)s_*}$ in $(\hat t(n), \, 0)$. Applying (\ref{hat_mt}) and (\ref{tilmt_hatmt_eq}), we get $n^{‐\theta_j}$, $1\le j\le 3$ (see Notation \ref{not4}, case 2, subcase $q\le 2$). This together with (\ref{theta3lam}) yields the desired estimate for the widths.

Let $q>2$. We define $\hat t(n)$ by $m'_{\hat t(n)}=0$. Then $2^{\gamma_*k_*\hat t(n)}\le n^{q/2}$ (see Proposition \ref{m_pr_t0_hat_mt}),
\begin{align}
\label{9est_nu}
\sup _{f\in M}\|f\|_{Y_q(\tilde \Omega _{[\hat t(n)]})} \stackrel{(\ref{nu2}), (\ref{emb_nu})}{\underset{\mathfrak{Z}_0}{\lesssim}} 2^{((1‐\lambda)\mu_*‐\lambda \alpha_*)k_*\hat t(n)} \stackrel{(\ref{mtpr0eq})}{=} n^{‐\hat \nu}.
\end{align}

We define the numbers $t_1(n)$ and $t_2(n)$ by $\hat m_{t_1(n)}=\tilde m_{t_1(n)}$ and $\hat m_{t_2(n)}=0$.
By Proposition \ref{m_pr_t0_hat_mt}, $t_2(n)<\hat t(n)$.

Taking into account (\ref{mt_pr_mt_mthatline}), we get that $A$ is divided into
$$
A_1 = \{(t, \, m):\; 0\le t\le \hat t(n), \; m\ge \tilde m_t, \; m\ge (\hat m_t)_+\},
$$
$$
A_2 = \{(t, \, m):\; 0\le t\le \hat t(n), \; m'_t\le m\le \tilde m_t\},
$$
$$
A_3 = \{(t, \, m):\; 0\le t\le \hat t(n), \; (\hat m_t)_+\le m\le \tilde m_t, \; m\le m'_t\}
$$
(see Fig. 4).

\begin{figure}[h]
\vspace*{-3mm}
\begin{center} \resizebox{60mm}{!}
{\includegraphics{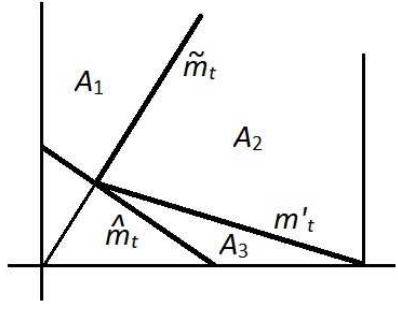}} \vspace*{-3mm}

\parbox[t]{60mm}{\caption{The partition of $A$.}\label{figure4}}
\end{center}
\vspace*{-4mm}
\end{figure}

We apply Theorem \ref{dn_inters} (assertions 4‐‐5 and (\ref{sved_k_p0})); since $p_0<p_1$, we rearrange 0 and 1. We get (\ref{dlwtm_p1}) in $A_1$ and (\ref{dlwtm_q}) in $A_2$. The progressions strictly decrease with $m$. If $p_0\ge 2$, we get (\ref{dlwtm_p0}) in $A_3$; the progression strictly increases with $m$. If $p_0<2$, we get (\ref{dlwtm_2}) in $A_3$.

For $p_0\ge 2$ it is sufficient to calculate $$2^{\mu_*k_*t}\cdot 2^{‐m(s_*+1/q‐1/p_1)}\cdot (2^{\gamma_*k_*t}\cdot 2^m)^{1/q‐1/p_1}$$ in $(0, \, \hat m_0)$, $(t_1(n), \, \hat m_{t_1(n)})$ and $2^{((1‐\lambda)\mu_*‐\lambda\alpha_*)k_*t}\cdot 2^{‐m(1‐\lambda)s_*}$ in $(\hat t(n), \, 0)$. For $p_0<2$, in addition, we calculate the right‐hand side of (\ref{dlwtm_2}) in $(t_2(n), \, 0)$. Applying (\ref{hat_mt}), (\ref{mt_pr_eq}), (\ref{tilmt_hatmt_eq}), (\ref{mtpr0eq}), (\ref{mthat0eq}), we get $n^{‐\theta_j}$, $1\le j\le j_0$ (see Notation \ref{not4}, case 2, subcase $q>2$). This together with (\ref{9est_nu}) yields the desired estimates.

{\bf Case 5.} Let $\mu_*+\alpha_*> 0$, $\mu_*+\alpha_*+\frac{\gamma_*}{p_0}‐\frac{\gamma_*}{p_1}< 0$, $p_1<p_0<q$. Then $\gamma_*>0$, $\tilde m_t\le 0$, $m_t\ge 0$ for $t\ge 0$.

If $q\le 2$, then we define the number $\hat t(n)$ by $\hat m_t(n)=m_t(n)$. From (\ref{nu4}), (\ref{emb_nu}), (\ref{hat_theta_def}), (\ref{hat_mt}), (\ref{mt_t}) we get
\begin{align}
\label{supfmthetahat}
\sup _{f\in M}\|f\|_{Y_q(\tilde \Omega _{[\hat t(n)]})} \underset{\mathfrak{Z}_0}{\lesssim} n^{‐\hat \theta}.
\end{align}
From (\ref{sved_k_p1}) it follows that (\ref{dlwtm_p1}) holds for all $(t, \, m)\in A$. The right‐hand side is equal to $2^{\mu_*k_*t}\cdot 2^{‐m(s_*+1/q‐1/p_1)}$ and strictly decreases with $m$. Therefore, it is sufficient to calculate this value in $(0, \, \hat m_0)$ and $(\hat t(n), \, \hat m_{\hat t(n)})$. Taking into account (\ref{hat_mt}) and (\ref{mt_hatmt_eq}), we get $n^{‐s_*‐1/q+1/p_1}$ and $n^{‐\hat\theta}$. This together with (\ref{supfmthetahat}) yields the desired estimate for the widths (see Notation \ref{not5}, case 1, subcase $q\le 2$).

Let $q>2$. We define the number $\hat t(n)$ by $\overline{m}_{\hat t(n)}=m_{\hat t(n)}$. From (\ref{nu4}), (\ref{emb_nu}), (\ref{hat_theta_def}), (\ref{line_mt}), (\ref{mt_t}) we get
\begin{align}
\label{qth2sup}
\sup _{f\in M}\|f\|_{Y_q(\tilde \Omega _{[\hat t(n)]})} \underset{\mathfrak{Z}_0}{\lesssim} n^{‐q\hat \theta/2}.
\end{align}

Let $p_0\le 2$. We define the number $t_1(n)$ by $\hat m_{t_1(n)} = m_{t_1(n)}$.

The domain $A$ is divided into
$$
A_1 = \{(t, \, m):\; 0\le t\le \hat t(n), \; (\hat m_t)_+\le m\le \overline{m}_t, \; m\ge m_t\},
$$
$$
A_2 = \{(t, \, m):\; 0\le t\le \hat t(n), \; m\ge \overline{m}_t\},
$$
$$
A_3 = \{(t, \, m):\; 0\le t\le \hat t(n), \; m\ge (\hat m_t)_+, \; m\le m_t\}
$$
(see Fig. 5).

\begin{figure}[h]
\vspace*{-3mm}
\begin{center} \resizebox{50mm}{!}
{\includegraphics{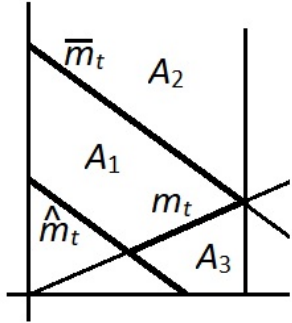}} \vspace*{-3mm}

\parbox[t]{60mm}{\caption{The partition of $A$.}\label{figure5}}
\end{center}
\vspace*{-4mm}
\end{figure}

From (\ref{sved_k_p1}) it follows that (\ref{dlwtm_p1}) holds for $(t, \, m)\in A_1\cup A_2$; in $A_2$ the progression strictly decreases with $m$; from assertion 1 of Theorem \ref{dn_inters} it follows that (\ref{dlwtm_p0}) holds in $A_3$ (the progression strictly increases with $m$). Hence it is sufficient to calculate $2^{\mu_*k_*t}\cdot 2^{‐m(s_*+1/q‐1/p_1)}\cdot n^{‐1/2}2^{\gamma_*k_*t/q}2^{m/q}$ in $(0, \, \hat m_0)$, $(0, \, \overline{m}_0)$, $(t_1(n), \, \hat m_{t_1(n)})$ and $(\hat t(n), \, \overline{m}_{\hat t(n)})$. Taking into account (\ref{hat_mt}), (\ref{line_mt}), (\ref{mt_hatmt_eq}) and (\ref{mt_linemt_eq}), we get $n^{‐\theta_j}$, $1\le j\le 4$ (see Notation \ref{not5}, case 1, subcase $q>2$, $p_0\le 2$).

Let $q> 2$, $p_0> 2$. 

If $p_1\ge 2$, we define the numbers $t_1(n)$ and $t_2(n)$ by $\hat m_{t_1(n)}=0$ and $m'_{t_2(n)}=0$. By Proposition \ref{m_pr_t0_hat_mt}, we have $t_1(n)<t_2(n)$.

The domain $A$ is divided into
$$
A_1 = \{(t, \, m):\; 0\le t\le \hat t(n), \; (\hat m_t)_+\le m\le \overline{m}_t; \; m\ge m'_t\text{ for }t_2(n)<t\le \hat t(n)\},
$$
$$
A_2 = \{(t, \, m):\; 0\le t\le \hat t(n), \; m\ge \overline{m}_t\},
$$
$$
A_3 = \left\{ \begin{array}{l}\{(t, \, m):\; 0\le t\le \hat t(n), \; 0\le m\le m'_t\} \quad \text{if }m'_t\uparrow\uparrow, \\ \varnothing \quad \text{otherwise}\end{array}\right.
$$
(see Fig. 6).

\begin{figure}[h]
\vspace*{-3mm}
\begin{center} \resizebox{50mm}{!}
{\includegraphics{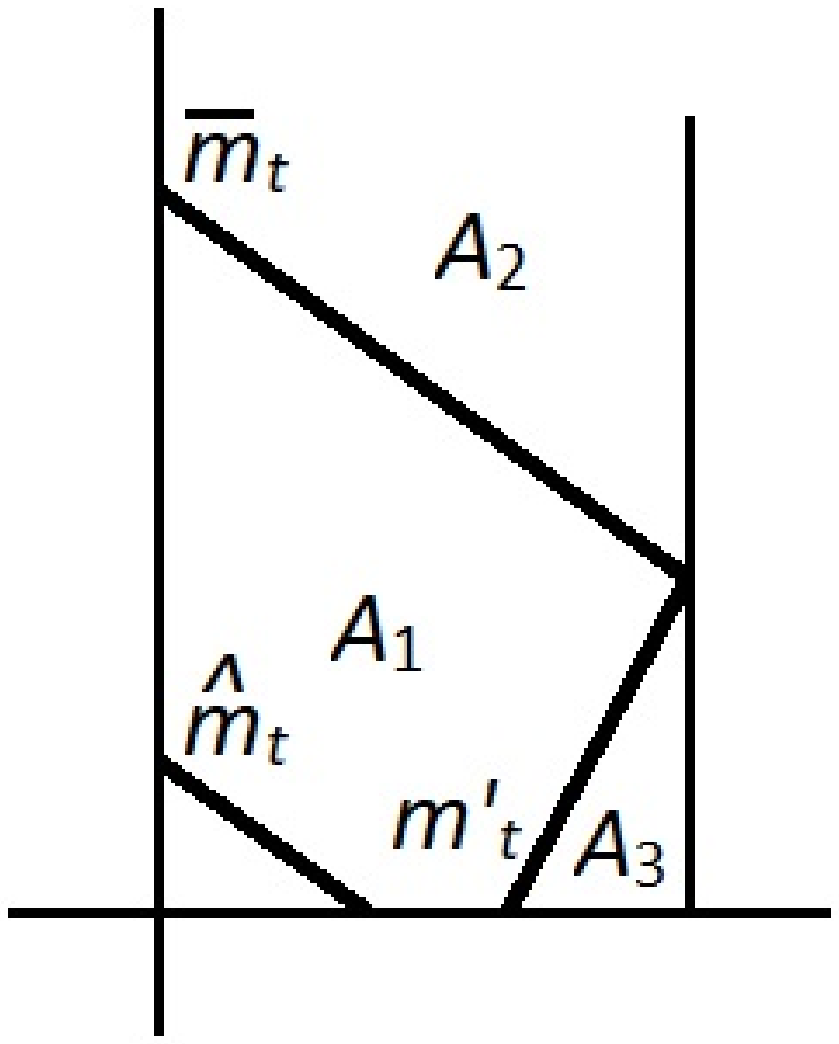}} \vspace*{-3mm}

\parbox[t]{70mm}{\caption{The partition of $A$.}\label{figure6}}
\end{center}
\vspace*{-4mm}
\end{figure}

By Theorem \ref{dn_inters} (see (\ref{sved_k_p1}) and assertion 3), (\ref{dlwtm_p1}) holds in $A_1\cup A_2$ (in $A_2$ the progression strictly decreases with $m$), (\ref{dlwtm_p0}) holds in $A_3$ (the progression strictly increases with $m$). Hence it is sufficient to calculate the right‐hand side of (\ref{dlwtm_p1}) in $(0, \, \hat m_0)$, $(0, \, \overline{m}_0)$, $(t_1(n), \, 0)$, $(t_2(n), \, 0)$ and $(\hat t(n), \, \overline{m}_{\hat t(n)})$. Taking into account (\ref{hat_mt}), (\ref{line_mt}), (\ref{mt_pr_eq}), (\ref{mt_linemt_eq}) and (\ref{mtpr0eq}), we get $n^{‐\theta_j}$, $1\le j\le 5$ (see Notation \ref{not5}, case 1, subcase $q>2$, $p_1\ge 2$).

Let $p_1<2$. We define the numbers $t_1(n)$, $t_2(n)$ and $t_3(n)$ by $m_{t_1(n)} = \hat m_{t_1(n)}$, $\hat m_{t_2(n)}=0$ and $m'_{t_3(n)}=0$. By Proposition \ref{m_pr_t0_hat_mt}, we have $t_2(n)<t_3(n)$.

The domain $A$ is divided into subsets
$$
A_1 = \{(t, \, m):\; 0\le t\le \hat t(n), \; (\hat m_t)_+\le m\le \overline{m}_t, \; m\ge m_t\},
$$
$$
A_2 = \{(t, \, m):\; 0\le t\le \hat t(n), \; m\ge \overline{m}_t\},
$$
$$
A_3 = \{(t, \, m):\; 0\le t\le \hat t(n), \; (\hat m_t)_+\le m\le m_t; \; m\ge m'_t \text{ for }t_2(n)<t\le \hat t(n)\},
$$
$$
A_4 = \left\{ \begin{array}{l}\{(t, \, m):\; 0\le t\le \hat t(n), \; 0\le m\le m'_t\} \quad \text{if }m'_t\uparrow\uparrow, \\ \varnothing \quad\text{otherwise}\end{array}\right.
$$
(see Fig. 7).

\begin{figure}[h]
\vspace*{-3mm}
\begin{center} \resizebox{50mm}{!}
{\includegraphics{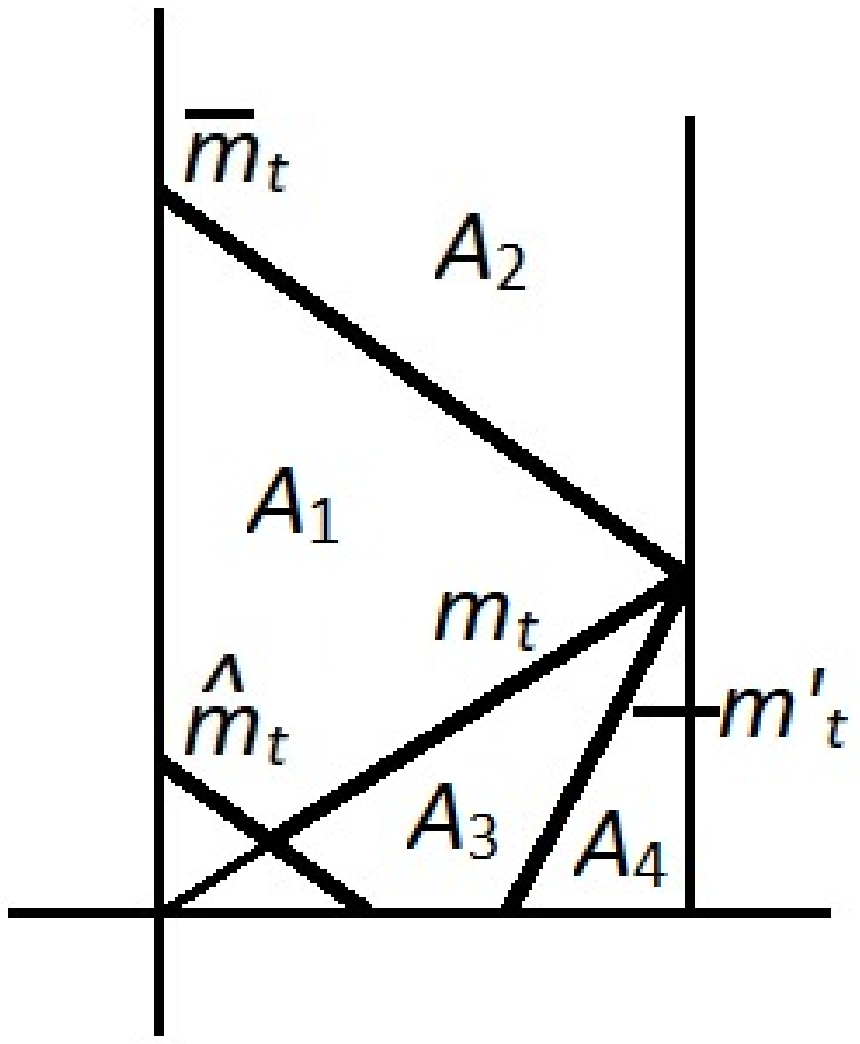}} \vspace*{-3mm}

\parbox[t]{60mm}{\caption{The partition of $A$.}\label{figure7}}
\end{center}
\vspace*{-4mm}
\end{figure}

From (\ref{sved_k_p1}) and assertion 2 of Theorem \ref{dn_inters} it follows that (\ref{dlwtm_p1}) holds in $A_1\cup A_2$ (in $A_2$ the progression strictly decreases with $m$), (\ref{dlwtm_2}) holds in $A_3$, (\ref{dlwtm_p0}) holds in $A_4$ (the progression strictly increases with $m$). Hence it is sufficient to calculate the right‐hand side of (\ref{dlwtm_p1}) in $(0, \, \hat m_0)$, $(0, \, \overline{m}_0)$, $(t_1(n), \, \hat m_{t_1(n)})$ and $(\hat t(n), \, \overline{m}_{\hat t(n)})$, and the right‐hand side of (\ref{dlwtm_2}) in $(t_2(n), \, 0)$ and $(t_3(n), 0)$. Taking into account (\ref{hat_mt}), (\ref{line_mt}), (\ref{mt_t}), (\ref{mt_pr_eq}), (\ref{mt_hatmt_eq}), (\ref{mt_linemt_eq}), (\ref{mtpr0eq}), (\ref{mthat0eq}), we get $n^{‐\theta_j}$, $1\le j\le 6$ (see Notation \ref{not5}, case 1, subcase $q>2$, $p_1< 2<p_0$).

This together with (\ref{qth2sup}) yields the desired estimate for the widths.

{\bf Case 6.} Let $\mu_*+\alpha_*> 0$, $\mu_*+\alpha_*+\frac{\gamma_*}{p_0}‐\frac{\gamma_*}{p_1}< 0$, $p_1<q<p_0$. Then $\gamma_*>0$, $\tilde m_t\le 0$, $m_t\ge 0$ for $t\ge 0$.

Let $q\le 2$. We define the number $\hat t(n)$ by $\hat m_{\hat t(n)}=0$. By (\ref{nu2}) and (\ref{emb_nu}), we have (\ref{theta3lam}); the number $\theta_3$ is the same. In addition, we define $t_1(n)$ by $m_{t_1(n)} = \hat m_{t_1(n)}$. The domain $A$ is divided into
$$
A_1 = \{(t, \, m):\; 0\le t\le \hat t(n), \; m\ge (\hat m_t)_+, \; m\ge m_t\},
$$
$$
A_2 = \{(t, \, m):\; 0\le t\le \hat t(n), \; m\ge (\hat m_t)_+, \; m\le m_t\}.
$$
By (\ref{sved_k_p1}) and assertion 6 of Theorem \ref{dn_inters}, (\ref{dlwtm_p1}) holds in $A_1$, (\ref{dlwtm_q}) holds in $A_2$ (in both domains the progression strictly decreases with $m$). Therefore it is sufficient to calculate $2^{\mu_*k_*t}\cdot 2^{‐m(s_*+1/q‐1/p_1)}$ in $(0, \, \hat m_0)$ and $(t_1(n), \, \hat m_{t_1(n)})$, and $2^{((1‐ \lambda)\mu_*‐ \lambda\alpha_*)k_*t}\cdot 2^{‐m(1‐\lambda)s_*}$ in $(\hat t(n), \, 0)$. Taking into account (\ref{hat_mt}) and (\ref{mt_hatmt_eq}), we get $n^{‐\theta_j}$, $1\le j\le 3$ (see Notation \ref{not5}, case 2, subcase $q\le 2$). This together with (\ref{theta3lam}) yields the desired estimate for the widths.

Let $q> 2$. We define the number $\hat t(n)$ by $m'_{\hat t(n)}=0$. Then (\ref{9est_nu}) holds. By Proposition \ref{m_pr_t0_hat_mt}, we have  $2^{\gamma_*k_*\hat t(n)}\le n^{q/2}$.

Let $p_1\ge 2$. We define the numbers $t_1(n)$ and $t_2(n)$ by $m_{t_1(n)} = \overline{m}_{t_1(n)}$, $\hat m_{t_2(n)}=0$. By Proposition \ref{m_pr_t0_hat_mt}, we have $t_2(n)<\hat t(n)$.

The domain $A$ is divided into
$$
A_1 = \{(t, \, m):\; 0\le t\le \hat t(n), \; (\hat m_t)_+\le m\le \overline{m}_t; \; m\le m'_t\text{ for }t_1(n)<t\le \hat t(n)\},
$$
$$
A_2 = \{(t, \, m):\; 0\le t\le \hat t(n), \; m\ge \overline{m}_t, \; m\ge m_t\},
$$
$$
A_3 = \left\{\begin{array}{l}\{(t, \, m):\;0\le t\le \hat t(n), \; m'_t\le m \le m_t\}, \quad \text{if }m'_t\downarrow\downarrow, \\ \varnothing, \quad \text{otherwise}\end{array}\right.
$$
(see Fig. 8).

\begin{figure}[h]
\vspace*{-3mm}
\begin{center} \resizebox{50mm}{!}
{\includegraphics{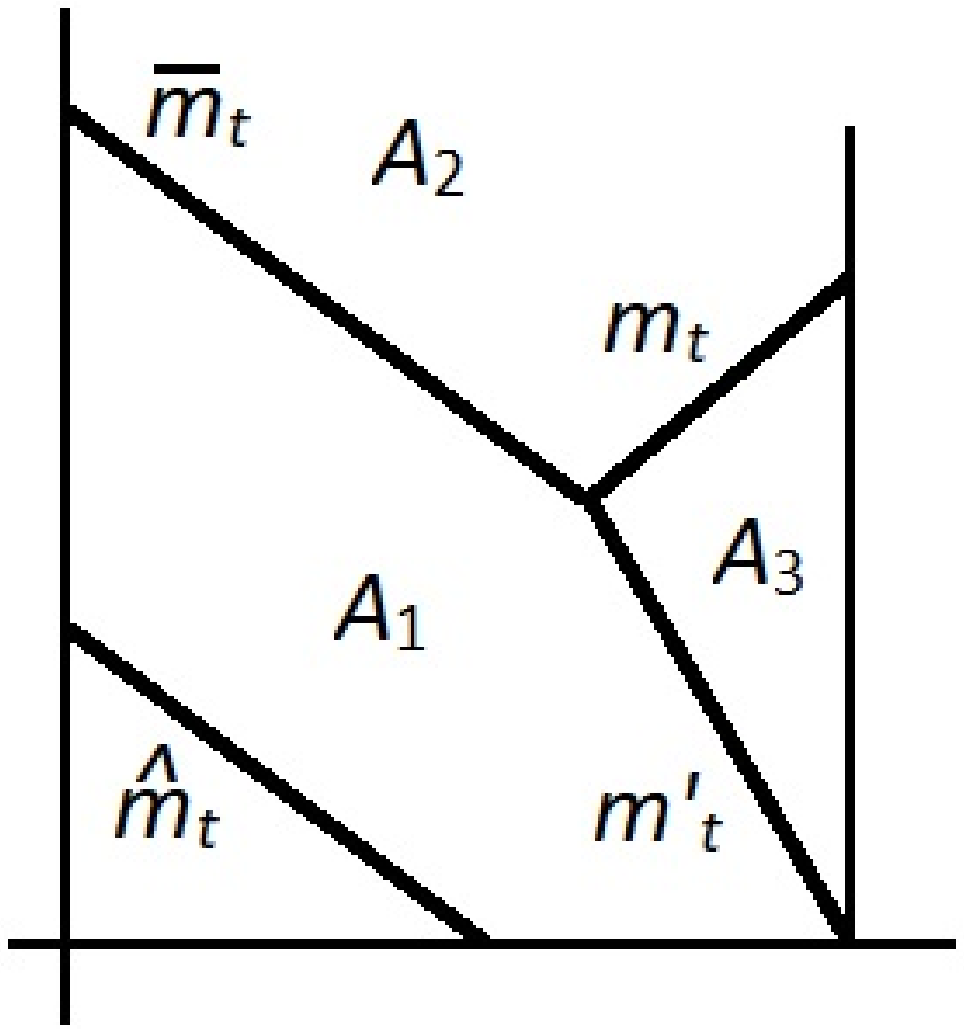}} \vspace*{-3mm}

\parbox[t]{60mm}{\caption{The partition of $A$.}\label{figure8}}
\end{center}
\vspace*{-4mm}
\end{figure}

By (\ref{sved_k_p1}) and assertion 4 of Theorem \ref{dn_inters}, (\ref{dlwtm_p1}) holds in $A_1\cup A_2$ (in $A_2$ the progression strictly decreases with $m$), (\ref{dlwtm_q}) holds in $A_3$ (the progression strictly decreases with $m$). Therefore it is sufficient to calculate the right‐hand side of (\ref{dlwtm_p1}) in $(0, \, \hat m_0)$, $(0, \overline{m}_0)$, $(t_1(n), \, \overline{m}_{t_1(n)})$, $(t_2(n), \, 0)$ and $(\hat t(n), \, 0)$. Taking into account (\ref{hat_mt}), (\ref{line_mt}), (\ref{mt_pr_eq}), (\ref{mt_linemt_eq}),  (\ref{mtpr0eq}), we get $n^{‐\theta_j}$, $1\le j\le 5$ (see Notation \ref{not5}, case 2, subcase $q> 2$, $p_1\ge 2$). This together with (\ref{9est_nu}) yields the desired estimate for the widths.

Let $p_1< 2$. We define the numbers $t_1(n)$, $t_2(n)$ and $t_3(n)$ by $m_{t_1(n)} = \overline{m}_{t_1(n)}$, $m_{t_2(n)} = \hat{m}_{t_2(n)}$, $\hat m_{t_3(n)}=0$. By Proposition \ref{m_pr_t0_hat_mt}, we have $t_3(n)<\hat t(n)$.

The domain $A$ is divided into
$$
A_1 = \{(t, \, m):\; 0\le t\le \hat t(n), \; (\hat m_t)_+\le m\le \overline{m}_t, \; m\ge m_t\},
$$
$$
A_2 = \{(t, \, m):\; 0\le t\le \hat t(n), \; m\ge \overline{m}_t, \; m\ge m_t\},
$$
$$
A_3 = \{(t, \, m):\; 0\le t\le \hat t(n), \; m\ge (\hat{m}_t)_+, \; m\le m_t; \; m\le m_t' \text{ for }t_1(n)<t\le \hat t(n)\},
$$
$$
A_4 = \left\{\begin{array}{l}\{(t, \, m):\; 0\le t\le \hat t(n), \; m'_t\le m\le m_t\} \quad \text{if }m'_t\downarrow\downarrow, \\ \varnothing \quad\text{otherwise} \end{array} \right.
$$
(see Fig. 9).

\begin{figure}[h]
\vspace*{-3mm}
\begin{center} \resizebox{50mm}{!}
{\includegraphics{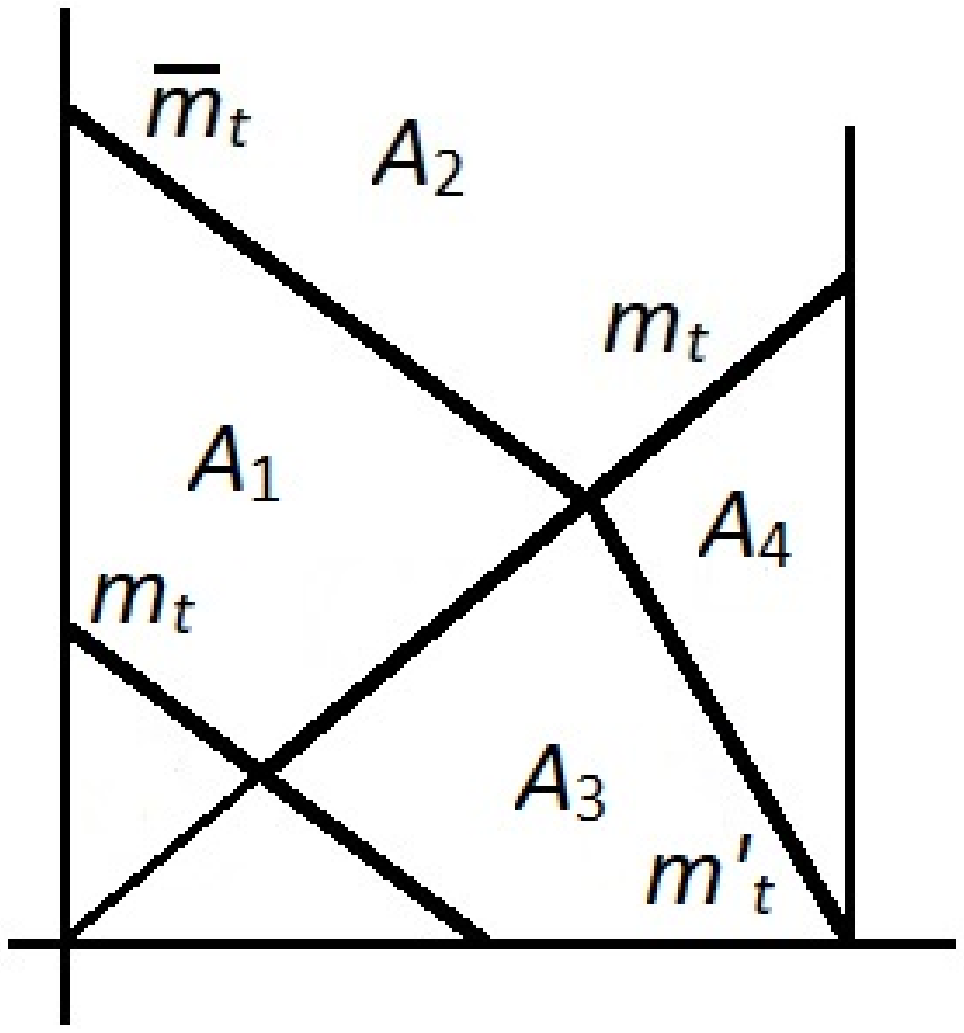}} \vspace*{-3mm}

\parbox[t]{60mm}{\caption{The partition of $A$.}\label{figure9}}
\end{center}
\vspace*{-4mm}
\end{figure}

By (\ref{sved_k_p1}) and assertion 5 of Theorem \ref{dn_inters}, (\ref{dlwtm_p1}) holds in $A_1\cup A_2$ (in $A_2$ the progression strictly decreases with $m$), (\ref{dlwtm_2}) holds in $A_3$, (\ref{dlwtm_q}) holds in $A_4$ (the progression strictly decreases with $m$). Therefore, it is sufficient to calculate the right‐hand side of (\ref{dlwtm_p1}) in $(0, \, \hat m_0)$, $(0, \overline{m}_0)$, $(t_1(n), \, \overline{m}_{t_1(n)})$, $(t_2(n), \, \hat m_{t_2(n)})$ and the right‐hand side of (\ref{dlwtm_2}) in $(t_3(n), \, 0)$ and $(\hat t(n), \, 0)$. Taking into account (\ref{hat_mt}), (\ref{line_mt}), (\ref{mt_pr_eq}), (\ref{mt_hatmt_eq}), (\ref{mt_linemt_eq}),  (\ref{mtpr0eq}), (\ref{mthat0eq}), we get $n^{‐\theta_j}$, $1\le j\le 6$ (see Notation \ref{not5}, case 2, subcase $q> 2$, $p_1< 2$).

This together with (\ref{9est_nu}) yields the desired estimate of the widths.

\vskip 0.3cm Now we consider the case $p_0>q>p_1$, $s_* +\frac{1}{p_0}‐\frac{1}{p_1}< 0$. Notice that if $q>2$, then by $s_*>0$ and $s_*+\frac{1}{p_0}‐\frac{1}{p_1}< 0$ we get 
\begin{align}
\label{2hm0m0provrmo}
2^{\hat m_0}=n<2^{m'_0}\le n^{q/2}=2^{\overline{m}_0}.\end{align}

Notice that 
\begin{gather}
\label{s30} 2^{(\mu_*+\alpha_*)k_*t}\cdot 2^{‐m(s_*+1/p_0‐1/p_1)}\ge 1 \; \Leftrightarrow \; m\ge m_t,
\\
\label{s31} 2^{(\mu_*+\alpha_*+\gamma_*/p_0‐\gamma_*/p_1)k_*t}\cdot 2^{‐ms_*}\le 1 \; \Leftrightarrow \; m\ge \tilde m_t,
\\
\label{s32} n^{1/2}(2^{\gamma_*k_*t}\cdot 2^m)^{‐1/q}\ge (2^{(\mu_*+\alpha_*)k_*t}\cdot 2^{‐m(s_*+1/p_0‐1/p_1)})^{\frac{1/2‐1/q}{1/p_1‐1/p_0}} \; \Leftrightarrow \; m\le m_t'.
\end{gather}
 
{\bf Case 1.} Let $\mu_*+\alpha_*+\gamma_*/p_0‐\gamma_*/p_1\ge 0$. Then $\tilde m_t\ge 0$, $m_t\le 0$ for $t\ge 0$. We define the number $\hat t(n)$ by $\hat m_{\hat t(n)} =\tilde m_{\hat t(n)}$. By (\ref{nu3}), (\ref{emb_nu}), (\ref{tilde_theta_def}), (\ref{hat_mt}), (\ref{til_mt_t}), we get (\ref{9tiltheta_est}).

Let $q\le 2$. If $(t, \, m)\in A$, we have $m\ge \tilde m_t$. By (\ref{s30}), (\ref{s31}) and assertion 6 of Theorem \ref{dn_inters}, we have (\ref{dlwtm_q}); the progression strictly decreases with $m$. Hence it is sufficient to calculate $2^{((1‐\lambda)\mu_*‐\lambda\alpha_*)k_*t}\cdot 2^{‐m(1‐\lambda)s_*}$ in $(0, \, \hat m_0)$ and $(\hat t(n), \, \hat m_{\hat t(n)})$. Applying (\ref{tilmt_hatmt_eq}), we get $n^{‐s_*\frac{1/q‐1/p_0}{1/p_1‐1/p_0}}$ and $n^{‐\tilde \theta}$.

Let $q> 2$. Taking into account (\ref{mt_pr_mt_mthatline}) and (\ref{2hm0m0provrmo}), we get that $A$ is divided into
$$
A_1=\{(t, \, m):\; 0\le t\le \hat t(n), \; m\ge m'_t\},
$$
$$
A_2=\{(t, \, m):\; 0\le t\le \hat t(n), \; (\hat m_t)_+\le m\le m'_t\}
$$
(see Fig. 10).

\begin{figure}[h]
\vspace*{-3mm}
\begin{center} \resizebox{50mm}{!}
{\includegraphics{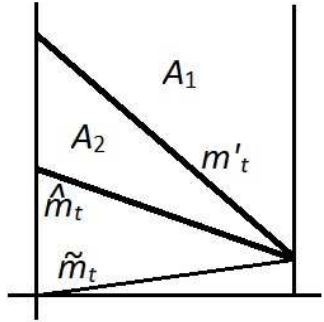}} \vspace*{-3mm}

\parbox[t]{60mm}{\caption{The partition of $A$.}\label{figure10}}
\end{center}
\vspace*{-4mm}
\end{figure}

We apply (\ref{s30}), (\ref{s31}), (\ref{s32}) and assertions 4, 5 of Theorem \ref{dn_inters}. We get that (\ref{dlwtm_q}) holds in $A_1$ (the progression strictly decreases with $m$), and in $A_2$, (\ref{dlwtm_p1}) holds for $p_1\ge 2$, (\ref{dlwtm_2}) holds for $p_1< 2$ (since $s_*+\frac{1}{p_0}‐\frac{1}{p_1}<0$, the progression strictly increases with $m$ for $p_1\le 2$). For $p_1\le 2$, it is sufficient to calculate $2^{((1‐\lambda)\mu_*‐\lambda\alpha_*)k_*t}\cdot 2^{‐m(1‐\lambda)s_*}$ in $(0, \, m'_0)$ and $(\hat t(n), \, \hat m_{\hat t(n)})$; for $p_1>2$, in addition, we calculate $2^{\mu_*k_*t}\cdot 2^{‐m(s_*+1/q‐1/p_1)}d_n(B_{p_1}^{\nu_{t,m}}, \, l_q^{\nu_{t,m}})$ in $(0, \, \hat m_0)$. Taking into account (\ref{mt_pr_eq}), (\ref{tilmt_hatmt_eq}) and (\ref{hat_sigma_m0pr}), we get $n^{‐\hat \sigma}$ and $n^{‐\tilde \theta}$, and for, $p_1>2$, in addition, we get $n^{‐s_*}$.

This together with (\ref{9tiltheta_est}) yields the desired estimates for the widths (see Notation \ref{not6}, case 1).

{\bf Case 2.} Let $\mu_*+\alpha_*+\gamma_*/p_0‐\gamma_*/p_1< 0$, $\mu_*+\alpha_*> 0$. Then $\tilde m_t\le 0$, $m_t\le 0$ for $t\ge 0$.

Let $q\le 2$. We define the number $\hat t(n)$ by $\hat m_{\hat t(n)}=0$. Then 
\begin{align}
\label{theta3lam000}
\sup _{f\in M}\|f\|_{Y_q(\tilde \Omega _{[\hat t(n)]})} \stackrel{(\ref{nu2}), (\ref{emb_nu})}{\underset{\mathfrak{Z}_0}{\lesssim}} 2^{((1‐\lambda)\mu_*‐\lambda \alpha_*)k_*\hat t(n)} \stackrel{(\ref{hat_mt})}{=} n^{((1‐\lambda)\mu_*‐\lambda \alpha_*)/\gamma_*} = n^{‐\theta _2}
\end{align} 
(see Notation \ref{not6}, case 2, subcase $q\le 2$).

By (\ref{s30}), (\ref{s31}) and assertion 6 of Theorem \ref{dn_inters}, we get (\ref{dlwtm_q}) for $(t, \, m)\in A$; the progression strictly decreases with $m$. Hence it is sufficient to calculate $2^{((1‐\lambda)\mu_*‐\lambda\alpha_*)k_*t}\cdot 2^{‐m(1‐\lambda)s_*}$ in $(0, \, \hat m_0)$ and $(\hat t(n), \, 0)$. We get $n^{‐\theta_1}$ and $n^{‐\theta_2}$ (see Notation \ref{not6}, case 2, subcase $q\le 2$). This together with (\ref{theta3lam000}) yields the estimate for the widths.

Let $q> 2$. We define the number $\hat t(n)$ by $m'_{\hat t(n)}=0$. By (\ref{nu2}), (\ref{emb_nu}), (\ref{mtpr0eq}), we get (\ref{9est_nu}).

We define the number $t_1(n)$ by $\hat m_{t_1(n)}=0$. From Proposition \ref{m_pr_t0_hat_mt} it follows that $t_1(n)<\hat t(n)$, $\overline{m}_{\hat t(n)}>0$.

Taking into account (\ref{2hm0m0provrmo}), we get that $A$ is divided into
$$
A_1=\{(t, \, m):\; 0\le t\le \hat t(n), \; m\ge m'_t\},
$$
$$
A_2=\{(t, \, m):\; 0\le t\le \hat t(n), \; (\hat m_t)_+\le m\le m'_t\}
$$
(see Fig. 11).
\begin{figure}[h]
\vspace*{-3mm}
\begin{center} \resizebox{50mm}{!}
{\includegraphics{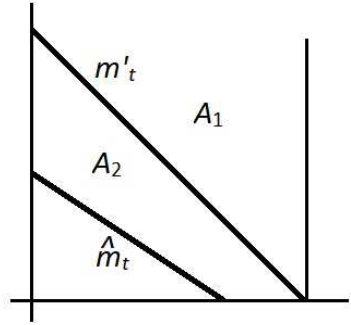}} \vspace*{-3mm}

\parbox[t]{60mm}{\caption{The partition of $A$.}\label{figure11}}
\end{center}
\vspace*{-4mm}
\end{figure}

By (\ref{s30})‐‐(\ref{s32}) and assertions 4, 5 of Theorem \ref{dn_inters}, (\ref{dlwtm_q}) holds in $A_1$ (the progression strictly decreases with $m$), and in $A_2$, (\ref{dlwtm_p1}) holds for $p_1\ge 2$, (\ref{dlwtm_2}) holds for $p_1< 2$ (if $p_1\le 2$, the progression strictly increases with $m$). Hence, for $p_1\le 2$, it is sufficient to calculate $2^{((1‐\lambda)\mu_*‐\lambda\alpha_*)k_*t}\cdot 2^{‐m(1‐\lambda)s_*}$ in $(0, \, m'_0)$ and $(\hat t(n), \, 0)$, and for $p_1>2$, in addition, $2^{\mu_*k_*t}\cdot 2^{‐m(s_*+1/q‐1/p_1)}d_n(B_{p_1}^{\nu_{t,m}}, \, l_q^{\nu_{t,m}})$ in $(0, \, \hat m_0)$ and $(t_1(n), \, 0)$. Taking into account (\ref{mt_pr_eq}), (\ref{mtpr0eq}) and (\ref{hat_sigma_m0pr}), we get $n^{‐\theta_j}$, $j=1, \, \dots, \, j_0$ (see Notation \ref{not6}, case 2, subcase $q>2$).

This together with (\ref{9est_nu}) yields the desired estimate for the widths.

{\bf Case 3.} Let $\mu_*+\alpha_*< 0$, $s_*+\frac 1q‐\frac{1}{p_1}<0$. Then $\mu_*+\alpha_*+\gamma_*/p_0‐\gamma_*/p_1< 0$, $\tilde m_t\le 0$, $m_t\ge 0$ for $t\ge 0$. 

Let $q\le 2$. We define the number $\hat t(n)$ by $\hat m_{\hat t(n)} = m_{\hat t(n)}$. By (\ref{nu4}), (\ref{emb_nu}), (\ref{hat_mt}), (\ref{mt_t}), we have (\ref{supfmthetahat}).

From (\ref{s30}), (\ref{s31}) and assertion 6 of Theorem \ref{dn_inters} it follows that (\ref{dlwtm_q}) holds in $A$; the progression strictly decreases with $m$. Hence it is sufficient to calculate $2^{((1‐\lambda)\mu_*‐\lambda\alpha_*)k_*t}\cdot 2^{‐m(1‐\lambda)s_*}$ in $(0, \, \hat m_0)$ and $(\hat t(n), \, \hat m_{\hat t(n)})$. Taking into account (\ref{mt_t}) and (\ref{mt_hatmt_eq}), we get $n^{‐\theta_1}$ and $n^{‐\theta_2}$ (see Notation \ref{not6}, case 3, subcase $q\le 2$). This together with (\ref{supfmthetahat}) yields the estimate for the widths.

Let $q> 2$. We define the number $\hat t(n)$ by $\overline{m}_{\hat t(n)} = m_{\hat t(n)}$. By (\ref{nu4}), (\ref{emb_nu}), (\ref{line_mt}), (\ref{mt_t}), we get (\ref{qth2sup}).

Notice that $m'_t>\hat m_t$ for $0\le t\le \hat t(n)$. Indeed, otherwise there exists $t^*\in [0, \, \hat t(n)]$ such that $\hat m_{t^*}=m'_{t^*}$ (it follows from (\ref{2hm0m0provrmo})). By (\ref{mt_pr_mt_mthatline}), we have $m'_{\hat t(n)} = m_{\hat t(n)}>0$ (this together with (\ref{2hm0m0provrmo}) implies that $m'_{t^*}>0$), $m'_{t^*}=\tilde m_{t^*}\le 0$. We arrive to a contradiction.

Taking into account (\ref{mt_pr_mt_mthatline}), we get that $A$ is divided into
$$
A_1=\{(t, \, m):\; 0\le t\le \hat t(n), \; m\ge m'_t\},
$$
$$
A_2=\{(t, \, m):\; 0\le t\le \hat t(n), \; (\hat m_t)_+\le m\le m'_t, \; m\ge m_t\},
$$
$$
A_3=\{(t, \, m):\; 0\le t\le \hat t(n), \; (\hat m_t)_+\le m\le m_t\}
$$
(see Fig. 12).

\begin{figure}[h]
\vspace*{-3mm}
\begin{center} \resizebox{50mm}{!}
{\includegraphics{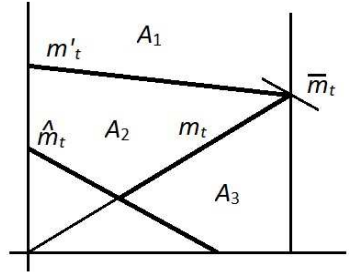}} \vspace*{-3mm}

\parbox[t]{60mm}{\caption{The partition of $A$.}\label{figure12}}
\end{center}
\vspace*{-4mm}
\end{figure}

We apply (\ref{s30})‐‐(\ref{s32}), (\ref{sved_k_p1}) and assertions 4--5 of Theorem \ref{dn_inters}. In $A_1$, (\ref{dlwtm_q}) holds (the progression strictly decreases with $m$), in $A_2$, we have (\ref{dlwtm_p1}) for $p_1\ge 2$ and (\ref{dlwtm_2}) for $p_1< 2$ (since $s_*+\frac 1q‐\frac{1}{p_1}<0$, in both cases, in the right‐hand sides of (\ref{dlwtm_p1}) and (\ref{dlwtm_2}) the progression strictly increases with $m$), and in $A_3$, we have (\ref{dlwtm_p1}) (the progression strictly increases with $m$). Hence it is sufficient to calculate $2^{((1‐\lambda)\mu_*‐\lambda\alpha_*)k_*t}\cdot 2^{‐m(1‐\lambda)s_*}$ in $(0, \, m'_0)$ and $(\hat t(n), \, \hat m_{\hat t(n)})$. Taking into account (\ref{mt_pr_eq}), (\ref{mt_linemt_eq}) and (\ref{hat_sigma_m0pr}), we get $n^{‐\theta_j}$, $j=1, \,  2$ (see Notation \ref{not6}, case 3, subcase $q>2$). This together with (\ref{qth2sup}) yields the desired estimates for the widths.

{\bf Case 4.} Let $\mu_*+\alpha_*< 0$, $s_*+\frac 1q‐\frac{1}{p_1}>0$. Then $\mu_*+\alpha_*+\gamma_*/p_0‐\gamma_*/p_1< 0$, $\tilde m_t\le 0$, $m_t\ge 0$ for $t\ge 0$.

First we consider the case $\gamma_*>0$.

Let $q\le 2$. We define the number $\hat t(n)$ by $\hat m_{\hat t(n)}=0$. By (\ref{nu1}), (\ref{emb_nu}), we have (\ref{supfmmugamma}).

The domain $A$ is divided into
$$
A_1=\{(t, \, m):\; 0\le t\le \hat t(n), \;  m\ge (\hat m_t)_+, \; m\ge m_t\},
$$
$$
A_2=\{(t, \, m):\; 0\le t\le \hat t(n), \; (\hat m_t)_+\le m\le m_t\}
$$
(see Fig. 13).

\begin{figure}[h]
\vspace*{-3mm}
\begin{center} \resizebox{50mm}{!}
{\includegraphics{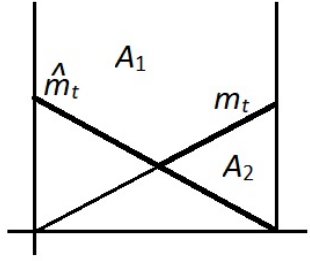}} \vspace*{-3mm}

\parbox[t]{60mm}{\caption{The partition of $A$.}\label{figure13}}
\end{center}
\vspace*{-4mm}
\end{figure}

By (\ref{s30})‐‐(\ref{s31}), (\ref{sved_k_p1}) and assertion 6 of Theorem \ref{dn_inters}, (\ref{dlwtm_q}) holds in $A_1$, (\ref{dlwtm_p1}) holds in $A_2$; the progressions strictly decrease with $m$. We define the number $t_1(n)$ by $\hat m_{t_1(n)}=m_{t_1(n)}$. It is sufficient to calculate $2^{((1‐\lambda)\mu_*‐\lambda\alpha_*)k_*t}\cdot 2^{‐m(1‐\lambda)s_*}$ in $(0, \, \hat m_0)$, and $2^{\mu_*k_*t}\cdot 2^{‐m(s_*+1/q‐1/p_1)}$ in $(t_1(n), \, \hat m_{t_1(n)})$ and $(\hat t(n), \, 0)$. Taking into account (\ref{hat_mt}) and (\ref{mt_hatmt_eq}), we get $n^{‐\theta_j}$, $1\le j\le 3$ (see Notation \ref{not6}, case 4, subcase $q\le 2$). This together with (\ref{supfmmugamma}) yields the estimate for the widths.

Let $q> 2$. We define $\hat t(n)$ by $\overline{m}_{\hat t(n)}=0$. We apply (\ref{nu1}), (\ref{emb_nu}) and obtain (\ref{sup_f_m_qmu}).

As in the previous case, we get that if $t\ge 0$, $m'_t\ge m_t$, then $m'_t>\hat m_t$.

Taking into account (\ref{mt_pr_mt_mthatline}), we get that $A$ is divided into
$$
A_1=\{(t, \, m):\; 0\le t\le \hat t(n), \; m\ge m'_t, \; m\ge m_t\},
$$
$$
A_2=\{(t, \, m):\; 0\le t\le \hat t(n), \; (\hat m_t)_+\le m\le m'_t, \; m\ge m_t\},
$$
$$
A_3=\{(t, \, m):\; 0\le t\le \hat t(n), \; (\hat m_t)_+\le m\le \overline{m}_t, \; m\le m_t\},
$$
$$
A_4=\{(t, \, m):\; 0\le t\le \hat t(n), \; m\le m_t, \; m\ge \overline{m}_t\}
$$
(see Fig. 14).

\begin{figure}[h]
\vspace*{-3mm}
\begin{center} \resizebox{50mm}{!}
{\includegraphics{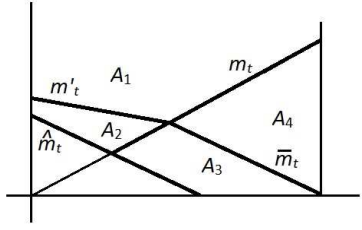}} \vspace*{-3mm}

\parbox[t]{60mm}{\caption{The partition of $A$.}\label{figure14}}
\end{center}
\vspace*{-4mm}
\end{figure}

By (\ref{s30})‐‐(\ref{s32}), (\ref{sved_k_p1}) and assertions 4, 5 of Theorem \ref{dn_inters}, (\ref{dlwtm_q}) holds in $A_1$ (the progression strictly decreases with $m$), in $A_2$,  (\ref{dlwtm_p1}) holds for $p_1\ge 2$, and  (\ref{dlwtm_2}) holds for $p_1< 2$ (if $p_1\le 2$, the progression strictly increases with $m$), in $A_3\cup A_4$, we get (\ref{dlwtm_p1}) (in $A_3$ the progression strictly increases for $p_1\le 2$, in $A_4$, the progression strictly decreases). Notice that, for $p_1\ge 2$, in $A_2\cup A_3$ the right‐hand side of (\ref{dlwtm_p1}) has the order 
$$2^{\mu_*k_*t}\cdot 2^{‐m(s_*+1/q‐1/p_1)}(n^{‐\frac 12}2^{\frac{\gamma_*k_*t}{q}}\cdot 2^{\frac mq})^{\frac{1/p_1‐1/q}{1/2‐1/q}}.$$ We define the numbers $t_1(n)$ and $t_2(n)$ by $\overline{m}_{t_1(n)} =m_{t_1(n)}$, $\hat m_{t_2(n)}=0$.
It is sufficient to calculate $2^{((1‐\lambda)\mu_*‐\lambda\alpha_*)k_*t}\cdot 2^{‐m(1‐\lambda)s_*}$ in $(0, \, m'_0)$ and $2^{\mu_*k_*t}\cdot 2^{‐m(s_*+1/q‐1/p_1)}$ in $(\hat t(n), \, 0)$ and $(t_1(n), \, \overline{m}_{t_1(n)})$, and for $p_1>2$, in addition, we calculate the value $2^{\mu_*k_*t}\cdot 2^{‐m(s_*+1/q‐1/p_1)}d_n(B_{p_1}^{\nu_{t,m}}, \, l_q^{\nu_{t,m}})$ in $(0, \, \hat m_0)$ and $(t_2(n), \, 0)$. Taking into account (\ref{hat_mt}), (\ref{mt_linemt_eq}) and (\ref{hat_sigma_m0pr}), we get $n^{‐\theta_j}$, $1\le j\le j_0$ (see Notation 6, case 4, subcase $q>2$). This together with (\ref{sup_f_m_qmu}) yields the estimate for the widths.

For $\gamma_*=0$, the proof is similar; the number $\hat t(n)$ is such that $2^{k_*\hat t(n)}$ is a sufficiently large degree of $n$.
\end{proof}

\begin{Rem}
\label{rem1} Let $\mu_*+\alpha_*+\frac{\gamma_*}{p_0} ‐\frac{\gamma_*}{p_1}\ge 0$ and one of the following conditions holds:
\begin{enumerate}
\item $p_0\ge q$, $p_1\ge q$;

\item $p_0>q>p_1$, $s_*+\frac{1}{p_0}‐\frac{1}{p_1}< 0$.
\end{enumerate}
Then the assertion of Theorem \ref{main} holds without the condition (\ref{pef1}).
\end{Rem}

The proof is similar; here we use (\ref{up_est0}). Since $p_0\ge q$ and $\mu_*+\alpha_*+\frac{\gamma_*}{p_0} ‐\frac{\gamma_*}{p_1}\ge 0$, we have (\ref{emb_nu}), where $\nu_*$ is defined by (\ref{nu3}); recall that it follows from (\ref{f_yqe}) and H\"{o}lder's inequality. We get that $\hat m_t >0$ holds for $t< \hat t(n)$; we set $s_t=\nu_{t,0}$ and obtain $d_{s_t}(B_{p_0}^{\nu_{t,0}}, \, l_q^{\nu_{t,0}})=0$.

\section{The estimates for the widths of the intersection of weighted Sobolev classes}

Recall the definitions of a John domain and of an $h$‐set.

We denote by $B_a(x)$ the Euclidean ball of radius $a$ centered at the point $x$.

\begin{Def}
\label{fca} Let $\Omega\subset\R^d$ be a bounded domain, $a>0$. We write $\Omega \in {\bf FC}(a)$ if there is a point $x_*=x_*(\Omega)\in \Omega$
such that for each $x\in \Omega$ there are a number $T(x)>0$ and a curve $\gamma _x:[0, \, T(x)] \rightarrow\Omega$ with the following properties:
\begin{enumerate}
\item $\gamma _x$ has the natural parametrization with respect to the Euclidean norm on $\R^d$,
\item $\gamma _x(0)=x$, $\gamma _x(T(x))=x_*$,
\item $B_{at}(\gamma _x(t))\subset \Omega$ for all $t\in [0, \, T(x)]$.
\end{enumerate}
We say that $\Omega$ is a John domain if $\Omega\in {\bf FC}(a)$ for some $a>0$.
\end{Def}

\begin{Def}
\label{h_set} {\rm (see \cite{m_bricchi1}).} Let $\Gamma\subset
\R^d$ be a non‐empty compact set, let $h:(0, \, 1] \rightarrow (0, \,
\infty)$ be a non‐decreasing function. We say that $\Gamma$ is an
$h$-set if there are $c_*\ge 1$ and a finite
$\sigma$‐additive measure $\mu$ on $\R^d$ such that $\supp
\mu=\Gamma$ and
$$
c_*^{-1}h(t)\le \mu(B_t(x))\le c_* h(t)
$$
for all $x\in \Gamma$ and $t\in (0, \, 1]$.
\end{Def}

Below ${\rm mes}$ is the Lebesgue measure on $\R^d$, $Y_q(\Omega) = L_{q,v}(\Omega)$, ${\cal P}(\Omega) = {\cal P}_{r‐1}(\Omega)$ is the space of polynomials of degree at most $r‐1$, $X_{p_0}(\Omega) = L_{p_0,w}(\Omega)$. When estimating the widths of $\widehat M$ defined by (\ref{widehat_m}), we set
\begin{align}
\label{xp1def}
\begin{array}{c}
X_{p_1}(\Omega)=\left\{f:\Omega \rightarrow \R: \; \left\|\frac{\nabla^r f}{g}\right\|^{p_1}_{L_{p_1}(\Omega)} + \left\|\frac{f}{g_0}\right\|^{p_1}_{L_{p_1}(\Omega)}<\infty\right\},
\\
\|f\|_{X_{p_1}(\Omega)}= \left(\left\|\frac{\nabla^r f}{g}\right\|^{p_1}_{L_{p_1}(\Omega)} + \left\|\frac{f}{g_0}\right\|^{p_1}_{L_{p_1}(\Omega)}\right)^{1/p_1}.
\end{array}
\end{align}
When estimating the widths of $M$ defined by (\ref{m_def}), we set
\begin{align}
\label{xp1def1}
\begin{array}{c}
X_{p_1}(\Omega)=\left\{f:\Omega \rightarrow \R: \; \left\|\frac{\nabla^r f}{g}\right\|_{L_{p_1}(\Omega)}<\infty\right\},
\\
\|f\|_{X_{p_1}(\Omega)}= \left\|\frac{\nabla^r f}{g}\right\|_{L_{p_1}(\Omega)}.
\end{array}
\end{align}

First we consider $\widehat M$ in $L_{q,v}(\Omega)$ defined by (\ref{widehat_m}), where $g$, $g_0$, $w$, $v$ are defined by (\ref{gwv}), (\ref{g0_def}), $\Omega \subset \left(‐\frac 12, \, \frac 12\right)^d$ is a John domain, $\Gamma \subset \partial \Omega$ is an $h$‐set, $h$ is defined by (\ref{h_theta}).

We show that for $\widehat{M}$ Assumptions \ref{supp1}‐‐\ref{supp6} hold with
\begin{align}
\label{1supp1_6}
\gamma_*=\theta, \;\; s_*=\frac rd, \;\; \mu_* = \beta + \lambda ‐ r ‐\frac dq + \frac{d}{p_1}, \;\; \alpha _* = \sigma ‐\lambda +\frac dq ‐\frac{d}{p_0}. 
\end{align}

We define the partitions $\{\Omega_{t,j}\}_{t\ge t_0, j\in \hat J_t}$ and $T_{t,j,m}$ as in \cite{vas_inters} (\S 4, proof of Theorem 1). Notice that there are numbers $b_*=b_*(a, \, d)>0$, $\overline{s}=\overline{s}(a, \, d)\in \N$ such that $\Omega _{t,j}\in {\bf FC}(b_*)$,
\begin{align}
\label{diam_otj} 
\begin{array}{c}
{\rm diam}\, \Omega _{t,j} \underset{a,d}{\asymp} 2^{‐\overline{s}t}, \quad g(x) \underset{\mathfrak{Z}_0}{\asymp} 2^{\beta\overline{s}t}, \; g_0(x)\underset{\mathfrak{Z}_0}{\asymp} 2^{(\beta‐r)\overline{s}t}, \\ w(x) \underset{\mathfrak{Z}_0}{\asymp} 2^{\sigma\overline{s}t},\; v(x) \underset{\mathfrak{Z}_0}{\asymp} 2^{\lambda\overline{s}t}, \; x\in \Omega_{t,j}.
\end{array}
\end{align}
If $E\in T_{t,j,m}$, then there is $b_{**}(a, \, d)>0$ such that $E\in {\bf FC}(b_{**}(a, \, d))$,
\begin{align}
\label{mes_e} {\rm mes}\, E \underset{a,d}{\lesssim} 2^{‐\overline{s}td‐m}, \quad {\rm diam}\, E \underset{a,d}{\lesssim} 2^{‐\overline{s}t‐\frac md}.
\end{align}

Assumption \ref{supp1} follows from (\ref{diam_otj}), the embedding theorem \cite{resh1, resh2} and H\"{o}lder's inequality.

Assumptions \ref{supp2}‐‐\ref{supp4}, \ref{supp6} can be proved as in \cite{vas_inters} (see \S 4--5, proofs of the upper and the lower estimates for Theorem 1).

Let us prove that Assumption \ref{supp5} holds. 

We show that for $E\in T_{t,j,m}$
\begin{align}
\label{mes_e_low} {\rm mes}\, E \underset{a,d}{\gtrsim} 2^{‐\overline{s}td‐m}, \quad {\rm diam}\, E \underset{a,d}{\gtrsim} 2^{‐\overline{s}t‐\frac md}.
\end{align}
To this end, we recall how to construct the partitions $T_{t,j,m}$ according to \cite{vas_john}. 

Let $t\ge t_0$, $j\in \hat J_t$ be fixed; we set $G=\Omega_{t,j}$. Let $\Theta(G)$ be the Whitney's covering of $G$. Each element of $\Theta(G)$ is a dyadic cube; if the intersection of $\Delta$, $\Delta'\in \Theta(G)$ has dimension $d‐1$, then
\begin{align}
\label{mes_d_d1}
\frac{{\rm mes}(\Delta)}{{\rm mes}(\Delta')} \underset{d}{\asymp} 1. 
\end{align}
In \cite[Lemma 3]{vas_john}, a tree ${\cal T}$ with the root $\omega_*$ was constructed, as well as the bijection $F:{\bf V}({\cal T})\rightarrow \Theta(G)$ (here ${\bf V}({\cal T})$ is the set of all vertices of ${\cal T}$) with the following properties:
\begin{enumerate}
\item If the vertices $\omega$ and $\omega'$ are adjacent, then $F(\omega)$ and $F(\omega')$ have the $d‐1$‐dimensional intersection.
\item There are numbers $c_1=c_1(a, \, d)\in \N$ and $c_2=c_2(a, \, d)\in \N$ such that for all vertices $\omega>\omega'$
\begin{align}
\label{momega} \rho(\omega, \, \omega')\le c_1(m_\omega‐m_{\omega'})+c_2,
\end{align} 
where $\rho(\omega, \, \omega')$ is the distance between $\omega$ and $\omega'$, $2^{‐m_\omega}$ and $2^{‐m_{\omega'}}$ is the length of the edge of $F(\omega)$ and $F(\omega')$, respectively.
\end{enumerate}

Let ${\cal T}'$ be a subtree of ${\cal T}$ with the minimal vertex $\omega'$, let ${\bf V}({\cal T}')$ be its vertex set, and let $\tilde G_{{\cal T}'} = \cup _{\omega\in {\bf V}({\cal T}')} F(\omega)$. In \cite{vas_john} (see formula (3.1) and Corollary 1) the domain 
\begin{align}
\label{g_cal_t_pr}
G_{{\cal T}'}\in {\bf FC}(b_{**}(a, \, d)), \quad x_*(G_{{\cal T}'}) \in F(\omega'),
\end{align}
was constructed; $G_{{\cal T}'}\bigtriangleup \tilde G_{{\cal T}'}$ has the zero measure, the point $x_*(G_{{\cal T}'})$ is from Definition \ref{fca}.

Let $\omega$ be a vertex of ${\cal T}$. We denote by ${\cal T}_\omega$ the tree with vertex set $\{\omega':\; \omega'\ge \omega\}$, and by ${\bf V}_1(\omega)$, the set of vertices that follow the vertex $\omega$: $${\bf V}_1(\omega)=\{\omega'>\omega:\; \omega'\text{ is adjacent with }\omega\}.$$

From (\ref{mes_d_d1}) and property 1 of $F$ we get that there is a number $k(d)\in \N$ such that ${\rm card}\, {\bf V}_1(\omega)\le k(d)$ for each vertex $\omega$.

The partition $T_{t,j,m}$ was constructed in two steps.

\begin{enumerate}
\item First we construct the partition $T'_{t,j,m}$ (see Lemma 5 from \cite{vas_john}). For each vertex $\omega$ we construct the partition $P_\omega$ of $G_{{\cal T}_\omega}$ (see Lemma 4 from \cite{vas_john}). If $${\rm mes}(G_{{\cal T}_\omega})\le (k(d)+1){\rm mes}(G)\cdot 2^{‐m},$$ we set $P_\omega=\{G_{{\cal T}_\omega}\}$. If 
\begin{align}
\label{mes_g_kd1}
{\rm mes}(G_{{\cal T}_\omega})> (k(d)+1){\rm mes}(G)\cdot 2^{‐m},
\end{align}
we find a vertex $\hat \omega \ge \omega$ such that
\begin{align}
\label{mes_g_t_g_mg}
{\rm mes}(G_{{\cal T}_{\hat \omega}}) > {\rm mes}(G_{{\cal T}_\omega})‐ 2^{‐m}{\rm mes}(G) \stackrel{(\ref{mes_g_kd1})}{\ge} k(d)\cdot 2^{‐m}{\rm mes}(G),
\end{align}
and for each $\omega'\in {\bf V}_1(\hat \omega)$,
$$
{\rm mes}(G_{{\cal T}_{\omega'}}) \le {\rm mes}(G_{{\cal T}_\omega})‐ 2^{‐m}{\rm mes}(G).
$$
By (\ref{g_cal_t_pr}), (\ref{mes_g_kd1}) and (\ref{mes_g_t_g_mg}), ${\rm mes}(F(\omega)) \underset{a,d}{\gtrsim} 2^{‐m}{\rm mes}(G)$, 
\begin{align}
\label{mes_l_e0}
{\rm mes}(F(\hat\omega)) \underset{a,d}{\gtrsim} 2^{‐m}{\rm mes}(G); 
\end{align}
from (\ref{mes_d_d1}) and property 1 of $F$ it follows that if $\omega'\in {\bf V}_1(\hat \omega)$, then ${\rm mes}(F(\omega')) \underset{a,d}{\gtrsim} 2^{‐m}{\rm mes}(G)$. Hence
\begin{align}
\label{mes_l_e1} {\rm mes}\, (G_{{\cal T}_{\omega'}})\underset{a,d}{\gtrsim} 2^{‐m}{\rm mes}(G), \; \omega'\in {\bf V}_1(\hat \omega),
\end{align}
and if $\hat\omega>\omega$, then
\begin{align}
\label{mes_l_e2} {\rm mes}(G_{{\cal T}_\omega \backslash {\cal T}_{\hat \omega}})\underset{a,d}{\gtrsim} 2^{‐m}{\rm mes}(G).
\end{align}
The partition $P_\omega$ consists of $G_{{\cal T}_\omega \backslash {\cal T}_{\hat \omega}}$ (if $\hat\omega>\omega$), $F(\hat \omega)$ and $G_{{\cal T}_{\omega'}}$, $\omega'\in {\bf V}_1(\hat \omega)$.

Now the partition $T'_{t,j,m}$ is defined as follows. First we construct $P_{\omega_*}$. If $P_{\omega_*}\ne \{G_{{\cal T}_{\omega_*}}\}$, then for each $\omega'\in {\bf V}_1(\hat \omega)$ we construct $P_{\omega'}$, and so on. When we stop dividing the subtrees, we get the desired partition $T'_{t,j,m}$. From (\ref{mes_l_e0})‐‐(\ref{mes_l_e2}) it follows that
\begin{align}
\label{mes_l_e3} {\rm mes}(E) \underset{a,d}{\gtrsim} 2^{‐m}{\rm mes}(G), \; E\in T'_{t,j,m}.
\end{align}
\item Now we construct $T_{t,j,m}$; it is a subdivision of $T'_{t,j,m}$. If $F(\omega)\in T'_{t,j,m}$, ${\rm mes}(F(\omega)) > 2^{‐m}{\rm mes}(G)$, then we take the uniform division of $F(\omega)$ into dyadic cubes $\Delta$, 
\begin{align}
\label{mes_l_e4}
2^{‐m}{\rm mes}(G)\underset{d}{\lesssim}{\rm mes}(\Delta) \le 2^{‐m}{\rm mes}(G).
\end{align}
\end{enumerate}
From (\ref{mes_l_e3})‐‐(\ref{mes_l_e4}) we get (\ref{mes_e_low}). This together with (\ref{mes_e}) implies that
\begin{align}
\label{mes_l_e5}
{\rm mes}\, E \underset{a,d}{\asymp} 2^{‐\overline{s}td‐m}, \quad {\rm diam}\, E \underset{a,d}{\asymp} 2^{‐\overline{s}t‐\frac md}, \quad E\in T_{t,j,m}.
\end{align}

Now we prove that Assumption \ref{supp5} holds.

The inequality (\ref{pef}) is checked as in \cite{vas_inters} (see \S 4, proof of the upper estimate for Theorem 1). The estimate (\ref{pef1}) can be proved similarly as (\ref{pef}). Here we use the following fact: if $B$ is a ball of radius $\rho$, $f\in W^r_{p_1}(B)$, then by the H\"{o}lder's inequality
$$
\rho ^{‐d+\frac{d}{q}}\|f\|_{L_1(B)} \underset{p_1,r,d}{\lesssim} \rho ^{r +\frac dq ‐\frac{d}{p_1}} \left(\|\nabla^r f\| _{L_{p_1}(B)} + \rho ^{‐r}\|f\|_{L_{p_1}(B)}\right).
$$

Let us prove (\ref{ptmf_to_f}). Recall how the operator $P_E$ is defined. Let $x_*(E)\in E$ be the point from Definition \ref{fca}, let $B_E\subset E$ be the ball centered at $x_*(E)$, ${\rm diam}\, B_E \underset{a,d}{\asymp} {\rm diam}\, E$. We take the orthogonal projection from  $L_2(B_E)$ onto ${\cal P}_{r‐1}(B_E)$, then extend it onto $L_1(B_E)$ as a continuous operator. After that we extend each polynomial onto $\R^d$ and multiply it by $\chi_E$. Since $L_{q,v}(B_E)\subset L_1(B_E)$, the operator $P_E$ is well‐defined on $L_{q,v}(\Omega)$.

If $f\in L_{q,v}(\Omega)$, then
$$
\|P_E(f\cdot \chi_E)\|_{L_q(E)} \underset{a,d,q}{\lesssim} \|P_E(f\cdot \chi_E)\|_{L_q(B_E)} \underset{d,q}{\lesssim}$$$$\lesssim ({\rm mes}\, B_E)^{1/q‐1}\|P_E(f\cdot \chi_E)\|_{L_1(B_E)} \underset{d,q}{\lesssim} ({\rm mes}\, B_E)^{1/q‐1}\|f\|_{L_1(B_E)}\le \|f\|_{L_q(E)}.
$$
This together with (\ref{diam_otj}) implies that
\begin{align}
\label{bound_oper} \left\| \sum \limits _{E\in T_{t,j,m}}P_E(f\cdot \chi_E)\right\| _{L_{q,v}(\Omega_{t,j})} \underset{a,d,q}{\lesssim} \|f\|_{L_{q,v}(\Omega_{t,j})}.
\end{align}
If $f\in C_0^\infty(\Omega_{t,j})$, then (\ref{ptmf_to_f}) follows from (\ref{diam_otj}), (\ref{mes_e}) and the estimate 
$$
\|f‐P_E(f\cdot \chi_E)\|_{L_q(E)} \underset{q,d,a}{\lesssim} (2^{‐m}{\rm mes}\, \Omega_{t,j})^{1/d} \|\nabla f\|_{L_q(E)}, \quad E\in T_{t,j,m}
$$
(see \cite[Lemma 8]{vas_width_raspr}).
The space $C_0^\infty(\Omega_{t,j})$ is dense in $L_{q,v}(\Omega_{t,j})$. This together with (\ref{bound_oper}) yields that (\ref{ptmf_to_f}) holds for each function $f\in L_{q,v}(\Omega_{t,j})$.

Let us prove (\ref{fpef}). Let $E\in T_{t,j,m}$, $E'\in T_{t,j,m\pm 1}$, ${\rm mes}(E\cap E')>0$. First we define the domain $G_{E,E'} \subset E\cup E'$. According to the construction of $T_{t,j,m}$, we have the following cases.

\begin{enumerate}
\item Let $E=G_{{\cal A}}$, $E'=G_{{\cal A}'}$, where ${\cal A}$ and ${\cal A}'$ are subtrees of ${\cal T}$ with the minimal vertices $\omega$ and $\omega'$. Then $\omega$ are $\omega'$ comparable; hence they can be joint by a chain ${\cal S}$. Let $G_{E,E'}= G_{{\cal S}}$. Notice that ${\rm mes}\, F(\omega) \underset{a,d}{\asymp} {\rm mes}(E)\stackrel{(\ref{mes_l_e5})}{\underset{a,d}{\asymp}} {\rm mes}(E') \underset{a,d}{\asymp} {\rm mes}\, F(\omega')$. By (\ref{momega}), ${\cal S}$ has at most $C(a, \, d)$ vertices. Therefore $G_{E,E'}\in {\bf FC}(b_1(a,\, d))$, where $b_1(a,\, d)>0$; we can take as $x_*(G_{E,E'})$ from Definition \ref{fca} the center of $F(\omega)$, as well as the center of $F(\omega')$.

\item Let $E'\subset F(\omega')$ be a dyadic cube, $E=G_{{\cal A}}$, where ${\cal A}$ is a subtree of ${\cal T}$ with the minimal vertex $\omega$ (the case when $E\subset F(\omega)$, $E'=G_{{\cal A}'}$ is considered similarly). Then $\omega'$ is a vertex of ${\cal A}$. As in the previous case we join $\omega$ and $\omega'$ by a chain ${\cal S}$ and set $G_{E,E'}= G_{{\cal S}}$. Again $G_{E,E'}\in {\bf FC}(b_1(a,\, d))$, ${\rm mes}\, F(\omega) \stackrel{(\ref{mes_l_e5})}{\underset{a,d}{\asymp}} {\rm mes}\, E'$; hence we can take as the point $x_*(G_{E,E'})$ from Definition \ref{fca} the center of $F(\omega)$, as well as the center of $E'$.

\item Let $E$ and $E'$ be dyadic cubes, $E\subset E'$ or $E'\subset E$. By (\ref{mes_l_e5}), ${\rm mes}\, E\underset{a, \, d}{\asymp} {\rm mes}\, E'$. We set $G_{E,E'}=E\cup E'$. Then $G_{E,E'}\in {\bf FC}(b_2(a, \, d))$; we can take as the point $x_*(G_{E,E'})$ from Definition \ref{fca} the center of $E$, as well as the center of $E'$.
\end{enumerate}

In all cases
\begin{align}
\label{diamgee_diamee} {\rm diam}\, G_{E,E'} \underset{a,d}{\asymp} {\rm diam}\,(E\cup E').
\end{align}

Let $B_{E,E'}\supset E\cup E'$ be a ball, 
\begin{align}
\label{diam_bee_diam_ee}
{\rm diam}\, B_{E,E'} \underset{a,d}{\asymp} {\rm diam}(E\cup E'); 
\end{align}
we extend the polynomials $\tilde P_Ef$ and $\tilde P_{E'}f$ onto $\R^d$ (the notation will be the same).

Applying the embedding theorem, we get
$$
\|\tilde P_Ef‐\tilde P_{E'}f\| _{L_q(E\cup E')} \le \|\tilde P_Ef‐\tilde P_{E'}f\| _{L_q(B_{E,E'})} \underset{d,q}{\lesssim} 
$$
$$
\lesssim ({\rm mes}\, B_{E,E'})^{1/q‐1}\|\tilde P_Ef‐\tilde P_{E'}f\| _{L_1(B_{E,E'})}\stackrel{(\ref{diamgee_diamee}), (\ref{diam_bee_diam_ee})}{\underset{a,d,q}{\lesssim}} 
$$
$$
\lesssim({\rm mes}\, B_{E,E'})^{1/q‐1}\|\tilde P_Ef‐\tilde P_{E'}f\| _{L_1(G_{E,E'})}\le
$$
$$
\le ({\rm mes}\, B_{E,E'})^{1/q‐1}(\|f‐\tilde P_{E'}f\| _{L_1(G_{E,E'})}+ \|f‐\tilde P_Ef\| _{L_1(G_{E,E'})})\underset{a,d,p_1,r}{\lesssim}
$$
$$
\lesssim ({\rm mes}\, B_{E,E'})^{1/q‐1}({\rm mes}\, G_{E,E'})^{\frac rd+1‐\frac{1}{p_1}} \|\nabla^r f\|_{L_{p_1}(E\cup E')}\stackrel{(\ref{mes_l_e5}), (\ref{diamgee_diamee}), (\ref{diam_bee_diam_ee})}{\underset{a,d,p_1,q,r}{\lesssim}}
$$
$$
\lesssim 2^{‐(\overline{s}td+m)\left(\frac rd+\frac 1q‐\frac {1}{p_1}\right)}\|\nabla^r f\|_{L_{p_1}(E\cup E')}.
$$

This together with (\ref{diam_otj}) yields (\ref{fpef}).

This completes the proof of  Assumption \ref{supp5}. Notice that in (\ref{fpef}) the space $X_{p_1}(\Omega)$ can be defined by (\ref{xp1def}), as well as by (\ref{xp1def1}). We obtain

\begin{Trm}
Let $\widehat M$ be defined by \eqref{widehat_m}, let $g$, $g_0$, $w$, $v$ be defined by \eqref{gwv}, \eqref{g0_def}, let $\Omega \subset \left(‐\frac 12, \, \frac 12\right)^d$ be a John domain, let $\Gamma \subset \partial \Omega$ be an $h$‐set, where $h$ is defined by \eqref{h_theta}. Let $s_*$, $\gamma_*$, $\mu_*$, $\alpha_*$ be defined according to \eqref{1supp1_6}. Then for $M:=\widehat{M}$ Theorem \ref{main} holds.

If one of the conditions from Remark \ref{rem1} holds, the same estimates are true for the set $M$ defined by (\ref{m_def}).
\end{Trm}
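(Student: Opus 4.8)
The plan is to obtain the statement as an immediate consequence of Theorem \ref{main} and Remark \ref{rem1}, by recognizing $\widehat M$ and $M$ as realizations of the abstract set $M=BX_{p_0}(\Omega)\cap BX_{p_1}(\Omega)$ from \S 2 and quoting the verification of Assumptions \ref{supp1}--\ref{supp6} carried out above.

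For $\widehat M$ I would fix the abstract data as $Y_q(\Omega)=L_{q,v}(\Omega)$, ${\cal P}(\Omega)={\cal P}_{r-1}(\Omega)$, $X_{p_0}(\Omega)=L_{p_0,w}(\Omega)$, and $X_{p_1}(\Omega)$ given by (\ref{xp1def}). Then $BX_{p_0}(\Omega)\cap BX_{p_1}(\Omega)=\widehat M$ and $d_n(\widehat M,\,L_{q,v}(\Omega))=d_n(M,\,Y_q(\Omega))$, so it suffices to confirm that Assumptions \ref{supp1}--\ref{supp6} hold with $k_*=\overline{s}$, $r_0=\dim{\cal P}_{r-1}(\Omega)$, a constant $c=c(a,d)$, and $s_*,\gamma_*,\mu_*,\alpha_*$ as in (\ref{1supp1_6}). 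This has just been done: Assumption \ref{supp1} follows from (\ref{diam_otj}), the embedding theorem for John domains \cite{resh1,resh2} and H\"older's inequality; Assumptions \ref{supp2}--\ref{supp4} and \ref{supp6} are obtained as in \cite{vas_inters}; Assumption \ref{supp5} has been verified above, the exponents $\mu_*$, $\alpha_*$ arising by inserting the scalings (\ref{diam_otj}) of $g,g_0,w,v$ into the Sobolev and H\"older bounds on a ball of radius $\asymp 2^{-\overline{s}t}$, and $\gamma_*=\theta$ coming from (\ref{h_theta}) via ${\rm card}\,\hat J_t\asymp 2^{\theta\overline{s}t}$. Theorem \ref{main} then applies verbatim and gives $d_n(\widehat M,\,L_{q,v}(\Omega))\underset{\mathfrak{Z}_0}{\asymp}n^{-\theta_{j_*}}$.

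For $M$ I would repeat the same identification but with $X_{p_1}(\Omega)$ given instead by (\ref{xp1def1}) (now a seminorm, vanishing on ${\cal P}_{r-1}$); this still yields $BX_{p_0}(\Omega)\cap BX_{p_1}(\Omega)=M$. Under either hypothesis of Remark \ref{rem1} one has $p_0\ge q$, so the embedding part of Assumption \ref{supp1} reduces to H\"older's inequality, and Assumptions \ref{supp2}--\ref{supp4}, \ref{supp6} go through unchanged (note $\nabla^r P\equiv 0$ for $P\in{\cal P}_{r-1}$). In Assumption \ref{supp5}, the estimate (\ref{pef}) involves only $\|\cdot\|_{X_{p_0}}$, property (\ref{ptmf_to_f}) is a convergence statement in $Y_q(\Omega)$, and (\ref{fpef}) has been shown to hold with $X_{p_1}(\Omega)$ defined by (\ref{xp1def1}) as well as by (\ref{xp1def}); only (\ref{pef1}) is unavailable. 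Since that is precisely the situation treated by Remark \ref{rem1}, invoking it completes the argument for $M$.

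The main obstacle is the $M$-case: without the auxiliary term $\|f/g_0\|_{p_1}$ the full chain of Assumption \ref{supp5} cannot be run, and the compact embedding $X_{p_1}(\Omega)\cap X_{p_0}(\Omega)\subset Y_q(\Omega)$ is genuinely delicate, which is why \cite{vas_inters} left this case open. The reduction works because the conditions of Remark \ref{rem1} impose $p_0\ge q$ together with $\mu_*+\alpha_*+\gamma_*/p_0-\gamma_*/p_1\ge 0$, so that the embedding --- equivalently the bound (\ref{emb_nu}) with $\nu_*$ from (\ref{nu3}) --- follows purely from (\ref{f_yqe}) and H\"older, bypassing any Sobolev estimate that would require the weight $g$; once this is secured, Remark \ref{rem1} supplies the remaining estimate without (\ref{pef1}).
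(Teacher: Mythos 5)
Your proposal is correct and follows essentially the same route as the paper: identify $\widehat M$ (resp.\ $M$) with $BX_{p_0}(\Omega)\cap BX_{p_1}(\Omega)$ for $X_{p_1}$ given by (\ref{xp1def}) (resp.\ (\ref{xp1def1})), invoke the verification of Assumptions \ref{supp1}--\ref{supp6} with the parameters (\ref{1supp1_6}), and then apply Theorem \ref{main}, using for $M$ the observation that (\ref{fpef}) holds for either definition of $X_{p_1}$ and that only (\ref{pef1}) fails, which is exactly the situation covered by Remark \ref{rem1}. Your added remarks on why $p_0\ge q$ makes the embedding and Assumption \ref{supp1} unproblematic in the $M$-case match the paper's reliance on (\ref{f_yqe}) and H\"older via (\ref{nu3}).
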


\vskip 0.3cm

Now we consider Example 2 from \cite{vas_inters}. Let $\Omega \subset \left(‐\frac 12, \, \frac 12\right)^d$ be a John domain, let $\Gamma \subset \partial \Omega$ be an $h$‐set, and let $\widehat{M}$ be defined by (\ref{widehat_m}), where
$$
g(x)=\varphi_g({\rm dist}\, (x, \, \Gamma)), \quad g_0(x)=\varphi_{g_0}({\rm dist}\, (x, \, \Gamma)), $$$$
w(x) =\varphi_w({\rm dist}\, (x, \, \Gamma)), \quad v(x)
=\varphi_v({\rm dist}\, (x, \, \Gamma)),
$$
$$ 
h(t) = (\log t)_*^{-\gamma}, \quad \gamma \ge 0,
$$
$$\varphi_g(t) = t^{-\beta} (\log t)_*^{\mu}, \quad
\varphi_{g_0}(t) = t^{r-\beta} (\log t)_*^{\mu}, \quad
\varphi_w(t) = t^{-\sigma} (\log t)_*^{\alpha}, \quad \varphi_v(t) =
t^{-\lambda} (\log t)_*^{\nu},
$$
$(\log t)_*=\max\{‐\log t, \, 1\}$; here we assume that
$$
\beta + \lambda = r +\frac dq -\frac{d}{p_1},
\quad \sigma -\lambda = \frac{d}{p_0} - \frac{d}{q}.
$$

As for the first example, we can check Assumptions \ref{supp1}‐‐\ref{supp6} with $s_*=\frac rd$, $\gamma_*=\gamma+1$, $\alpha_*=\alpha‐\nu$, $\mu_*=\mu+\nu$ (see also \cite{vas_inters}, proof of Theorem 2 in \S 4‐‐5). Hence Theorem \ref{main} holds with such $s_*$, $\gamma_*$, $\alpha_*$, $\mu_*$, $M:=\widehat{M}$. If one of the conditions from Remark \ref{rem1} holds, the same estimates are true for the set $M$ defined by (\ref{m_def}).

\vskip 0.3cm

In both examples there is another case when the widths of $\widehat{M}$ have the same orders as the widths of $M$. Let $\mu_*+\left(\frac{\gamma_*}{q}‐\frac{\gamma_*}{p_1}\right)_+<0$, $\mu_*+\alpha_*\le 0$, $\mu_*+\alpha_* + \frac{\gamma_*}{p_0} ‐\frac{\gamma_*}{p_1}\le 0$, $s_*+\frac{1}{\max\{p_0, \, q\}} ‐\frac{1}{p_1}>0$, and let $\lambda<\frac{d‐\theta}{q}$ (in the second example, $\theta:=0$). Then for the set $M$ the estimate from Theorem \ref{main} holds (here $j_0$ and $\theta_j$ are as in Notation \ref{not2}). It follows from the inclusions $\widehat{M} \subset M \subset W^r_{p_1, \, g}(\Omega)$, where
$$
W^r_{p_1, \, g}(\Omega) = \left\{f:\Omega \rightarrow \R:\; \left\| \frac{\nabla^r f}{g}\right\|_{L_{p_1}(\Omega)}\le 1\right\}.
$$

Indeed, we have
$$
d_n(\widehat{M}, \, L_{q,v}(\Omega)) \le d_n(M, \, L_{q,v}(\Omega)) \le d_n(W^r_{p_1, \, g}(\Omega), \, L_{q,v}(\Omega)).
$$
The left‐hand side can be estimated from below according to Theorem \ref{main}. The right‐hand side can be estimated from above by \cite[Theorem 1, cases 1, 3, 4]{vas_width_raspr}.

\vskip 0.3cm

Now we consider Example 3 from \cite{vas_inters}. The set $\widehat{M}$ is defined by (\ref{widehat_m}), where
$$
g(x) = (1 + |x|)^\beta, \quad g_0(x) = (1+|x|)^{\beta+r},
$$
$$
w(x) = (1+|x|)^\sigma, \quad v(x)=(1+|x|)^\lambda.
$$
As in the previous cases (see also \cite{vas_inters}, \S 4, 5, proof of Theorem 3), we check Assumptions \ref{supp1}‐‐\ref{supp6} with $s_*=\frac rd$, $\gamma_*=0$, $\mu_* = \beta+\lambda + r+\frac dq‐\frac{d}{p_1}$, $\alpha_*=\sigma‐ \lambda ‐\frac dq+\frac{d}{p_0}$. Hence the Kolmogorov widths of $\widehat{M}$ can be estimated according to Theorem \ref{main}; if one of the conditions of Remark \ref{rem1} holds, the same estimates are true for the set $M$ defined by (\ref{m_def}).

\begin{Biblio}
\bibitem{ait_kus1} M.S.~Aitenova, L.K.~Kusainova, ``On the asymptotics of the distribution of approximation
numbers of embeddings of weighted Sobolev classes. I'', {\it Mat.
Zh.} {\bf 2}:1 (2002), 3--9.

\bibitem{ait_kus2} M.S.~Aitenova, L.K.~Kusainova, ``On the asymptotics of the distribution of approximation
numbers of embeddings of weighted Sobolev classes. II'', {\it Mat.
Zh.} {\bf 2}:2 (2002), 7--14.

\bibitem{boy_1} I.V.~Boykov, ``Approximation of some classes
of functions by local splines'', {\it Comput. Math. Math. Phys.}
{\bf 38}:1 (1998), 21--29.

\bibitem{boy_ryaz} I.V.~Boykov, 	V.A. Ryazantsev, ``On the optimal approximation of geophysical fields'', {\it Numerical Analysis and Applications}, {\bf 14}:1 (2021), 13‐‐29. 

\bibitem{m_bricchi1} M. Bricchi, ``Existence and properties of
$h$-sets'', {\it Georgian Mathematical Journal}, {\bf 9}:1 (2002),
13--32.

\bibitem{galeev1} E.M.~Galeev, ``The Kolmogorov diameter of the intersection of classes of periodic
functions and of finite-dimensional sets'', {\it Math. Notes},
{\bf 29}:5 (1981), 382--388.

\bibitem{galeev4} E.M. Galeev,  ``Widths of functional classes and finite‐dimensional sets'', {\it Vladikavkaz. Mat. Zh.}, {\bf 13}:2 (2011), 3‐‐14.

\bibitem{garn_glus} A.Yu. Garnaev and E.D. Gluskin, ``On widths of the Euclidean ball'', {\it Dokl.Akad. Nauk SSSR}, {bf 277}:5 (1984), 1048--1052 [Sov. Math. Dokl. 30 (1984), 200--204]

\bibitem{bib_gluskin} E.D. Gluskin, ``Norms of random matrices and diameters
of finite-dimensional sets'', {\it Math. USSR-Sb.}, {\bf 48}:1
(1984), 173--182.

\bibitem{kashin_oct} B.S. Kashin, ``The diameters of octahedra'', {\it Usp. Mat. Nauk} {\bf 30}:4 (1975), 251‐‐252 (in Russian).

\bibitem{bib_kashin} B.S. Kashin, ``The widths of certain finite-dimensional
sets and classes of smooth functions'', {\it Math. USSR-Izv.},
{\bf 11}:2 (1977), 317--333.

\bibitem{k_p_s} A.N. Kolmogorov, A. A. Petrov, Yu. M. Smirnov, ``A formula of Gauss in the theory of the method of least squares'', {\it Izvestiya Akad. Nauk SSSR. Ser. Mat.} {\bf 11} (1947), 561‐‐566 (in Russian).

\bibitem{lo1} P.I. Lizorkin, M. Otelbaev, ``Imbedding theorems and compactness
for spaces of Sobolev type with weights'', {\it Math. USSR-Sb.}
{\bf 36}:3 (1980), 331--349.

\bibitem{lo2} P.I. Lizorkin, M. Otelbaev, ``Imbedding theorems and
compactness for spaces of Sobolev type with weights. II'', {\it
Math. USSR-Sb.} {\bf 40}:1 (1981), 51--77.

\bibitem{lo3} P.I. Lizorkin, M. Otelbaev, ``Estimates of approximate numbers
of the imbedding operators for spaces of Sobolev type with
weights'', {\it Proc. Steklov Inst. Math.}, {\bf 170} (1987),
245--266.

\bibitem{myn_otel} K.~Mynbaev, M.~Otelbaev, {\it Weighted function spaces and the
spectrum of differential operators.} Nauka, Moscow, 1988.

\bibitem{r_oinarov} R. Oinarov, ``On weighted norm inequalities with three
weights''. {\it J. London Math. Soc.} (2), {\bf 48} (1993),
103--116.

\bibitem{kniga_pinkusa} A. Pinkus, {\it $n$-widths
in approximation theory.} Berlin: Springer, 1985.

\bibitem{pietsch1} A. Pietsch, ``$s$-numbers of operators in Banach space'', {\it Studia Math.},
{\bf 51} (1974), 201--223.

\bibitem{resh1} Yu.G. Reshetnyak, ``Integral representations of
differentiable functions in domains with a nonsmooth boundary'',
{\it Sibirsk. Mat. Zh.}, {\bf 21}:6 (1980), 108--116 (in Russian).

\bibitem{resh2} Yu.G. Reshetnyak, ``A remark on integral representations
of differentiable functions of several variables'', {\it Sibirsk.
Mat. Zh.}, {\bf 25}:5 (1984), 198--200 (in Russian).

\bibitem{stech_poper} S. B. Stechkin, ``On the best approximations of given classes of functions by arbitrary polynomials'', {\it Uspekhi Mat. Nauk, 9:1(59)} (1954) 133‐‐134 (in Russian).

\bibitem{st_ush} V.D. Stepanov, E.P. Ushakova, ``On Integral Operators with Variable Limits of Integration'',
{\it Proc. Steklov Inst. Math.}, {\bf 232} (2001), 290--309.

\bibitem{stesin} M.I. Stesin, ``Aleksandrov diameters of finite-dimensional sets
and of classes of smooth functions'', {\it Dokl. Akad. Nauk SSSR},
{\bf 220}:6 (1975), 1278--1281 [Soviet Math. Dokl.].

\bibitem{itogi_nt} V.M. Tikhomirov, ``Theory of approximations''. In: {\it Current problems in
mathematics. Fundamental directions.} vol. 14. ({\it Itogi Nauki i
Tekhniki}) (Akad. Nauk SSSR, Vsesoyuz. Inst. Nauchn. i Tekhn.
Inform., Moscow, 1987), pp. 103--260 [Encycl. Math. Sci. vol. 14,
1990, pp. 93--243].

\bibitem{triebel} H. Triebel, {\it Interpolation theory. Function spaces. Differential
operators}. Mir, Moscow, 1980.

\bibitem{vas_john} A.A. Vasil'eva, ``Widths of weighted Sobolev classes on a John domain'',
{\it Proc. Steklov Inst. Math.}, {\bf 280} (2013), 91--119.

\bibitem{vas_width_raspr} A.A. Vasil'eva, ``Widths of function classes on sets with tree-like
structure'', {\it J. Appr. Theory}, {\bf 192} (2015), 19--59.

\bibitem{vas_fin_dim} A.A. Vasil'eva, ``Kolmogorov widths of the intersection of two finite-dimensional balls'', {\it Russ. Math.} {\bf 65}:7 (2021), 17‐‐23.

\bibitem{vas_inters} A.A. Vasil'eva, ``Kolmogorov widths of weighted Sobolev classes on a multi-dimensional domain with conditions on the derivatives of
order $r$ and zero'', {\it J. Appr. Theory}, {\bf 269} (2021), article 105602, 34 pp.

\end{Biblio}
\end{document}